\setlist[description]{style=multiline,topsep=4pt,align=parright}
\let\reftagform@=\tagform@
\def\tagform@#1{\maketag@@@{(\ignorespaces\textcolor{black}{#1}\unskip\@@italiccorr)}}
\newcommand{\iref}[1]{\textup{\reftagform@{\tcr{\ref{#1}}}}}
\titlespacing\section{0pt}{11pt plus 4pt minus 2pt}{6pt plus 2pt minus 2pt}
\titlespacing\subsection{0pt}{10pt plus 4pt minus 2pt}{4pt plus 2pt minus 2pt}
\titlespacing\subsubsection{0pt}{8pt plus 4pt minus 2pt}{4pt plus 2pt minus 2pt}
\titlespacing\paragraph{0pt}{6pt plus 4pt minus 2pt}{6pt plus 2pt minus 2pt}
\newcommand{\tnabla}{\widetilde{\nabla}}
\newcommand{\tnablasaga}{\widetilde{\nabla}^{\textnormal{\tiny SAGA}}}
\newcommand{\tnablasarah}{\widetilde{\nabla}^{\textnormal{\tiny SARAH}}}
\begin{document}

	\setlength{\abovedisplayskip}{4.5pt}
	\setlength{\belowdisplayskip}{4pt}

\title{A Stochastic Alternating Direction Method of Multipliers for Non-smooth and Non-convex Optimization}
\author{Fengmiao Bian\thanks{School of Mathematical Sciences, Shanghai Jiao Tong University, Shanghai China. E-mail: bianfm17@sjtu.edu.cn} \and
		Jingwei Liang\thanks{School of Mathematical Sciences, Queen Mary University of London, London UK. E-mail: jl993@cam.ac.uk.} \and
		Xiaoqun Zhang\thanks{School of Mathematical Sciences and Institute of Natural Sciences, Shanghai Jiao Tong University, Shanghai China. E-mail: xqzhang@sjtu.edu.cn.} 
		}
\date{}
\maketitle

\begin{abstract}
Alternating direction method of multipliers (ADMM) is a popular first-order method owing to its simplicity and efficiency. However, similar to other proximal splitting methods, the performance of ADMM degrades significantly when the scale of the optimization problems to solve becomes large. 
In this paper, we consider combining ADMM with a class of stochastic gradient with variance reduction for solving large-scale non-convex and non-smooth optimization problems. 
Global convergence of the generated sequence is established under the extra additional assumption that the object function satisfies Kurdyka-\L ojasiewicz (KL) property. 
Numerical experiments on graph-guided fused Lasso and computed tomography are presented to demonstrate the performance of the proposed methods.
\end{abstract}

\begin{keywords}
Non-convex optimization $\cdot$ 
stochastic ADMM $\cdot$ variance reduction stochastic gradient 
\end{keywords}

\begin{AMS}
{90C26 $\cdot$ 90C30 $\cdot$ 90C90 $\cdot$ 15A83 $\cdot$ 65K05}
\end{AMS}


\section{Introduction}

Driven by the problems arising from diverse fields including signal/image processing, compressed sensing, inverse problems, computer vision and many others, the past decades have witnessed a tremendous success of non-smooth optimization and first-order proximal splitting algorithms \cite{chambolle2016introduction}. 
Nowadays, with the advances of data acquisition tools and mathematical modeling, the problems to handle are becoming increasingly complex which imposes challenges, \eg large dimension and non-smoothness, to design efficient numerical algorithms. 
In the fields of machine learning and related areas, stochastic optimization methods are widely adopted due its simplicity and fast convergence. Over the past few years, in traditional areas such as imaging science,  stochastic methods are becoming popular due to the aforementioned challenges. In this paper, we follow this path and consider a stochastic version of the popular alternating direction method of multipliers (ADMM) \cite{GM, GM1}.


In this paper, we are interested in solving the following composite optimization problem
\begin{equation}\label{model}
\min_{x \in \mathbb{R}^d} H(x) + F(Ax) ,
\end{equation}
where $H = \frac{1}{n}\sum_{i=1}^n H_i(x) $ has finite sum structure, $A: \mathbb{R}^d \to \mathbb{R}^m$ is a linear mapping and $H, F$ are proper lower semi-continuous functions. Throughout this paper, no convexity is imposed to either $H$ or $F$. 
%
In practice, numerous problems can be formulated in to the form of \eqref{model}. 
For example, in computed tomography (CT), the reconstruction task can be modeled as
\begin{equation}\label{CT}
\min_{x \in \bbR^d} \sfrac{1}{n} \sum_{i=1}^n ( \mathcal{R}_i x - b_i )^2 + F(Ax),
\end{equation}
where $x$ is the image to be reconstructed, $n$ is the number of projection, $\mathcal{R}$ is Radon transform, $b_i$ is the $i_{th}$ observation. 
In order to ensure the quality of the reconstructed image, a large amount of  projection is required, which means a large value of $n$. 
%
%
Another example is classification/regression, where one needs to solve the following problem
\begin{equation}\nonumber
\min_{x\in\bbR^d} \sfrac{1}{n} \sum_{i=1}^n H(a_i^T x, y_i) + F(Ax),
\end{equation}
where for each $i=1,...,n$, $(a_i, y_i) \in \bbR^{n} \times \bbR$ is given training data and its corresponding label; $H$ is the loss function, such as square loss or logistic loss. In many cases, one needs to use a large scale of training data which results in huge value of $n$. 
In both cases, $F(Ax)$ is called regularization term, where $A$ is a properly chosen linear transformation such as discrete gradient operator or wavelet transform, and $F$ is a low-complexity promoting functions such as $L_0$ or $L_1$-norms for sparsity. We refer to \cite{CNZ,ZDL,WWT,KSX,BD} and the references therein for more detailed discussions. 


\subsection{Alternating direction method of multipliers}

In the literature, one popular method to solve \eqref{model} is alternating direction method of multipliers (ADMM). ADMM was developed around 1970s, where \cite{GM, GM1} shaped the original form of the algorithm, deeper theoretical understanding of ADMM can be found in \cite{eckstein1992douglas} and the nowadays popularity of ADMM partially owes to \cite{boyd2011distributed}. 
To apply ADMM to solve \eqref{model}, one needs to add an auxiliary variable $z$ which leads to the following constrained problem
\begin{equation}\label{constraint}
\min_{x \in \mathbb{R}^d, u \in \mathbb{R}^m} H(x) + F(z) \quad \textrm{such~that} \quad z = Ax .
\end{equation}
The augmented Lagrangian formulation associated to \eqref{constraint} reads
\[
\mathcal{L}_{\beta}(x, z, u) = F(z) + H(x) + \langle u, Ax - z \rangle + \sfrac{\beta}{2} \| Ax - z \|^2,
\]
where $u$ is the Lagrange multiplier and $\beta > 0$ is properly chosen \cite{GM,GM1}. 
In this paper, we adopt a linearized ADMM proposed in \cite{YY, ZBO, OCLP, BN,LSG}, which takes the following form of iteration
\begin{equation}\label{lADMM}
\begin{aligned}
z_{t+1} &\in  \arg\min_{z}   F(z) + \langle u_t, Ax_t - z \rangle + \sfrac{\beta}{2}\| Ax_t - z \|^2  ,\\
x_{t+1} &=  x_t - \sfrac{1}{\tau} \Pa{ \nabla  H(x_t) + A^{T} u_t + \beta A^{T} (Ax_t - z_{t+1}) } ,   \\
u_{t+1} &= u_{t} +\sigma \beta (Ax_{t+1} - z_{t+1}) ,
\end{aligned}
\end{equation}
where $\tau, \sigma > 0$ are step-sizes. The difference between \eqref{lADMM} and the standard ADMM lies in the update of $x_{t+1}$, for standard ADMM one needs to solve a proximal minimization of $H(x)$, while \eqref{lADMM} takes the gradient descent for $H$. 

Convergence guarantees of \eqref{lADMM} in the convex case can be found in \cite{YY, ZBO,OCLP}, while for the non-convex case results are available in \cite{BN, LSG}. 
For the case of $H$ being a finite sum, computing the full gradient $\nabla H$ can be very time consuming which damps the efficiency of the method. To circumvent such difficulty, one can replace $\nabla H$ with its stochastic approximation, which results in a stochastic version of \eqref{lADMM} and is the core of our contributions in this work.

\subsection{Contributions}

Motivated by the work of \cite{DTLDS}, in this paper we propose a stochastic version of ADMM \eqref{lADMM} for solving non-smooth and non-convex objective \eqref{model}. 
The key of our algorithm is replacing the full gradient computation $\nabla H$ in \eqref{lADMM} with its stochastic approximations $\tnabla H$, and the resulted algorithm is described below in Algorithm~\ref{alg:SADMM}.

\begin{center}
\begin{minipage}{0.95\linewidth}
	
\begin{algorithm}[H]
\caption{A Stochastic Alternating Direction Method of Multipliers (SADMM)}
\label{alg:SADMM}
\begin{algorithmic}
\STATE{{\bf{Step 0.}}  Input $T$ and $\beta,~\tau,~\sigma > 0$. Initialize $x_0$ and $u_0$.}

\STATE{{\bf{Step 1.}}  For $t=0, 1, \dots, T-1$ do
\begin{subequations}\label{alg1}
\begin{align}
z_{t+1} &\in  \arg\min_{z}   F(z) + \langle u_t, Ax_t - z \rangle + \sfrac{\beta}{2}\| Ax_t - z \|^2  , \label{sub1}\\
x_{t+1} &=  x_t - \sfrac{1}{\tau} \Pa{ \tnabla  H(x_t) + A^{T} u_t + \beta A^{T} (Ax_t - z_{t+1}) } ,  \label{sub2} \\
u_{t+1} &= u_{t} +\sigma \beta (Ax_{t+1} - z_{t+1}). \label{sub3}
\end{align}
\end{subequations}
}
\STATE{{\bf{Step 2.}}  Iterate $x$ and $z$ chosen uniformly random from $\{x_t, z_t\}_{t=1}^T$.}

\end{algorithmic}
\end{algorithm}

\end{minipage}
\end{center}

Various stochastic gradient approximations developed in the literature can be used by $\tnabla H$, for instance the simplest choice is the stochastic gradient \cite{RM}. Since $H$ has finite sum structure, variance reduced stochastic gradient approximations can also be applied, such as SAGA \cite{DBL} and SVRG \cite{JZ} which are unbiased, and SARAH \cite{NLST} which is biased. 
Overall, our contributions in this paper consist the following
\begin{itemize}
\item[1.] By combing ADMM with stochastic gradient approximations, we propose a stochastic ADMM algorithm for non-convex and non-smooth optimization. Our algorithm is very general in the sense that it can incorporate with most existing stochastic gradient approximation in the literature. 

\item[2.] When the stochastic gradient approximation $\tnabla H$ is variance reduced (see Definition \ref{var-redu}), a decent property is obtained for a specifically designed stability function which yields the convergence of the objective function. If moreover the objective function has \KL property (see Definition \ref{KL1}), convergence of the generated sequences is also established. 

%
%

\item[3.] Numerically,  we test performance of SADMM using several well studied stochastic gradient approximations (\eg SGD, SAGA, SVRG and SARAH) on problems including graph-guided fused Lasso, wavelet frame based 2D CT reconstruction and TV-L0 3D CT reconstruction. We also compare with some existing algorithms in the literature, and our result indicates SADMM achieves the best performance using SARAH gradient approximation.
\end{itemize}

\subsection{Related work}


Over the past years, extension of ADMM to the case of non-convex optimization and stochastic setting are widely studied. Along the direction of non-convex optimization, recently some theoretical foundations are established. 
For instance in \cite{LPADMM}, the authors studied the convergence of ADMM for a class of non-convex optimization problems. In \cite{WCX, WCX1}, convergence of a non-convex Bregman ADMM was studied. For non-convex separable objective functions with applications to consensus problem, the authors of \cite{HLR} studied the properties of ADMM. Other related work can be found in for instance \cite{WYZ,BCN} under different settings.  
It should be noted that all these works are studied under the deterministic setting, that is no randomness is involved in these results. 

For stochastic versions of ADMM, the first attempt can be found in \cite{WB}, where the authors proposed an online ADMM for large scale optimization. 
In recent years, the success of stochastic proximal gradient algorithms, such as SAG/SAGA \cite{SRB, DBJ}, SDCA \cite{SZ}, SVRG \cite{JZ, KLRT} and SARAH \cite{NLST}, greatly boost the studies of stochastic ADMM. 
For example, in \cite{ZK} the authors combined ADMM with SAG gradient approximation, which was further combined with Nesterov acceleration \cite{NY} in \cite{AS}. 
Stochastic ADMM with SDCA was studied in \cite{TS} for a wide range of regularized learning problems, and linear rate of convergence was obtained when the objective function obeys some strong convexity and smoothness property. 
The combinations of ADMM with SVRG can be found in \cite{ZK1,LSC} under different settings. 
We also refer to \cite{ZLZ} for related developments of stochastic ADMM.  
{In \cite{HC} the authors studied the combination of ADMM and three different gradient approximations: SGD, SVRG and SAGA,  for solving non-convex non-smooth  problems.  For each gradient approximation, the authors proved that $(x_{t^*}, z_{t^*})$ is an $\epsilon$-stationary point of the objective function, where $t^*= \arg\min_{2 \leq t \leq T+1} \theta_t$  with $\theta_t$ being a variable associated the gradient estimator, while the authors showed that the mini-batch SGD/SVRG/SAGA-ADMM  has a convergence rate of $\mathcal{O}(\sfrac{1}{T})$. Similar approach is used in \cite{HCH} for the convergence analysis of a faster non-convex stochastic ADMM, which using a stochastic path-integrated differential estimator.}


In comparison, our work is motivated by \cite{DTLDS} where the authors proposed a generic stochastic version proximal alternating linearized minimization (PALM) algorithm \cite{BST}, for which various variance-reduced gradient approximations are allowed. 
We extend the result of \cite{DTLDS} to the case of ADMM, hence our proposed SADMM is very general in the sense that Algorithm \ref{alg:SADMM} works with any stochastic gradient approximations which are variance reduced.

\paragraph{Paper organisation} 
The rest of the paper is organized as following. In Section \ref{sec:notation}, we collect some useful preliminary results which are essential to our analysis. The main theoretical analysis of our proposed algorithm can be found in Section \ref{sec:convergence}, followed by numerical result in Section \ref{sec:examples}. Proofs of main theorems are differed in the appendix.

\section{Preliminaries}
\label{sec:notation}

In this section, we give some notation  and definitions used in our paper. For theoretical analysis, we first recall the definition of variance reduction gradient estimators which is taken from \cite{DTLDS}.

\begin{definition}[{\cite[Definition 2.1]{DTLDS}}]\label{var-redu}
A gradient estimator $\tnabla $ is called variance-reduced if there exist constants $V_1,~V_2,~V_{\Upsilon} \geq 0$ and $\rho \in (0, 1]$ such that
\begin{itemize}
\item[1.] $(${\bf MSE Bound}$)$ There exists a sequence having random variables $\{ \Upsilon_t \}_{t \geq 1}$ of the form $\Upsilon_t = \sum_{i=1}^s \| v_t^i \|^2$ for some random vectors $v_t^i$ such that
\begin{equation}\nonumber
\mathbb{E}_t \| \tnabla  H(x_t) - \nabla H(x_t) \|^2 \leq \Upsilon_t + V_1 (\mathbb{E}_t \| x_{t+1} - x_t \|^2 + \| x_t - x_{t-1} \|^2 ),
\end{equation}
and, with $\Gamma_t = \sum_{i=1}^s \| v_t^i \|,$
\begin{equation}\label{var-mse1}
\mathbb{E}_t \| \tnabla  H(x_t) - \nabla H(x_t) \| \leq \Gamma_t + V_2 (\mathbb{E}_t \| x_{t+1} - x_t \| + \| x_t - x_{t-1} \|).
\end{equation}
\item[2.] $(${\bf Geometric Decay}$)$ The sequence $\{ \Upsilon_t \}_{t\geq 1}$ decays geometrically:
\begin{equation}\label{var-geo}
\mathbb{E}_t \Upsilon_{t+1} \leq (1 - \rho) \Upsilon_t + V_{\Upsilon} \Pa{ \mathbb{E}_t \| x_{t+1} - x_t \|^2 + \| x_t - x_{t-1} \|^2 }.
\end{equation}
\item[3.] $(${\bf Convergence of Estimator}$)$ For all sequences $\{ x_t \}_{t = 0}^{\infty}$ satisfying $\lim_{t \to \infty} \mathbb{E} \| x_t - x_{t-1} \|^2$ $ \to 0,$ it follows that $\mathbb{E} \Upsilon_t \to 0$ and $\mathbb{E} \Gamma_t \to 0$.
\end{itemize}
\end{definition}

As remarked in \cite{DTLDS}, most existing variance reduced gradient estimators in the literature satisfy the above definition, in this work we mainly consider SAGA \cite{DBL} and SARAH \cite{NLST} which are widely used. 
For these two estimators, we provide their definitions below and refer to Appendix \ref{app1} for their properties.

\begin{definition}[SAGA \cite{DBL}]\label{SAGA}
The SAGA gradient approximation $\tnablasaga H(x)$ is defined as follows:
\begin{equation}\nonumber
\tnablasaga H(x_t) 
= \sfrac{1}{b} \bPa{ \sum\nolimits_{j \in J_t} \nabla H_j (x_t) - \nabla H_j (\varphi_t^j) } + \sfrac{1}{n} \sum\nolimits_{i = 1}^n \nabla H_i (\varphi_t^i),
\end{equation}
where $J_t$ is mini-batches containing $b$ indices. The variables $\varphi_t^i$ follow the update rules $\varphi_{t+1}^i = x_t$ if $i \in J_t$ and $\varphi_{t+1}^i = \varphi_t^i$ otherwise. 
\end{definition}

\begin{definition}[SARAH \cite{NLST}]\label{sarah}
The SARAH estimator reads for $t=0$ as 
\begin{equation}\nonumber
\tnablasarah H(x_0) = \nabla H(x_0).
\end{equation}
For $t = 1, 2, \dots $, define random variables $p_t \in \{ 0, 1 \}$ with $P(p_t = 0) = \frac{1}{p}$ and $P(p_t = 1) = 1 - \frac{1}{p}$, where $p \in (0, \infty)$ is a fixed chosen parameter. Let $J_t$ be a random subset uniformly drawn from $\{ 1, \dots, n \}$ of fixed batch size $b$. Then for $t = 1, 2, \dots$ the SARAH gradient approximation reads as 
\begin{equation}\nonumber
\tnablasarah H(x_t) 
= 
\left\{
\begin{aligned}
\nabla H(x_t) &: \textrm{if}~p_t = 0,\\
\sfrac{1}{b} \Pa{ \ssum_{j \in J_t} \nabla H_j (x_t) - \nabla H_j (x_{t-1}) } + \tnablasarah H(x_{t-1}) &: \textrm{if}~p_t = 1.
\end{aligned}
\right.
\end{equation}
\end{definition}


Below we include supermartingale convergence result, and refer to \cite{D} and \cite{RS} for more details.

\begin{lemma}\label{lemma1}
Suppose $x_1, \dots, x_t$ are independent random variables satisfying $\mathbb{E}_t x_i = 0$ for all $i$. Then
\begin{equation}\nonumber
\mathbb{E}_t \| x_1 + \cdots + x_t \|^2 = \mathbb{E}_t \big[ \| x_1 \|^2 + \cdots + \| x_t \|^2 \big].
\end{equation}
\end{lemma}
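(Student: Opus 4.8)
The plan is to expand the squared norm and use independence to annihilate the cross terms. First I would invoke bilinearity of the inner product on $\mathbb{R}^m$ to write
\[
\Big\| \sum_{i=1}^t x_i \Big\|^2 = \sum_{i=1}^t \| x_i \|^2 + 2 \sum_{1 \le i < j \le t} \langle x_i, x_j \rangle .
\]
Applying $\mathbb{E}_t$ and using linearity of expectation, the identity reduces to the single claim that $\mathbb{E}_t \langle x_i, x_j \rangle = 0$ whenever $i \ne j$.

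To establish this, I would pass to coordinates: writing $\langle x_i, x_j \rangle = \sum_{k=1}^m (x_i)_k (x_j)_k$ and using that $x_i$ and $x_j$ are independent (hence so are the scalar components $(x_i)_k$ and $(x_j)_k$), the expectation factors, $\mathbb{E}_t\big[(x_i)_k (x_j)_k\big] = \mathbb{E}_t[(x_i)_k]\,\mathbb{E}_t[(x_j)_k]$. Since the hypothesis $\mathbb{E}_t x_i = 0$ forces every coordinate to have zero mean, each of these products vanishes, so $\mathbb{E}_t \langle x_i, x_j \rangle = 0$. Substituting back leaves only the diagonal terms, which is exactly the asserted equality.

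The argument is entirely routine; the only point deserving care is bookkeeping about the operator $\mathbb{E}_t$, namely that the independence and the zero-mean property are to be understood under the same conditional measure, so that the factorization $\mathbb{E}_t[XY] = \mathbb{E}_t[X]\,\mathbb{E}_t[Y]$ for independent $X,Y$ is legitimate. There is no genuine obstacle. If one prefers to avoid coordinates, the same conclusion follows by induction on $t$: at the inductive step one expands $\|(x_1+\cdots+x_{t-1}) + x_t\|^2$, and the lone new cross term $2\langle x_1+\cdots+x_{t-1},\, x_t\rangle$ has vanishing conditional expectation because $x_t$ is independent of $x_1+\cdots+x_{t-1}$ and has zero mean.
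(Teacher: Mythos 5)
Your proof is correct: expanding the squared norm and killing the cross terms $\mathbb{E}_t\langle x_i,x_j\rangle$ via independence and the zero-mean hypothesis is exactly the standard argument, and your remark that the independence and centering must hold under the same conditional measure $\mathbb{E}_t$ is the only point of care. Note that the paper itself states this lemma without proof (referring to the cited references), so there is no paper proof to compare against; your argument, in either its coordinate form or the inductive variant, fills that gap adequately.
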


\begin{lemma}[Supermartingale Convergence Theorem]\label{super}
Let $\mathbb{E}_t$ denote the expectation conditional on the first $t$ iterations of Algorithm \ref{alg:SADMM}. Let $\{ X_t \}_{t=0}^{\infty}$ and $\{ Y_t \}_{t=0}^{\infty}$ be sequences of bounded non-negative random variables such that $X_t$ and $Y_t$ depend only on the first $t$ iterations of Algorithm \ref{alg:SADMM}. If 
\begin{equation}\nonumber
\mathbb{E}_t X_{t+1} + Y_t \leq X_t,
\end{equation}
then $\sum_{t=0}^{\infty} Y_t< \infty~a.s.$ and $X_t$ converges $a.s.$.
\end{lemma}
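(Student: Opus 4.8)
The plan is to reduce Lemma~\ref{super} to Doob's classical non-negative supermartingale convergence theorem by passing to a suitably shifted process. Let $\mathcal{F}_t$ denote the $\sigma$-algebra generated by the first $t$ iterations of Algorithm~\ref{alg:SADMM}, so that $\mathbb{E}_t[\,\cdot\,] = \mathbb{E}[\,\cdot\mid\mathcal{F}_t]$ and, by hypothesis, $X_t$ and $Y_t$ are non-negative, $\mathcal{F}_t$-measurable, and integrable. The inequality $\mathbb{E}_t X_{t+1} + Y_t \le X_t$ is not a supermartingale inequality on its own because of the $Y_t$ term, so the first move is to absorb the running sum of the $Y_t$'s into the process.

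First I would take full expectations in $\mathbb{E}_t X_{t+1} + Y_t \le X_t$ to get $\mathbb{E}X_{t+1} + \mathbb{E}Y_t \le \mathbb{E}X_t$. This shows $\{\mathbb{E}X_t\}$ is non-increasing and non-negative, hence convergent, and telescoping yields $\sum_{t=0}^{\infty}\mathbb{E}Y_t \le \mathbb{E}X_0 < \infty$. By the monotone convergence theorem (Tonelli), $\mathbb{E}\big[\sum_{t=0}^{\infty}Y_t\big] = \sum_{t=0}^{\infty}\mathbb{E}Y_t < \infty$, so $\sum_{t=0}^{\infty}Y_t < \infty$ almost surely. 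This already gives the first assertion of the lemma.

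Next I would define $W_t := X_t + \sum_{s=0}^{t-1}Y_s$ (with the empty sum equal to $0$ for $t=0$). Each $W_t$ is non-negative, $\mathcal{F}_t$-measurable, and integrable since $\mathbb{E}W_t = \mathbb{E}X_t + \sum_{s<t}\mathbb{E}Y_s \le 2\,\mathbb{E}X_0 < \infty$. Using $\mathcal{F}_t$-measurability of $Y_0,\dots,Y_t$ and the hypothesis,
\[
\mathbb{E}_t W_{t+1} = \mathbb{E}_t X_{t+1} + \ssum_{s=0}^{t} Y_s \le \Pa{X_t - Y_t} + \ssum_{s=0}^{t} Y_s = X_t + \ssum_{s=0}^{t-1} Y_s = W_t,
\]
so $\{W_t\}_{t\ge0}$ is a non-negative supermartingale with respect to $\{\mathcal{F}_t\}$. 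Doob's supermartingale convergence theorem (see \cite{D,RS}) then gives that $W_t$ converges almost surely to a finite limit $W_\infty$.

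Finally I would conclude by writing $X_t = W_t - \sum_{s=0}^{t-1}Y_s$: the first term converges a.s.\ by the previous step, and the second converges a.s.\ because $\sum_{s=0}^{\infty}Y_s < \infty$ a.s.\ as established above, so $X_t$ converges almost surely. The only points requiring care are the integrability/measurability bookkeeping needed to legitimately invoke Doob's theorem and Tonelli — all of which follow immediately from the stated boundedness and adaptedness hypotheses — and the observation that the correct object to feed into the supermartingale theorem is the shifted process $W_t$ rather than $X_t$ itself; I do not anticipate a genuine obstacle beyond this.
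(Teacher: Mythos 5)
Your argument is correct, and it is worth noting that the paper itself offers no proof of this lemma: it is simply quoted as a known result with pointers to \cite{D} and \cite{RS} (it is a special case of the Robbins--Siegmund almost-supermartingale theorem). Your route --- first telescoping the expected inequality to get $\sum_t \mathbb{E} Y_t \le \mathbb{E} X_0 < \infty$ and hence $\sum_t Y_t < \infty$ a.s.\ by Tonelli, then absorbing the drift by passing to the shifted process $W_t = X_t + \sum_{s=0}^{t-1} Y_s$, verifying $\mathbb{E}_t W_{t+1} \le W_t$, and invoking Doob's non-negative supermartingale convergence theorem before writing $X_t = W_t - \sum_{s<t} Y_s$ --- is exactly the standard derivation of this special case, and every step (measurability, integrability, the supermartingale inequality) is justified under the stated hypotheses; in fact boundedness of $X_t$, $Y_t$ is more than you need, since integrability of $X_0$ suffices once non-negativity and adaptedness are assumed. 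What your self-contained proof buys over the paper's bare citation is transparency about which hypotheses are actually used; what the cited Robbins--Siegmund result buys in exchange is extra generality (it tolerates an inequality of the form $\mathbb{E}_t X_{t+1} \le (1+\alpha_t) X_t - Y_t + \beta_t$ with summable $\alpha_t,\beta_t$), which is not needed for the way Lemma~\ref{super} is applied in Lemma~\ref{lemma3} of this paper.
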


As we are considering non-convex problem, to deliver convergence analysis, we need Kurdyka-\L ojasiewicz (KL) property which is nowadays widely used in non-convex optimization. The definition of KL inequality is provided below and we refer to \cite{ABRS, ABS, BDL} and the references therein for more detailed accountant. 
For $\epsilon_1, \epsilon_2$ satisfying $\ninf < \epsilon_1 < \epsilon_2 < \pinf$, define the set $[ \epsilon_1< F < \epsilon_2 ] \eqdef \ba{ x\in \bbR^{m_1} : \epsilon_1 < F(x) < \epsilon_2}$.

\begin{definition}[KL inequality]\label{KL1}
A function $F: \mathbb{R}^{n} \to \mathbb{R} \cup \{+\infty\}$ has the Kurdyka-\L ojasiewicz property at $x^{*} \in$ dom $\partial F$ if there exist $\eta \in (0, \infty]$, a neighborhood $U$ of $x^{*}$, and a continuous concave function $\varphi : [0, \eta) \to \mathbb{R}_{+}$ such that:

\begin{itemize}
\item [(i)]  $\varphi(0) = 0,~\varphi \in C^{1}((0, \eta))$, and  $\varphi^{'}(s) > 0$ for all $s \in (0, \eta)$;

\item[(ii)] for all $x \in U \cap [F(x^{*}) < F < F(x^{*})+ \eta]$ the Kurdyka-\L ojasiewicz inequality holds, i.e.,
$$ 
\varphi^{'}(F(x) - F(x^{*}))\dist(0, \partial F(x)) \geq 1.
$$
\end{itemize}
If $F$ satisfies the Kurdyka-\L ojasiewicz property  at each point of dom $\partial F$, then it is called a KL function.
\end{definition}

Roughly speaking, KL functions become sharp up to reparameterization via $\varphi$, called a desingularizing function for $R$. Typical KL functions are the class of semi-algebraic functions, see \cite{BDL, BDLM}. For instance, the $\ell_0$ pseudo-norm and the rank function are KL.

\section{Convergence analysis}
\label{sec:convergence}


In this section, we provide convergence analysis for Algorithm \ref{alg:SADMM} for solving \eqref{model}. To this end, we need some basic assumptions which are listed below.

\begin{assumption}\label{assum}
For problem \eqref{model}, we suppose that
\begin{enumerate}[leftmargin=4em, label= {\textrm{{\bf A.\arabic{*})}}}, ref= \textrm{\bf A.\arabic{*}}]
\item Functions $F :  \bbR^m \to \bbR\cap\{+\infty\}$ and $H: \bbR^d \to \bbR$ are proper, lower semi-continuous and bounded from below.
\item The linear operator $A$ is surjective.
\item For each $i = 1, 2, \cdots, n$, $H_i$ is smooth differentiable and there exists an $L > 0$ such that 
$$
\| \nabla H_i(x) - \nabla H_i(y) \| \leq L \| x - y\|, ~~~~~\forall~x,~y \in \mathbb{R}^d .
$$
\end{enumerate}
\end{assumption}

For convergence analysis of non-convex optimization problem and algorithms, the key element is finding a stability function for which decent property can be obtained. For our specific case, we find the following stability function
\begin{equation}\label{Psi}
\begin{aligned}
\Psi_{t} 
&=  L_{\beta}(x_t,z_t,u_t) + C_0 \| A^{T} (u_t - u_{t-1}) + \sigma B (x_t - x_{t-1})\|^2 + \sfrac{1}{\rho}\Pa{ \sfrac{32}{\sigma\beta\lambda_{m}} + \sfrac{\eta}{2} } \Upsilon_{t}\\
&\qquad +  \sfrac{16}{\sigma\beta\lambda_{m}}  \| \tnabla H(x_{t-1}) - \nabla H(x_{t-1}) \|^2  + C_3 \| x_{t} - x_{t-1} \|^2,
\end{aligned}
\end{equation}
where $\eta, C_2>0$ are some positive constants, and  
$$
\lambda_m = \lambda_{min}(AA^T), ~~~~~B \eqdef \tau Id - \beta A^TA , 
$$
$$
C_0 \eqdef \sfrac{4(1-\sigma)}{\sigma^2 \beta \lambda_{m}} \geq 0 , \enskip
 C_1\eqdef \sfrac{8(\sigma\tau + L)^2}{\sigma\beta\lambda_{m}} > 0 
\enskip
\textrm{and}
\enskip
C_3 = \Pa{ \sfrac{32}{\sigma\beta\lambda_{m}} + \sfrac{\eta}{2} } (V_1 + \sfrac{V_{\Upsilon}}{\rho}) + C_2 + C_1.
$$
We have the following descent property for $\Psi_{t}$ for sequences generated by Algorithm \ref{alg:SADMM}.

\begin{theorem}\label{decrease}
For problem \eqref{model} and Algorithm \ref{alg:SADMM}, suppose that $2\tau \geq \beta \|A\|^2$ and Assumption \ref{assum} holds.  Let  $\{( x_t, z_t, u_t) \}_{t \geq 0}$ be a sequence generated by Algorithm \ref{alg:SADMM} using variance-reduced gradient estimators. 
Then for any $t \geq 1$ there holds
\begin{equation}\label{dec-relation}
\mathbb{E}_t [ \Psi_{t+1} + \tilde{\eta}  \| x_{t+1} - x_t \|^2 + C_2\| x_t - x_{t-1} \|^2 + \sfrac{1}{\sigma\beta} \| u_{t+1} - u_t \|^2 ] \leq \Psi_t,
\end{equation}
where $\Psi_{t}$ is given in \eqref{Psi}, $C_2>0$ and  
$$
\tilde{\eta} =\tau - \sfrac{L + \beta \| A \|^2}{2} - \sfrac{4\sigma\tau^2}{\beta\lambda_{m}} - \sfrac{8(\sigma \tau + L)^2}{\sigma\beta\lambda_{m}} -  \sfrac{1}{2\eta} - \Pa{ \sfrac{64}{\sigma\beta\lambda_{m}} + \eta } (V_1 + \sfrac{V_{\Upsilon}}{\rho}) - C_2. 
$$
Moreover, if $\tilde{\eta}  >0$, then we have the following finite summand in expectation
\begin{equation}\nonumber
\sum_{t=0}^{\infty} \mathbb{E} [\| x_{t+1} - x_t \|^2 + \| u_{t+1} - u_t \|^2 + \| z_{t+1} - z_t \|^2] < +\infty.
\end{equation}
\end{theorem}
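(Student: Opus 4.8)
The plan is to run a Lyapunov (sufficient-decrease) argument: first bound the change of the augmented Lagrangian $\mathcal{L}_\beta$ along one iteration of Algorithm~\ref{alg:SADMM}, then correct the ``bad'' dual-ascent term, and finally fold in the variance-reduction inequalities of Definition~\ref{var-redu} so that the corrected quantity is precisely the stability function $\Psi_t$ in \eqref{Psi}. I would begin with a three-point estimate of $\mathcal{L}_\beta$. Since $z_{t+1}$ minimizes the $z$-subproblem \eqref{sub1}, one has $\mathcal{L}_\beta(x_t,z_{t+1},u_t)\le\mathcal{L}_\beta(x_t,z_t,u_t)$. For the $x$-step, as each $H_i$ (hence $H$) is $L$-smooth by Assumption~\ref{assum}, the descent lemma combined with the update \eqref{sub2}, namely $\tau(x_t-x_{t+1})=\tnabla H(x_t)+A^Tu_t+\beta A^T(Ax_t-z_{t+1})$, yields
\[
\mathcal{L}_\beta(x_{t+1},z_{t+1},u_t)-\mathcal{L}_\beta(x_t,z_{t+1},u_t)\le -\Pa{\tau-\frac{L+\beta\|A\|^2}{2}}\|x_{t+1}-x_t\|^2+\langle \nabla H(x_t)-\tnabla H(x_t),\ x_{t+1}-x_t\rangle,
\]
and the stochastic cross term is split by Young's inequality with parameter $\eta$ into $\frac{1}{2\eta}\|x_{t+1}-x_t\|^2+\frac{\eta}{2}\|\tnabla H(x_t)-\nabla H(x_t)\|^2$. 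Finally the $u$-step \eqref{sub3} contributes $\langle u_{t+1}-u_t,\ Ax_{t+1}-z_{t+1}\rangle=\frac{1}{\sigma\beta}\|u_{t+1}-u_t\|^2$, the usual dual ascent.

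The heart of the proof is controlling this dual-ascent term. Differencing \eqref{sub2} at steps $t$ and $t-1$, eliminating $z$ through $Ax_t-z_{t+1}=-A(x_{t+1}-x_t)+\frac{1}{\sigma\beta}(u_{t+1}-u_t)$, and using $B=\tau\,Id-\beta A^TA$, one obtains the identity
\[
A^T(u_{t+1}-u_t)+\sigma B(x_{t+1}-x_t)=(1-\sigma)\,A^T(u_t-u_{t-1})+\sigma B(x_t-x_{t-1})-\sigma\Pa{\tnabla H(x_t)-\tnabla H(x_{t-1})}.
\]
Since $A$ is surjective, $\|u_{t+1}-u_t\|^2\le\frac{1}{\lambda_m}\|A^T(u_{t+1}-u_t)\|^2$, so the nonnegative quantity $\frac{2}{\sigma\beta\lambda_m}\|A^T(u_{t+1}-u_t)\|^2-\frac{2}{\sigma\beta}\|u_{t+1}-u_t\|^2$ may be added to the Lagrangian estimate; this turns $+\frac{1}{\sigma\beta}\|u_{t+1}-u_t\|^2$ into $-\frac{1}{\sigma\beta}\|u_{t+1}-u_t\|^2$ (i.e., onto the left-hand side of \eqref{dec-relation}) plus a multiple of $\|A^T(u_{t+1}-u_t)\|^2$, which I expand via the identity above. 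The auxiliary vector $A^T(u_t-u_{t-1})+\sigma B(x_t-x_{t-1})$ carried in $\Psi_t$ then obeys a recursion with contraction factor $1-\sigma$ (whence $\sigma\le1$, so that $C_0\ge0$), while $\|\tnabla H(x_t)-\tnabla H(x_{t-1})\|^2$ is bounded, using $L$-smoothness, by $3\|\tnabla H(x_t)-\nabla H(x_t)\|^2+3L^2\|x_t-x_{t-1}\|^2+3\|\tnabla H(x_{t-1})-\nabla H(x_{t-1})\|^2$; the first term will be handled by the MSE bound, and the last is precisely the term carried along in $\Psi_t$.

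Taking the conditional expectation $\mathbb{E}_t$ and invoking Definition~\ref{var-redu}, the MSE bound replaces $\mathbb{E}_t\|\tnabla H(x_t)-\nabla H(x_t)\|^2$ by $\Upsilon_t+V_1(\mathbb{E}_t\|x_{t+1}-x_t\|^2+\|x_t-x_{t-1}\|^2)$, while the geometric decay \eqref{var-geo} makes the term $\frac{1}{\rho}\Pa{\frac{32}{\sigma\beta\lambda_m}+\frac{\eta}{2}}\Upsilon_t$ of $\Psi$ strictly decrease at rate $\rho$, spilling over only controlled multiples of $\mathbb{E}_t\|x_{t+1}-x_t\|^2$ and $\|x_t-x_{t-1}\|^2$. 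Collecting all contributions to $\mathbb{E}_t\|x_{t+1}-x_t\|^2$, $\|x_t-x_{t-1}\|^2$, $\Upsilon_t$, $\|\tnabla H(x_{t-1})-\nabla H(x_{t-1})\|^2$, $\|u_{t+1}-u_t\|^2$ and $\|A^T(u_t-u_{t-1})+\sigma B(x_t-x_{t-1})\|^2$, and taking $C_0,C_1,C_3$ as in the definition of $\Psi_t$ (with $C_2>0$ and $\eta>0$ as free parameters), everything assembles into \eqref{dec-relation}, the leftover coefficient of $\mathbb{E}_t\|x_{t+1}-x_t\|^2$ being exactly $\tilde\eta$; the hypothesis $2\tau\ge\beta\|A\|^2$ is what ensures $\|B\|\le\tau$, used to estimate all the $\|B(\cdot)\|^2$ terms (this is the source of the $\tau^2$ in $\tilde\eta$).

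For the summability statement, I would first show that $\Psi_t$ is bounded from below: $F$ and $H$ are bounded below by Assumption~\ref{assum}, and since $Ax_t-z_t=\frac{1}{\sigma\beta}(u_t-u_{t-1})$ and $A^Tu_t$ can be expressed through \eqref{sub2} (again using surjectivity of $A$), the coupling terms of $\mathcal{L}_\beta$ along the iterates are controlled, so $\inf_t\Psi_t>-\infty$ almost surely. Then, assuming $\tilde\eta>0$ and $C_2>0$, all bracketed terms in \eqref{dec-relation} are nonnegative, so Lemma~\ref{super} — or, equivalently, taking total expectations and telescoping the nonincreasing sequence $\mathbb{E}\,\Psi_t$ — gives $\sum_t\mathbb{E}[\|x_{t+1}-x_t\|^2+\|u_{t+1}-u_t\|^2]<\infty$; finally, the relation $z_{t+1}-z_t=A(x_{t+1}-x_t)-\frac{1}{\sigma\beta}(u_{t+1}-u_t)+\frac{1}{\sigma\beta}(u_t-u_{t-1})$ shows $\sum_t\mathbb{E}\|z_{t+1}-z_t\|^2<\infty$ as well. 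The main obstacle is the second paragraph: in the non-convex stochastic regime $\|u_{t+1}-u_t\|^2$ must be absorbed simultaneously against the primal decrease, the geometric decay of $\Upsilon_t$, and the contraction of $A^T(u_t-u_{t-1})+\sigma B(x_t-x_{t-1})$, and matching all constants so as to recover exactly \eqref{Psi} and the stated $\tilde\eta$ is delicate; a secondary point is the lower bound on $\Psi_t$ needed for the telescoping.
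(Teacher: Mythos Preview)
Your proposal is correct and follows essentially the same route as the paper: the three-step estimate of $\mathcal{L}_\beta$, the difference identity $A^T(u_{t+1}-u_t)+\sigma B(x_{t+1}-x_t)=(1-\sigma)A^T(u_t-u_{t-1})+\sigma B(x_t-x_{t-1})-\sigma(\tnabla H(x_t)-\tnabla H(x_{t-1}))$ obtained from \eqref{sub2}--\eqref{sub3}, the $(1-\sigma)$-contraction of the auxiliary vector, and the use of the MSE bound plus geometric decay from Definition~\ref{var-redu} are exactly the paper's steps. One bookkeeping difference: the paper groups $\sigma B(x_t-x_{t-1})$ together with $\nabla H(x_{t-1})-\nabla H(x_t)$ (this is where $\|B\|\le\tau$ is used and where the $(\sigma\tau+L)^2$ in $C_1$ and in $\tilde\eta$ comes from), rather than using your three-term split of $\|\tnabla H(x_t)-\tnabla H(x_{t-1})\|^2$; both lead to the same structure but the paper's grouping is what produces the stated constants.

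The one place where your sketch does not match and would not work as written is the lower bound on $\Psi_t$. Expressing $A^Tu_t$ through \eqref{sub2} does not by itself control $\langle u_t,Ax_t-z_t\rangle$, since nothing yet prevents $\|u_t\|$ from growing. The paper's argument (Lemma~\ref{phi-bound}) instead writes $\langle u_t,Ax_t-z_t\rangle=\tfrac{1}{\sigma\beta}\langle u_t,u_t-u_{t-1}\rangle=\tfrac{1}{2\sigma\beta}\bigl(\|u_t\|^2-\|u_{t-1}\|^2+\|u_t-u_{t-1}\|^2\bigr)$, telescopes the sum $\sum_{t\le T}(\Psi_t-\underline{\Phi})$, and then combines this with the \emph{already proved} decrease \eqref{dec-relation} in a contradiction: if $\Psi_{t_0}<\underline{\Phi}$ for some $t_0$ then monotonicity forces $\sum_t\mathbb{E}(\Psi_t-\underline{\Phi})\to-\infty$, contradicting the telescoped lower bound $-\tfrac{1}{2\sigma\beta}\|u_0\|^2$. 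Once this is in place, your telescoping/supermartingale argument for the summability of $\|x_{t+1}-x_t\|^2$, $\|u_{t+1}-u_t\|^2$ and then $\|z_{t+1}-z_t\|^2$ is exactly what the paper does.
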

{{
\begin{proof}
Below we provide a sketch of the proof while details can be found in Appendix \ref{app1}.

{\bf Firstly}, according to the optimization condition of each subproblem in Algorithm \ref{alg:SADMM}, we get the following basic relation (see also \eqref{ad1}),
\begin{equation}\nonumber
\begin{aligned}
&\mathbb{E}_t \big[ \Phi_{t+1} + \sfrac{16}{\sigma\beta\lambda_{m}}  \| \tnabla  H(x_{t}) - \nabla H(x_{t}) \|^2+ ( \tau - \sfrac{L + \beta \| A \|^2}{2} - \sfrac{4\sigma\tau^2}{\beta\lambda_{m}} - \sfrac{8(\sigma \tau + L)^2}{\sigma\beta\lambda_{m}})\| x_{t+1} - x_t \|^2 + \sfrac{1}{\sigma\beta} \| u_{t+1} - u_t \|^2 \big]\\
&\leq \Phi_t +  \sfrac{16}{\sigma\beta\lambda_{m}}  \| \tnabla  H(x_{t-1}) - \nabla H(x_{t-1}) \|^2 + ( \sfrac{32}{\sigma\beta\lambda_{m}} + \sfrac{\eta}{2} ) \mathbb{E}_t \| \tnabla  H(x_t) - \nabla H(x_t) \|^2 + \sfrac{1}{2\eta} \mathbb{E}_t \| x_{t+1} - x_t \|^2.
\end{aligned}
\end{equation}
We can see that the above inequality contains $\mathbb{E}_t \| \tilde{\nabla} H(x_t) - \nabla H(x_t) \|^2$, therefore it is necessary to estimate this term.\\
{\bf Secondly}, we use the MSE bound of $\mathbb{E}_t \| \tilde{\nabla} H(x_t) - \nabla H(x_t) \|^2$ and the geometric decreasing property of $\Upsilon_t$ in Definition \ref{var-redu}, then we get the descent property for  stability function $\Psi_t$ (see also \eqref{eq31}),
\begin{equation}\nonumber
\mathbb{E}_t [ \Psi_{t+1} + \tilde{\eta}  \| x_{t+1} - x_t \|^2 + \sfrac{1}{\sigma\beta} \| u_{t+1} - u_t \|^2  + C_2 \| x_t - x_{t-1} \|^2 ] \leq \Psi_t.
\end{equation}
{\bf Finally}, applying the full expectation operator to \eqref{eq31} and combining the lower boundedness of $\{ \Psi_t \}_{t \geq 1}$ (see Lemma \ref{phi-bound}), we get the conclusion.
\end{proof}
}}

\begin{remark}[Positivity of $\tilde{\eta}$]\label{para}
Here we provide a short discussion on the positivity of $\tilde{\eta}$. Apparently, if $\tilde{\eta} > 0$, then
\begin{equation}\label{eqe13}
-2\tilde{\eta} = \sfrac{24\sigma\tau^2}{\beta\lambda_{m}} - 2\Pa{ 1 - \sfrac{16L}{\beta\lambda_{m}} } \tau + \sfrac{16L^2}{\sigma\beta\lambda_{m}} + L + \beta\|A\|^2 + \sfrac{1}{\eta} + \Pa{ \sfrac{128}{\sigma\beta\lambda_{m}} + 2\eta}(V_1 + \sfrac{V_{\Upsilon}}{\rho}) + 2C_2< 0 ,
\end{equation}
where the RHS of the equal sign is a quadratic function of $\tau$. 
The reduced discriminant of the quadratic function of $\tau$ then reads  
\begin{equation}\label{eqe14}
\Delta_{\tau}^{\prime} 
\eqdef \Pa{  1 - \sfrac{16L}{\beta \lambda_{m}} }^2 - \sfrac{24\sigma}{\beta\lambda_{m}} \bPa{ \sfrac{16L^2}{\sigma\beta\lambda_{m}} + L + \beta\|A\|^2 + \sfrac{1}{\eta} + (\sfrac{128}{\sigma\beta\lambda_{m}} + 2\eta)(V_1 + \sfrac{V_{\Upsilon}}{\rho}) + 2C_2 } > 0. 
\end{equation}
which, after expansion, is another quadratic function of $\beta$. Since $\beta > 0$, we then get from above
\begin{equation}\label{eqe15}
(1 -24\sigma \kappa(AA^T))\beta^2 - 2\bPa{ 4 + 3\sigma + \sfrac{3\sigma}{\eta L} + \sfrac{6\sigma \eta}{L} (V_1 + \sfrac{V_{\Upsilon}}{\rho}) + \sfrac{6\sigma C_2}{L} } \nu \beta - 8 \nu^2 - \sfrac{192\nu^2}{L^2}(V_1 +  \sfrac{V_{\Upsilon}}{\rho}) > 0.
\end{equation}
Note that the reduced discriminant of the quadratic function in $\beta$ in the above relation  is always greater than $0$ provided that  $0 < \sigma < \sfrac{1}{24 \kappa(AA^T)}$.  Hence, if $\beta > \beta_+$ with $\beta_+$ being a larger root of the quadratic function in $\beta$,  then \eqref{eqe15}  holds and hence \eqref{eqe14} is  also true.  On the other hand,  it is not difficult  to compute that  the larger root $\tau_+$ of the quadratic equation on  $\tau$ in \eqref{eqe13} is equal to $\sfrac{\beta \lambda_{m}}{24\sigma} (1 - \sfrac{4\nu}{\beta} + \sqrt{\Delta_{\tau}^{\prime}})$ where $ \nu \eqdef \sfrac{4L}{\lambda_m} > 0$.  Next we show  $\tau_+ > \sfrac{\beta\|A\|^2}{2} >0$.  This is easy to see,  according to \eqref{eqe14},  we get 
\begin{equation}\label{eta2} 
\sfrac{\beta \|A\|^2}{2} 
< \sfrac{\beta \lambda_m}{24\sigma} \Pa{ 1 - \sfrac{4\nu}{\beta} } 
\Leftrightarrow 1 - \sfrac{4\nu}{\beta} - 12\sigma \kappa(AA^T) >0,
\end{equation}
which follows from $\beta > \beta_+$ immediately. 
\end{remark}

For convenience, let's define:
\begin{equation}\label{not1}
\begin{aligned}
&\Phi(x, z, u, x^{\prime}, u^{\prime}) \eqdef L_{\beta} (x, z, u) + \sfrac{4(1- \sigma)}{\sigma^2\beta\lambda_{m}} \| A^T ( u- u^{\prime} ) + \sigma B(x - x^{\prime})  \|^2 + \sfrac{8 (\sigma \tau + L)^2}{\sigma\beta\lambda_{m}} \| x - x^{\prime} \|^2\\
&\Phi_t \eqdef \Phi(x_t, z_t, u_t, x_{t-1}, u_{t-1}), ~~~~X_t = (x_t, z_t, u_t, x_{t-1}, u_{t-1}), ~~~~X^* = (x^*,z^*, u^*, x^*, u^*),
\end{aligned}
\end{equation}
where $\{ x_t, z_t, u_t\}$ is the sequence generated by Algorithm \ref{alg:SADMM}.

Next, under the condition that the objective function satisfies the KL property (see \cite{BST, ABRS}), we make full use of the decreasing property of stability functional  $\Psi_t$  and the boundedness of  $\dist(0, \partial \Phi(X_t))$ (see Lemma \ref{subgrad-bound}) to prove the following convergence results. See Appendix \ref{app1} for details of the proof.  

\begin{theorem}\label{convergence}
For problem \eqref{model} and Algorithm \ref{alg:SADMM}, suppose  $2\tau \geq \beta \|A\|^2$ and that Assumption \ref{assum} holds, and $\Phi$ is a semialgebraic function with $KL$ exponent $\theta \in [0, 1). $ Let $\{ X_t \}_{t=0}^{\infty}$ be a bounded sequence of iterates of Algorithm \ref{alg:SADMM} using a variance-reduced gradient estimator and $\tilde{\eta} > 0$. Then either $X_t$ is a critical point after a finite number of iterations, or $\{ X_t \}_{t=0}^{\infty}$ almost surely satisfies the finite length property in expectation:
\begin{equation}\label{eq59}
\sum_{t=0}^{\infty} \mathbb{E} \| x_{t+1} - x_t \| < \infty ,\quad
\sum_{t=0}^{\infty} \mathbb{E} \| u_{t+1} - u_t \| < \infty
\quad\textrm{and}\quad
\sum_{t=0}^{\infty} \mathbb{E} \| z_{t+1} - z_t \| < \infty.
\end{equation}
Moreover, there exists an iteration $m$ such that for all $i > m$,
\begin{equation}\nonumber
\begin{aligned}
&\sum_{t = m}^{i} \mathbb{E} \| x_{t+1} - x_t \| + \mathbb{E} \| x_{t} - x_{t-1} \|  + \mathbb{E} \| u_{t+1} - u_t \| \\
&\leq \sqrt{\mathbb{E} \| x_m - x_{m-1} \|^2} + \sqrt{\mathbb{E} \| x_{m-1} - x_{m-2} \|^2}+ \sqrt{\mathbb{E} \| u_m - u_{m-1} \|^2} + \sfrac{(2\sqrt{2} + 1)\sqrt{s}}{3K_1 \rho} \sqrt{\mathbb{E} \Upsilon_{m-1}} + K_3 \Delta_{m, i+1},
\end{aligned}
\end{equation} 
where 
$$
K_1 = p + \sfrac{2\sqrt{s} \sqrt{V_r}}{\rho},~~~~ K=\min\{\tilde{\eta}, \sfrac{1}{\sigma\beta}, C_2\},~~~ K_3 = \sfrac{(4\sqrt{2} + 2)K_1}{3K},
$$
and $\Delta_{m,n} \eqdef \phi (\mathbb{E}[\Psi_{m} - \Phi_{m}^*]) - \phi(\mathbb{E} [\Psi_n - \Phi_{n}^*])$. Here $p$ is a constant in Lemma \ref{subgrad-bound}.
\end{theorem}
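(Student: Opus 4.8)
The plan is to follow the now-standard KL-based recipe for proving finite-length properties, but carried out in expectation and with the extra complication that the stochastic gradient error term must be controlled. The starting point is the descent inequality \eqref{dec-relation} from Theorem~\ref{decrease}: taking full expectations and summing shows $\sum_t \mathbb{E}[\|x_{t+1}-x_t\|^2 + \|u_{t+1}-u_t\|^2 + \|z_{t+1}-z_t\|^2] < \infty$, so in particular $\mathbb{E}\|x_{t+1}-x_t\|^2 \to 0$, which by the ``Convergence of Estimator'' clause of Definition~\ref{var-redu} forces $\mathbb{E}\Upsilon_t \to 0$ and $\mathbb{E}\Gamma_t \to 0$. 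Since $\{X_t\}$ is bounded and $\Phi$ (hence $\Psi$, which differs from $L_\beta$ by nonnegative terms that vanish along the sequence) is continuous on its domain, the sequence $\mathbb{E}[\Psi_t - \Phi^*_t]$ decreases to a limit $\zeta \geq 0$; the dichotomy in the statement is exactly the split between $\zeta = 0$ (finite termination / critical point reached) and $\zeta > 0$ where the KL machinery is invoked.

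Next I would set up the telescoping argument. Let $K = \min\{\tilde\eta, \tfrac{1}{\sigma\beta}, C_2\} > 0$ so that \eqref{dec-relation} gives $\mathbb{E}_t\Psi_{t+1} + K(\|x_{t+1}-x_t\|^2 + \|x_t-x_{t-1}\|^2 + \|u_{t+1}-u_t\|^2) \leq \Psi_t$. The two ingredients needed are: (a) the subgradient bound $\dist(0,\partial\Phi(X_t)) \leq p(\|x_t-x_{t-1}\| + \|x_{t-1}-x_{t-2}\| + \|u_t-u_{t-1}\|) + (\text{terms in }\Gamma_{t-1},\|\tnabla H - \nabla H\|)$ from Lemma~\ref{subgrad-bound}, controlled in expectation via \eqref{var-mse1}; and (b) the concavity of the desingularizing function $\phi$, which yields $\phi(a) - \phi(b) \geq \phi'(a)(a-b)$ and, combined with the KL inequality $\phi'(\mathbb{E}[\Psi_t - \Phi^*_t]) \cdot \dist(0,\partial\Phi(X_t)) \gtrsim 1$ (applied to the expected-value sequence, using Jensen to pass from $\mathbb{E}$ of the distance to the distance of... actually the reverse, bounding $1 \leq \phi'(\cdot)\,\mathbb{E}\dist$), the AM--GM inequality $2\sqrt{xy}\leq x+y$. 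Writing $a_t \eqdef \mathbb{E}\|x_t - x_{t-1}\|$, $\delta_t \eqdef \mathbb{E}[\Psi_t - \Phi^*_t]$, and $\Delta_{m,n} = \phi(\delta_m) - \phi(\delta_n)$, one derives a recursion of the shape $a_{t+1}^2 \lesssim (a_t + a_{t-1} + \mathbb{E}\|u_t - u_{t-1}\| + \sqrt{s}\,\mathbb{E}\Gamma_{t-1}/\rho)\cdot K_1(\phi(\delta_t) - \phi(\delta_{t+1}))/K$, then take square roots, apply $2\sqrt{xy}\leq x+y$ to split the product, and sum over $t = m,\dots,i$. The $\Gamma$ terms get absorbed using \eqref{var-geo}: $\sum\Gamma_{t-1}$ is controllable by $\sqrt{\mathbb{E}\Upsilon_{m-1}}/\rho$ plus a multiple of $\sum a_t$, which is why the factor $K_1 = p + 2\sqrt{s}\sqrt{V_r}/\rho$ and the root-$\Upsilon$ boundary term appear.

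The telescoping produces $\sum_{t=m}^i (a_{t+1} + a_t + \mathbb{E}\|u_{t+1}-u_t\|) \leq (\text{a few boundary }a_m, a_{m-1}, \mathbb{E}\|u_m - u_{m-1}\|\text{ terms}) + (2\sqrt{2}+1)\sqrt{s}\sqrt{\mathbb{E}\Upsilon_{m-1}}/(3K_1\rho) + K_3\Delta_{m,i+1}$ with $K_3 = (4\sqrt 2 + 2)K_1/(3K)$, which is exactly the displayed estimate; the initial terms $a_m, a_{m-1}$ are themselves bounded by $\sqrt{\mathbb{E}\|x_m-x_{m-1}\|^2}$ etc.\ via Jensen. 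Letting $i\to\infty$ and using that $\phi(\delta_i)$ is bounded (indeed $\to\phi(\zeta)$ finite) gives $\sum_t \mathbb{E}\|x_{t+1}-x_t\| < \infty$ and $\sum_t \mathbb{E}\|u_{t+1}-u_t\| < \infty$; the $z$-increment sum then follows from $\|z_{t+1}-z_t\| \leq \|Ax_{t+1}-Ax_t\| + \tfrac{1}{\sigma\beta}(\|u_{t+1}-u_t\| + \|u_t - u_{t-1}\|)$ (rearranging the $u$-update \eqref{sub3}) together with surjectivity of $A$.

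The main obstacle is the bookkeeping required to close the recursion in expectation rather than almost surely: the subgradient of $\Phi$ at $X_t$ involves $\tnabla H(x_{t-1}) - \nabla H(x_{t-1})$ and $\Gamma_{t-1}$, which are \emph{not} themselves increment terms, so one must carefully use the MSE bound \eqref{var-mse1} and the geometric decay \eqref{var-geo} to trade these against a convergent geometric series in $\sqrt{\mathbb{E}\Upsilon}$ plus a small multiple of the increments $a_t$ that can be absorbed into the left-hand side. A secondary subtlety is legitimately applying the KL inequality to the deterministic sequence $\delta_t = \mathbb{E}[\Psi_t - \Phi^*_t]$: one needs the boundedness of $\{X_t\}$ to confine the iterates to a neighborhood $U$ where KL holds, and one needs $\Phi^*_t \to \Phi(X^*)$ (constancy of $\Phi$ on the limit set), both of which rely on the continuity of $\Phi$ on its domain and the vanishing of the auxiliary terms established in the first paragraph. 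Everything else is the routine concavity-plus-AM--GM summation that appears in \cite{BST, ABRS, DTLDS}.
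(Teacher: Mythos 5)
Your overall route is the same as the paper's: the descent inequality \eqref{dec-relation}/\eqref{eq31}, the subgradient bound of Lemma \ref{subgrad-bound} converted to second moments by Jensen, the geometric decay \eqref{var-geo} used to trade $\Gamma_{t-1}$ for a telescoping $\frac{2\sqrt{s}}{\rho}(\sqrt{\mathbb{E}\Upsilon_{t-1}}-\sqrt{\mathbb{E}\Upsilon_t})$ term (whence $K_1$ and the $\sqrt{\mathbb{E}\Upsilon_{m-1}}$ boundary term), concavity of $\phi$ plus Young's inequality, summation and absorption of the increment fractions, Jensen on the left, and \eqref{eq33} for the $z$-increments. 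However, there is one concrete gap: you invoke the Kurdyka--\L ojasiewicz inequality directly in the form $\phi'(\mathbb{E}[\Psi_t-\Phi_t^*])\,\mathbb{E}\,\mathrm{dist}(0,\partial\Phi(X_t))\geq 1$, i.e.\ with the Lyapunov quantity $\Psi_t$ in the argument of $\phi'$. The hypothesis of the theorem only gives the KL property for $\Phi$, and (via Theorem \ref{KL}) the inequality one actually gets is $\phi_0'(\mathbb{E}[\Phi(X_t)-\Phi_t^*])\,\mathbb{E}\,\mathrm{dist}(0,\partial\Phi(X_t))\geq 1$. Since $\Psi_t$ exceeds $\Phi(X_t)$ by the stochastic surplus $\Lambda_t$ (the $\|\tnabla H(x_{t-1})-\nabla H(x_{t-1})\|^2$, $\Upsilon_t$ and $\|x_t-x_{t-1}\|^2$ terms), and $\phi'$ is decreasing, the substitution is not automatic: one must show that $\mathbb{E}\Lambda_t$ is eventually dominated by $c\,\mathbf{C}_t^{1/\theta}$, where $\mathbf{C}_t$ is the increment bound on $\mathbb{E}\,\mathrm{dist}(0,\partial\Phi(X_t))$. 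This quantitative comparison (quadratic quantities versus the $1/\theta$ power of first-order quantities) is exactly why the paper first reduces to $\theta\in[\frac12,1)$ — noting that exponent $\theta<\frac12$ implies exponent $\frac12$ — and then uses $(a+b)^{\theta}\leq a^{\theta}+b^{\theta}$ to pass to an enlarged desingularizer $\phi(r)=ad\,r^{1-\theta}$, which is where the constants in $\Delta_{m,n}$ come from. Your sketch neither performs nor flags this step; "vanishing of the auxiliary terms" alone is not sufficient, since the comparison must be against $\mathbf{C}_t^{1/\theta}$, which itself tends to zero.

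A secondary misattribution: the subtleties you do flag for applying KL in expectation (confining iterates to a KL neighborhood, $\Phi_t^*\to\mathbb{E}\Phi(X^*)$, constancy of $\Phi$ on the cluster set) are real but are precisely what the cited random KL result (Theorem \ref{KL}, i.e.\ Lemma C.4 of \cite{DTLDS}, built on the properties of the cluster set in Lemma \ref{lemma3}) already delivers; the genuinely new work in this theorem is the $\Phi$-to-$\Psi$ transfer described above, together with the bookkeeping that yields the stated coefficients $\frac{(2\sqrt2+1)\sqrt{s}}{3K_1\rho}$ and $K_3=\frac{(4\sqrt2+2)K_1}{3K}$. With that transfer supplied, the rest of your argument matches the paper's proof.
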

\begin{proof}
{Below we also only provide a sketch and refer to Appendix \ref{app1} for details. 
First of all, we show an upper estimate of subgradients of the function $\Phi$ at $(x_t, z_t, u_t, x_{t-1}, u_{t-1})$ for every $t \geq 1$ in Lemma \ref{subgrad-bound}. Next, we split the proof into three steps.\\
{\bf Step 1.} We show that $\Phi$ also satisfies the KL property with exponent $\theta \in (0,1)$ in expectation (see Theorem \ref{KL}), and we prove that the stability function $\Psi$ satisfies the following inequality:
\begin{equation}\nonumber
\phi^{\prime}(\mathbb{E}[ \Psi_t - \Phi_t^*] ) {\bf C_t} \geq 1, ~~~\forall t >m.
\end{equation}
{\bf Step 2.} By using the concavity of $\phi$ and the decreasing property of $\Psi_t$, we have (see also \eqref{eq78})
\begin{equation}\nonumber
\Delta_{t,t+1} {\bf C_t} \geq K \mathbb{E} \| x_{t+1} - x_t \|^2 + K \mathbb{E} \| x_{t} - x_{t-1} \|^2+ K \mathbb{E} \| u_{t+1} - u_t \|^2.
\end{equation}
{\bf Step 3.} Combining the above inequality, Young’s inequality and Jensen’s inequality, we have $\{ \| x_{t+1} - x_t \|, \| u_{t+1} - u_t \|, \| z_{t+1} - z_t \| \}$ is summable and end the proof.}
\end{proof}



\begin{remark}
Follow the result of \cite{DTLDS}, convergence rates can also be obtained under KL exponent. However, we refer to \cite{DTLDS} for discussions since the obtained rates are no different from the standard deterministic scenario, that is for the function $\Phi$, finite termination can be reached if the exponent $\theta = 0$, linear convergence for $\theta \in ]0, 1/2]$ and sub-linear rate for $\theta \in ]1/2, 1[$. 

\end{remark}

%
%
%
%
%

\section{ Numerical experiments}
\label{sec:examples}

In this section, we provide numerical experiments to verify the performance of Algorithm \ref{alg:SADMM}. Three problems are considered:graph-guided Fussed Lasso, wavelet frame based 2D CT reconstruction and  TV-$L_0$ for 3D CT reconstruction. All experiments are run in MATLAB R2019a on a desktop equipped with a 4.0GHz 8-core AMD processor and 16GB
memory.
\subsection{Graph-guided Fused Lasso}
\label{4.1}
We first consider a binary classification problem which combines  correlations between features. Given a set of training samples $\{ (a_i, b_i) \}_{i=1}^{n}$, where $a_i \in \mathbb{R}^m$ and $b_i \in \{ -1, +1 \}$ for $i \in \{ 1, 2, \cdots, n\}.$ We solve this problem by using the following model, called the graph-guided fused lasso \cite{KSX}:
\begin{equation}\label{Flasso}
\min_{x} \sfrac{1}{n} \sum_{i=1}^{n} f_i(x) + \lambda_1\|Ax\|_1,
\end{equation}
where $f_i(x) = \sfrac{1}{1 + \mbox{exp}(b_i a_i^{T} x)}$ is the sigmoid loss function which is nonconvex and smooth, and $\lambda_1$ is the regularization parameter. The matrix $A = [G; I]$ and $G$ is obtained by sparse inverse covariance matrix estimation \cite{FHT,HSDR}.

In this experiment, we set $H(x) = \sfrac{1}{n} \sum_{i=1}^{n} f_i(x)$ and $F(Ax) = \lambda_1\|Ax\|_1$.  We consider the four publicly available datasets \cite{data} shown in Table \ref{GLS1}.

\begin{table}[htbp]\footnotesize
\centering
\begin{tabular}{|c|c|c|c|c|c|}
\hline
datasets &  $\sharp$training & $\sharp$test &  $\sharp$features & $\sharp$classes \\ \cline{1-5}
a8a  &11348 & 11348 & 123 & 2 \\ \cline{1-5}
ijcnn & 17500  & 17500  & 22 & 2  \\\cline{1-5}
mushrooms & 4062 & 4062 & 112 & 2  \\\cline{1-5}
w3a & 22418 & 22419 & 300 & 2  \\\cline{1-5}
\end{tabular}
\caption{Real datasets for graph-guided fused lasso.}\label{GLS1}
\end{table}
We evaluate the following several algorithms when applied to the model \eqref{Flasso}: SADMM with SARAH estimator, SADMM with SAGA estimator, ADMM with SVRG estimator \cite{HC}, ADMM with SGD estimator \cite{HC} and the deterministic ADMM \cite{LPADMM}. We fix the parameter $\lambda_1 = 1e-5, \sigma = 0.95$ and $\beta = 1$. We point out that the selection of parameter $\eta$ in \cite{HC} and the selection of $\tau$ in Algorithm \ref{alg:SADMM} are conservative.  We choose the parameters $\eta$ and $\tau$ to guarantee the convergence, and  achieve the optimal result at the same time. Since we have no requirement for the batch size $b$, we choose the batch size in each algorithm that gives the optimal experimental result.  We  take  $x^0 = 0$ for the initialization.

\begin{figure}[!ht]
	\centering
	\subfloat[{\tt a8a}]{ \includegraphics[width=0.4\linewidth]{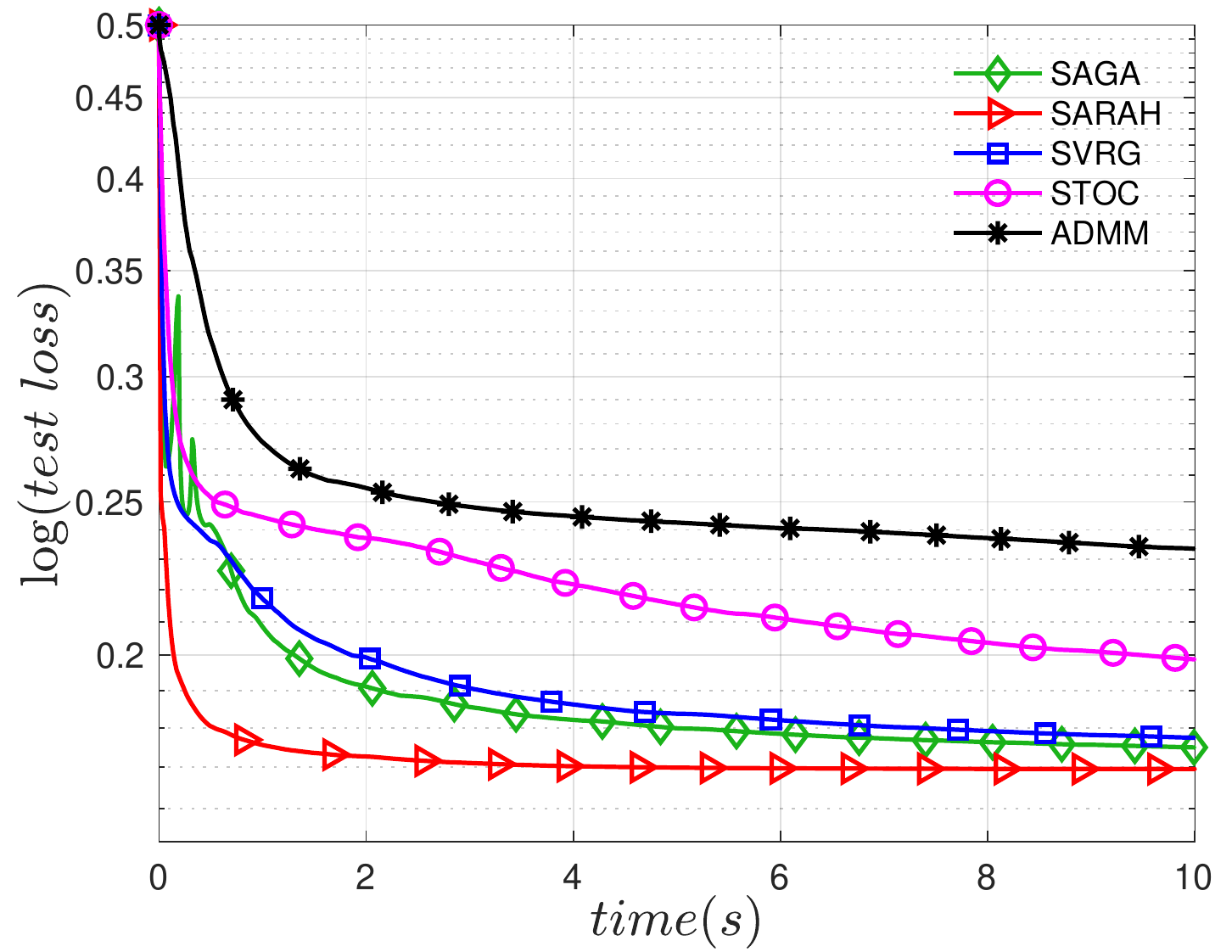} } 		
	\subfloat[{\tt ijcnn}]{ \includegraphics[width=0.4\linewidth]{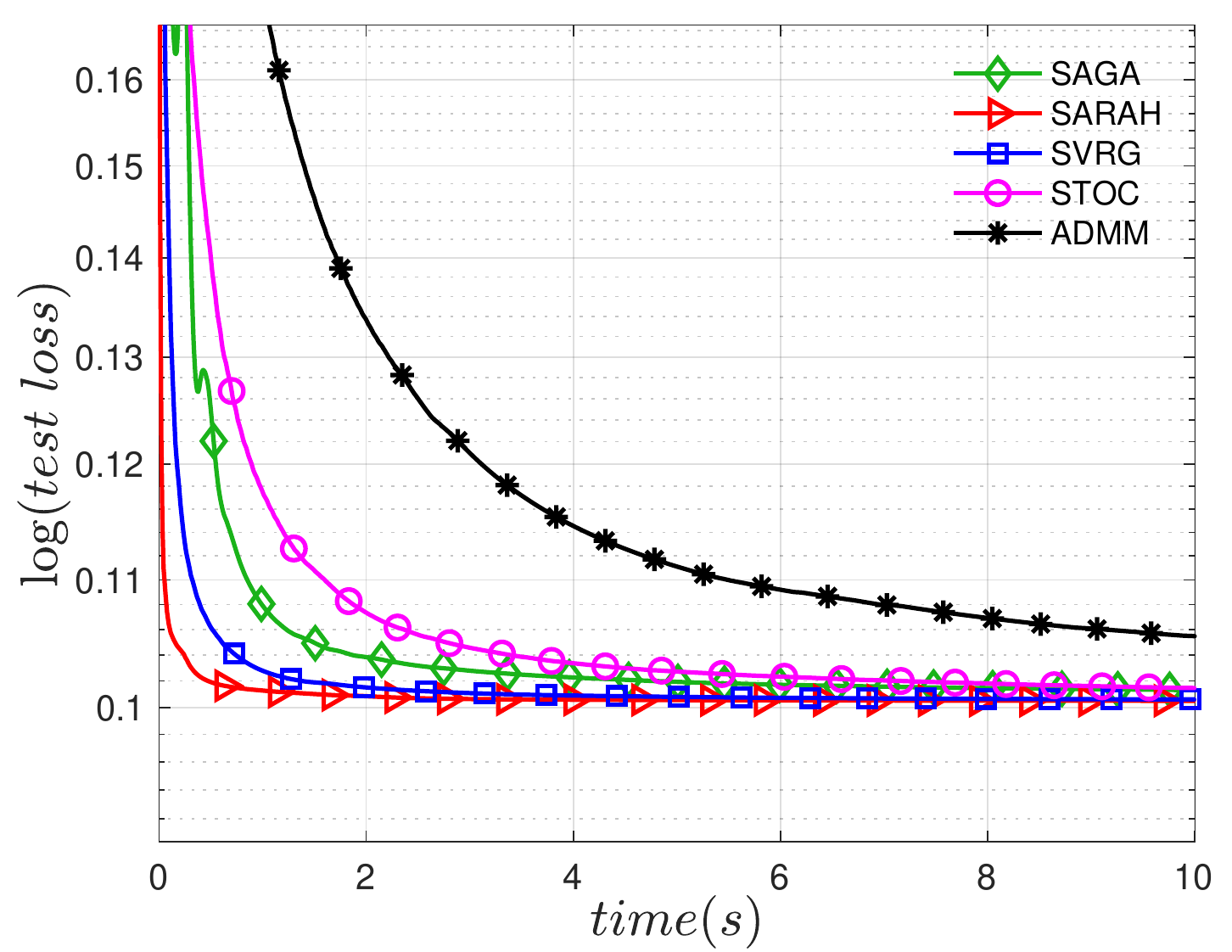} } 	\\[-2mm]	
	\subfloat[{\tt mushrooms}]{ \includegraphics[width=0.4\linewidth]{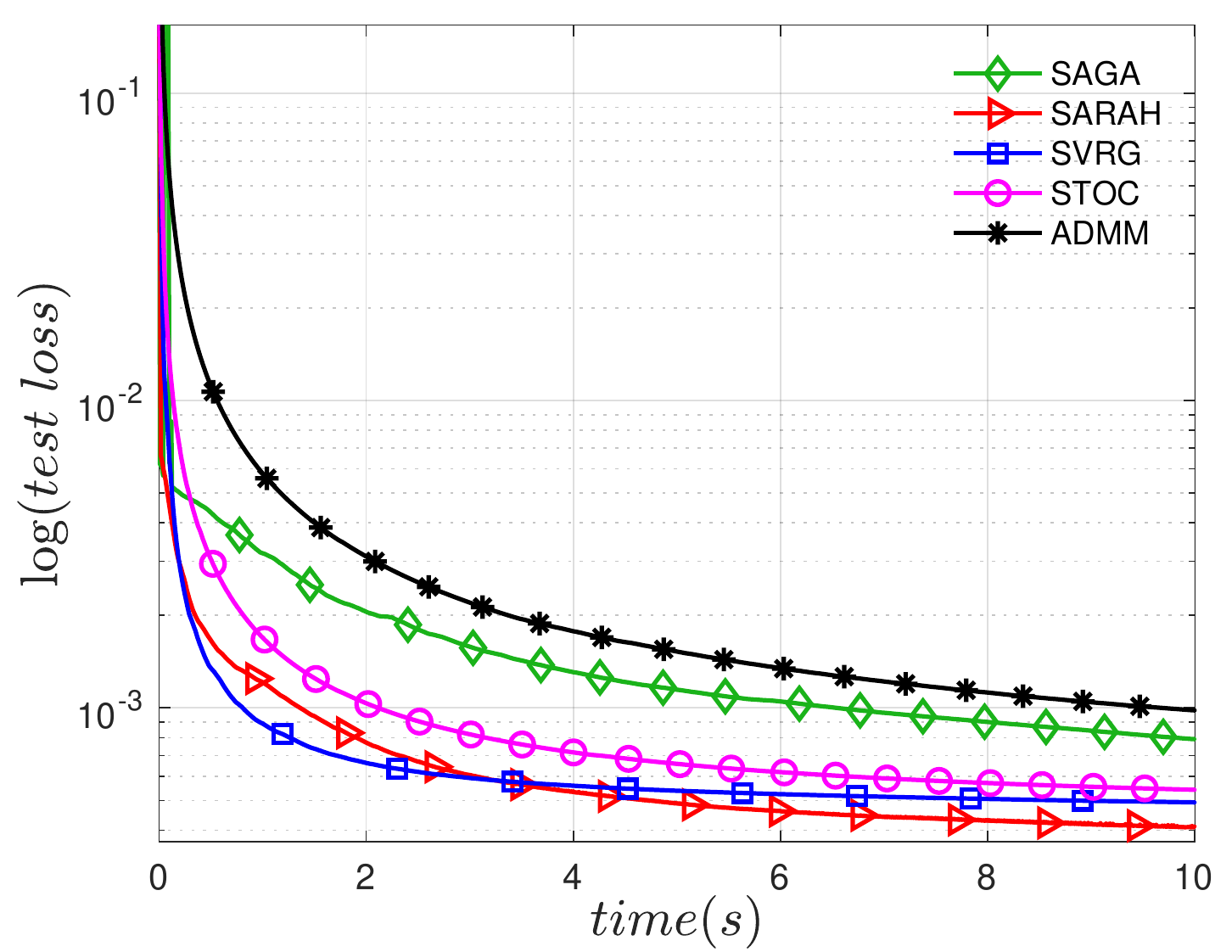} } 
	\subfloat[{\tt w3a}]{ \includegraphics[width=0.4\linewidth]{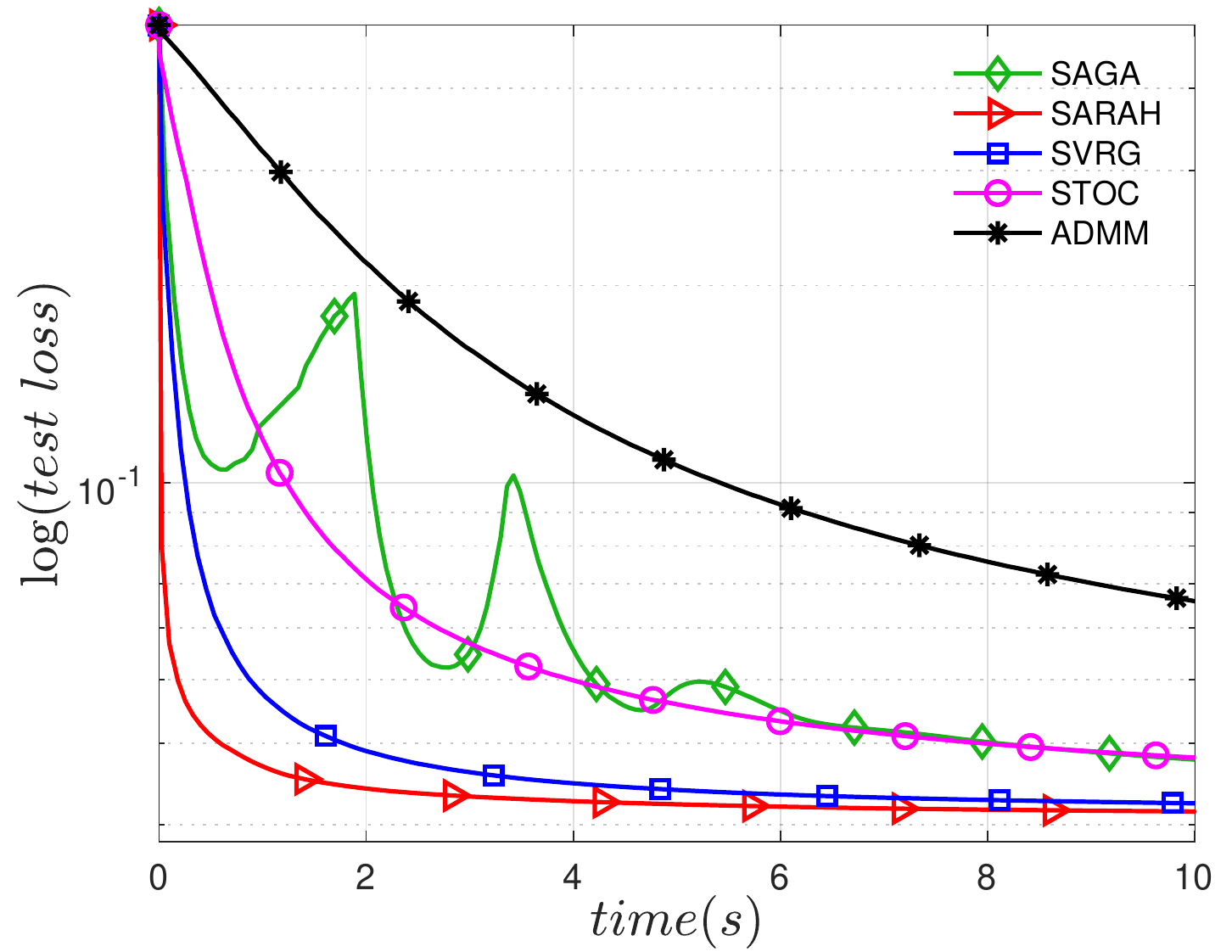} } 	\\[-2mm]	
	\caption{Test loss for different methods with the same CPU-time.} 
	\label{GLS2}
\end{figure}

%

In Figure \ref{GLS2}, we give the results of different methods for test loss with the same running time. Figure \ref{GLS3} shows that the results of different algorithms for solving the problem \eqref{Flasso} by using the same number of epoch, where each epoch estimates $n$ component gradients. We can see that the SARAH-ADMM always has the lowest test-loss and it also faster than SAGA-ADMM, SVRG-ADMM, STOC-ADMM and ADMM. We also note that SVRG-ADMM and SAGA-ADMM have the similar behaviors and both algorithms  are faster than the deterministic ADMM.

\begin{figure}[!ht]
	\centering
	\subfloat[{\tt a8a}]{ \includegraphics[width=0.4\linewidth]{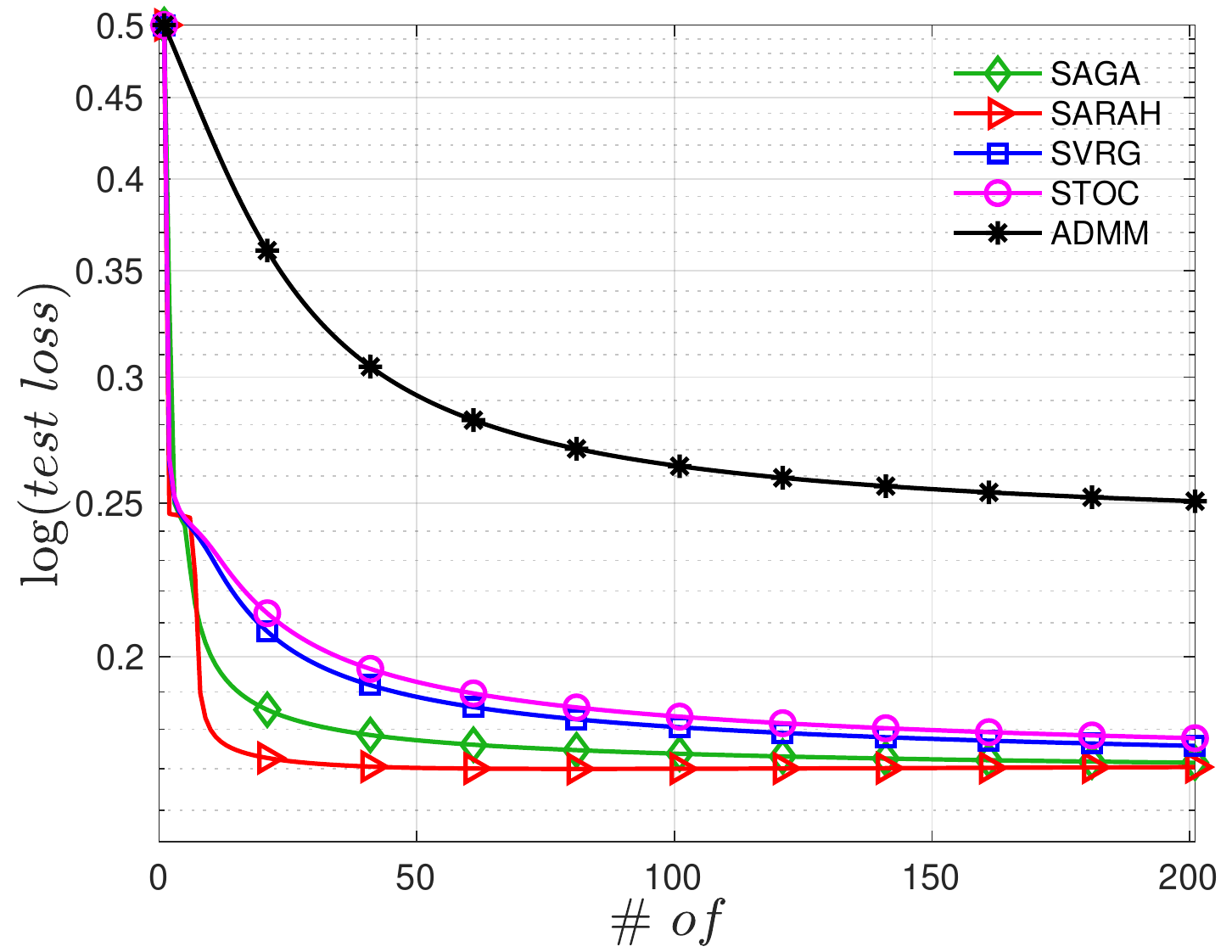} } 		
	\subfloat[{\tt ijcnn}]{ \includegraphics[width=0.4\linewidth]{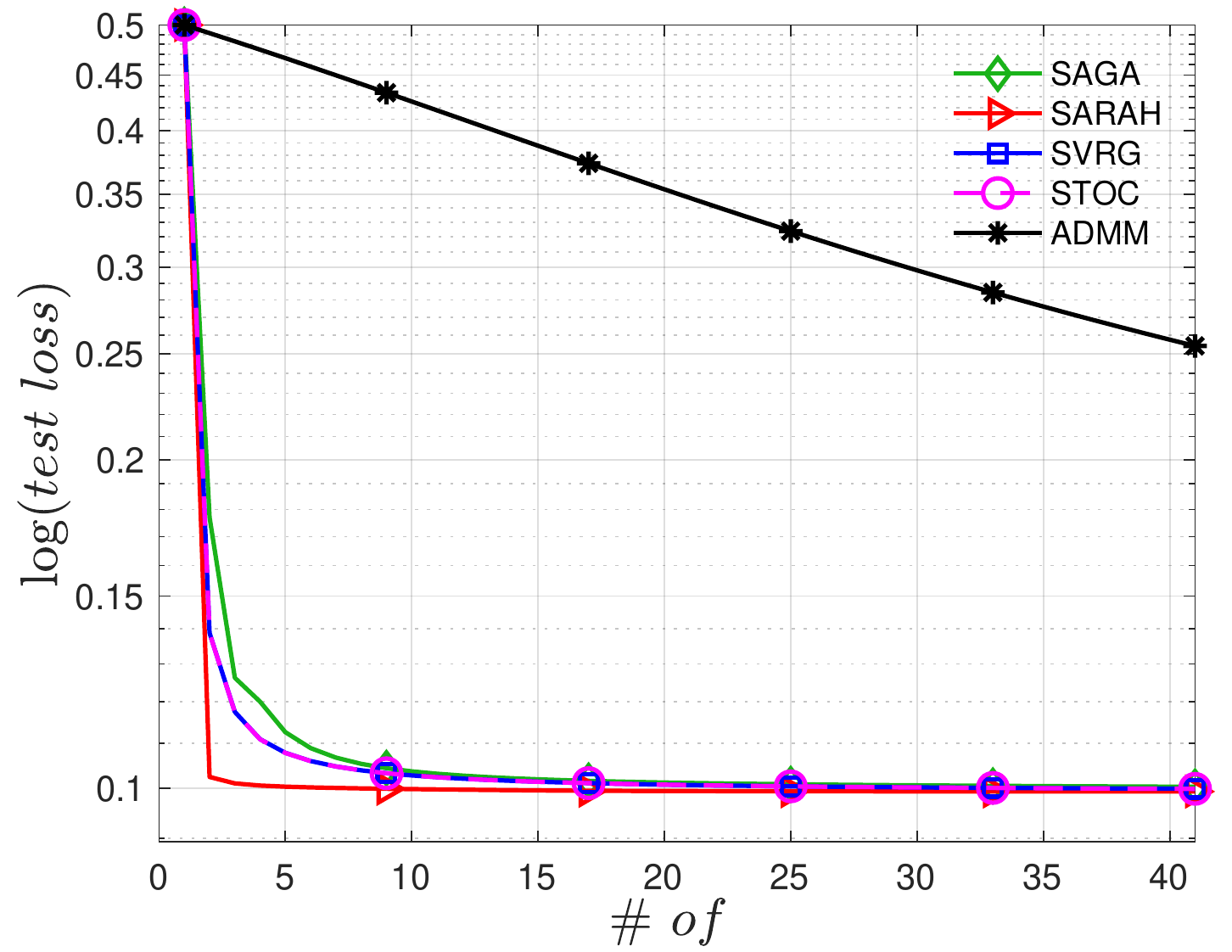} } 	\\[-2mm]	
	\subfloat[{\tt mushrooms}]{ \includegraphics[width=0.4\linewidth]{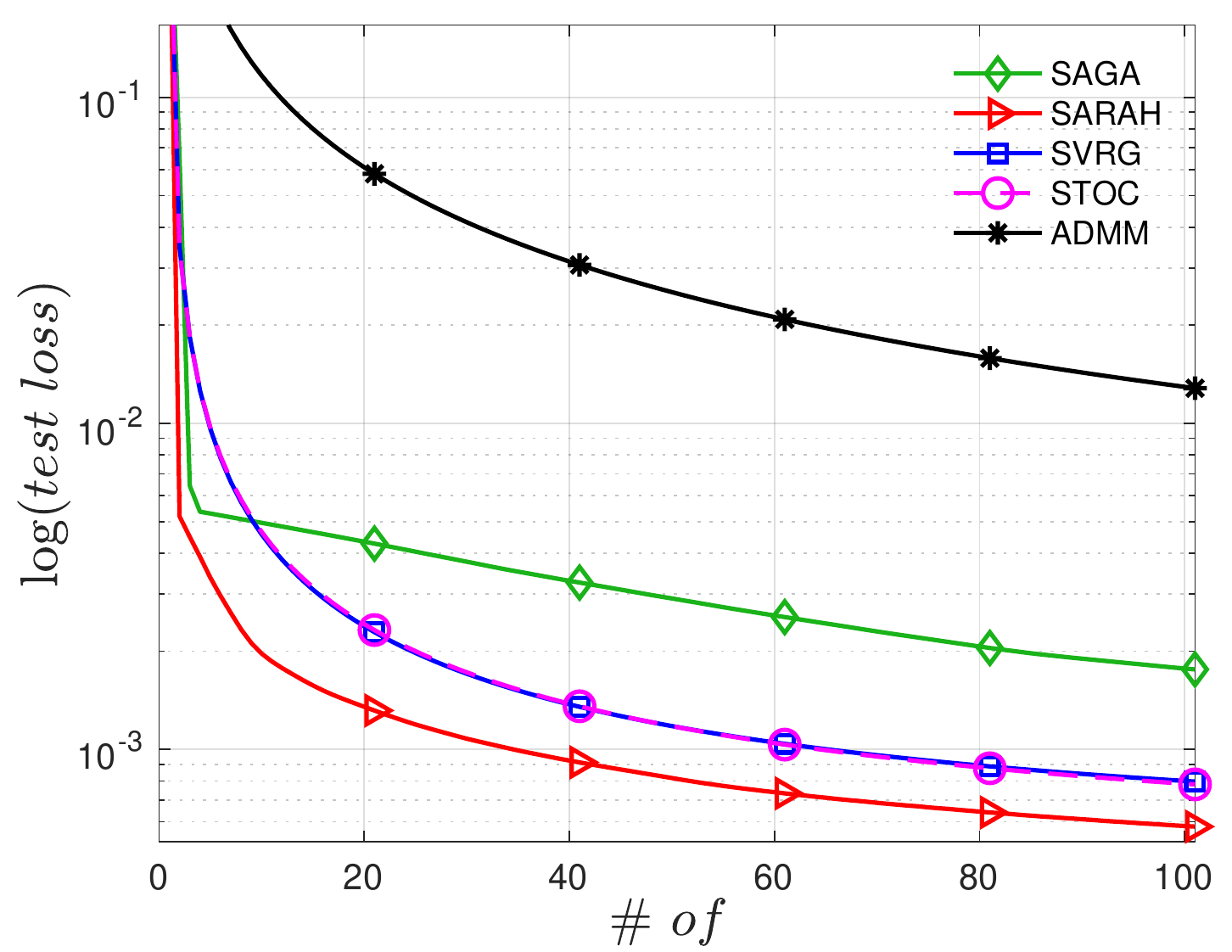} } 
	\subfloat[{\tt w3a}]{ \includegraphics[width=0.4\linewidth]{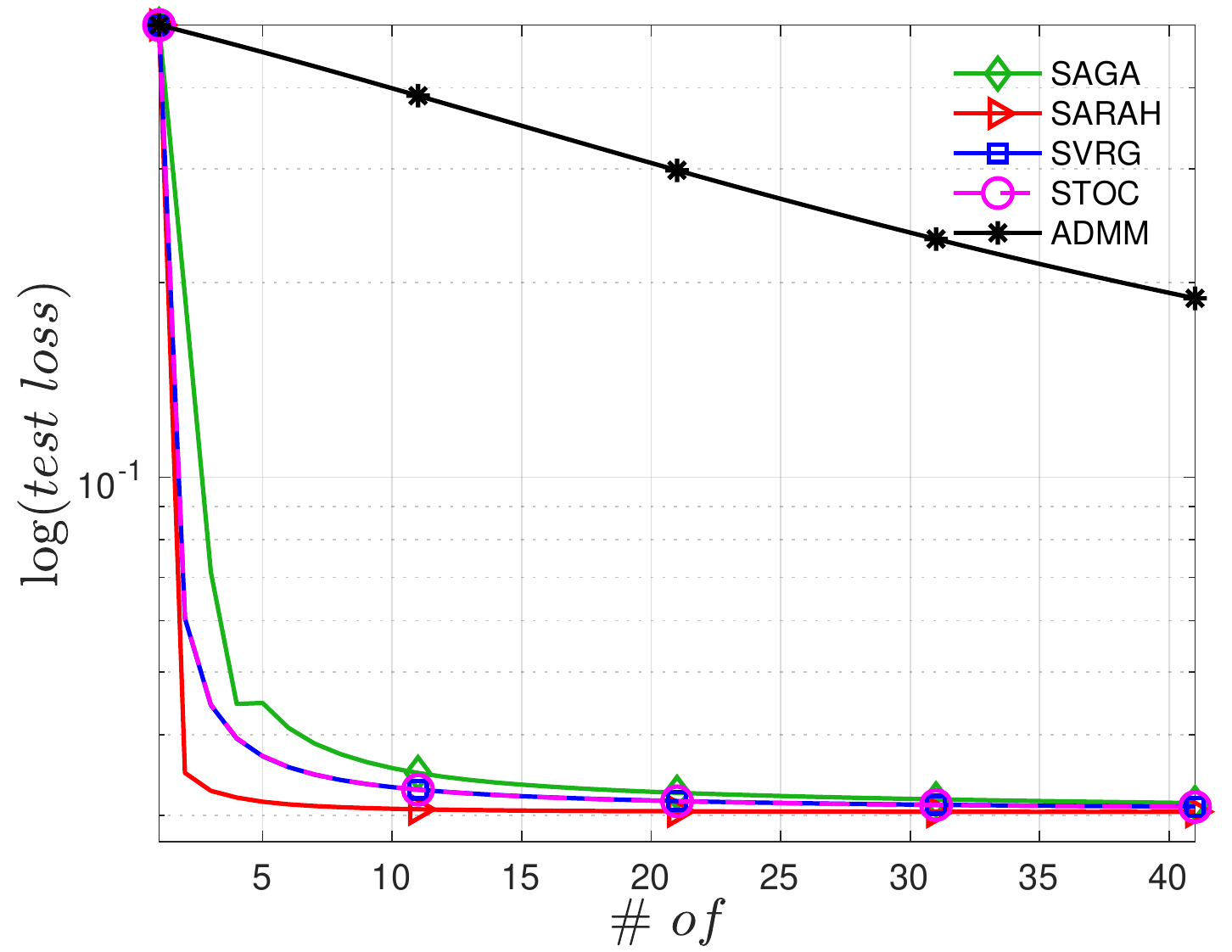} } 	\\[-2mm]	
	\caption{Test loss for different methods with the same number of epoch.} 
	\label{GLS3}
\end{figure}

\subsection{ Wavelet Frame Based 2D CT Reconstruction}
\label{4.2}
In the past decade, wavelet frame based models have been widely used for image denoising, reconstruction and restoration \cite{ZDL}. Wavelet frame based approaches utilized the observation that images can often be sparsely approximated by properly designed wavelet frames. At the same time, a wavelet frame based $L_0$ minimization model has presented its advantages \cite{DZ}. In this subsection, we use the wavelet frame based $L_0$ norm as a regularization term for 2D CT reconstruction problem:
\begin{equation}\nonumber
\min_{x} \sfrac{1}{n} \sum_{i=1}^n ( \mathcal{R}_i x - b_i )^2 + \sum_{i} \lambda_i \| (\mathcal{W}x)_i \|_0,
\end{equation}
where $\mathcal{R}$  is the Radon operator generated by using fan beam scanning geometry \cite{G} and the number of detectors is 512. We generate the observed data in the following two cases:\\

{\bf Case 1.}  Let the number of viewers be 360, then the dimension of observed  data $b$ is 184320. We add the white noise with mean 0 variance 0.5 on the measured data $b$.\\

{\bf Case 2.} Let the number of viewers be 60, then the dimension of observed  data $b$ is 30720. We  add the white noise with mean 0 variance 0.15 on the measured data $b$.\\

We solve the above two cases by using SARAH-ADMM, SAGA-ADMM, SVRG-ADMM, STOC-ADMM and deterministic ADMM. In two cases, we set $\lambda = 1e-5$ and  $\beta = 1e-4$. In each algorithm, $\tau$ and batch size are taken to be the value that makes the experiment result best. Figure \ref{wav1} shows how the PSNR  of images changes as the running time increases.  As we can see in the magnified  image on the left, ADMM and SARAH-ADMM can achieve relatively high PSNR in the shortest time. From the magnified  image on the right and  Figure \ref{wav2}, we see that the final PSNR of SARAH-ADMM is relatively high. The SAGA-ADMM can also reach a relatively high PSNR, but its speed is relatively slow. In Figure \ref{wav2} and Figure \ref{wav3}, we give the final CT image reconstruction results for {\bf Case 1} and {\bf Case 2} respectively, from which we see that SARAH-ADMM gets better results than  other methods. \\

\begin{figure}[htbp]
\centering
\includegraphics[width=0.8\textwidth] {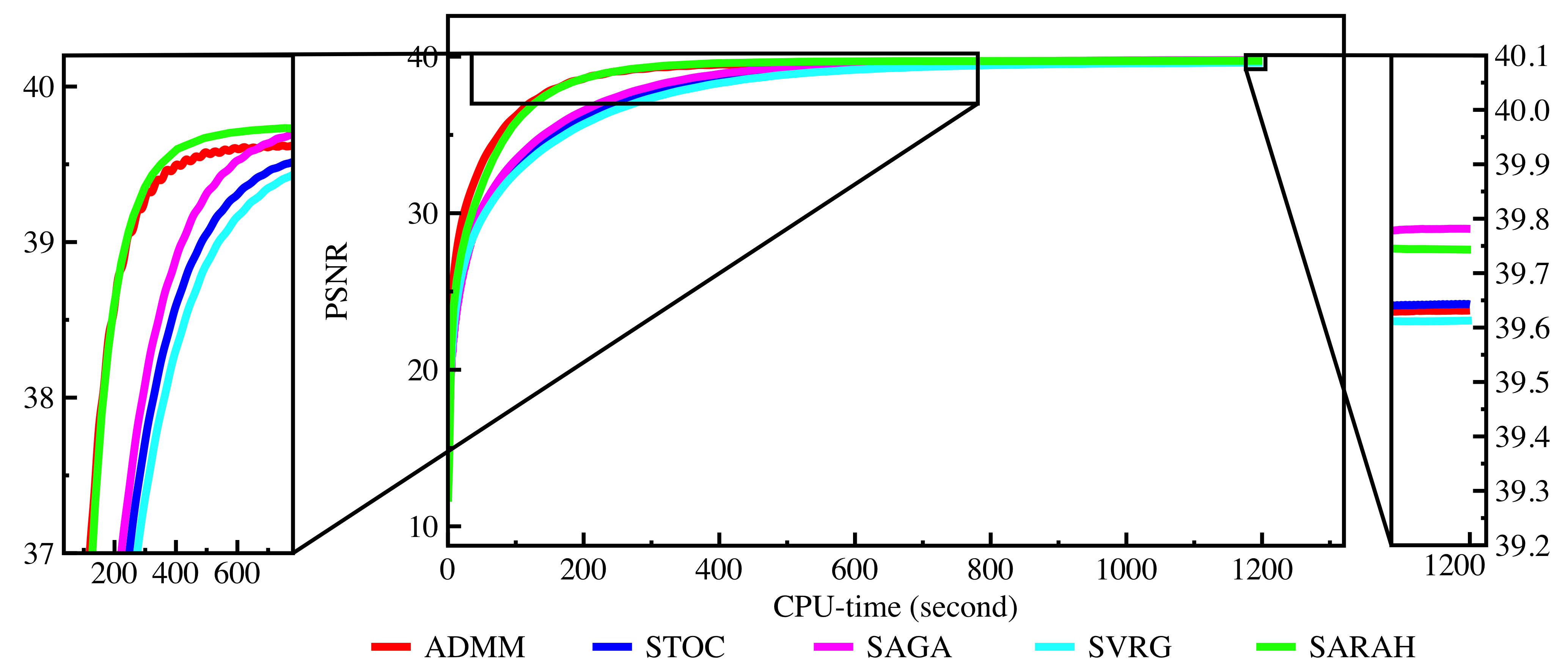}
\caption{PSNR results of different methods for {\bf Case 1}}\label{wav1}
\end{figure}

\begin{figure}[!ht]
	\centering
	\subfloat[{\tt Ground truth}]{ \includegraphics[width=0.3\linewidth]{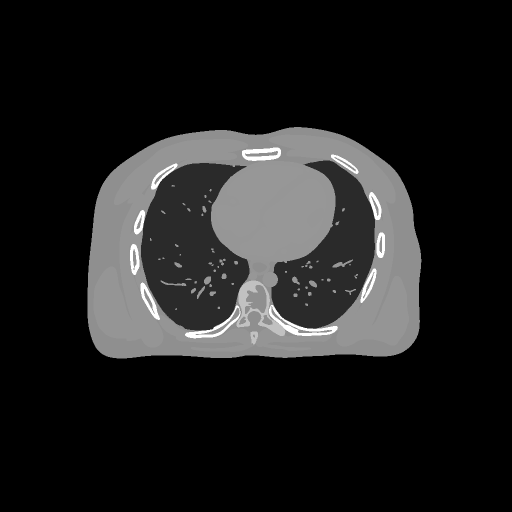} } 		
	\subfloat[{\tt STOC= 39.64}]{ \includegraphics[width=0.3\linewidth]{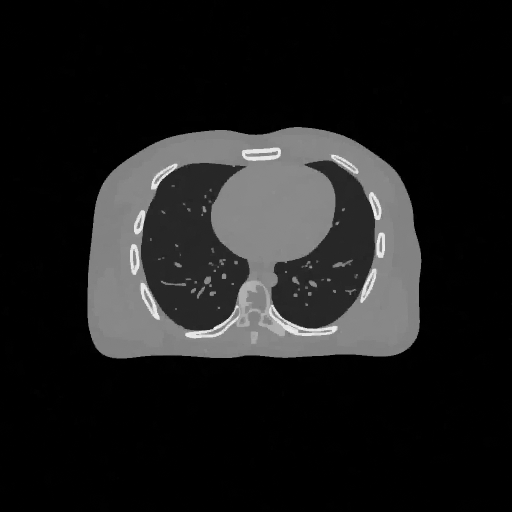} } 		
	\subfloat[{\tt SVRG= 39.61}]{ \includegraphics[width=0.3\linewidth]{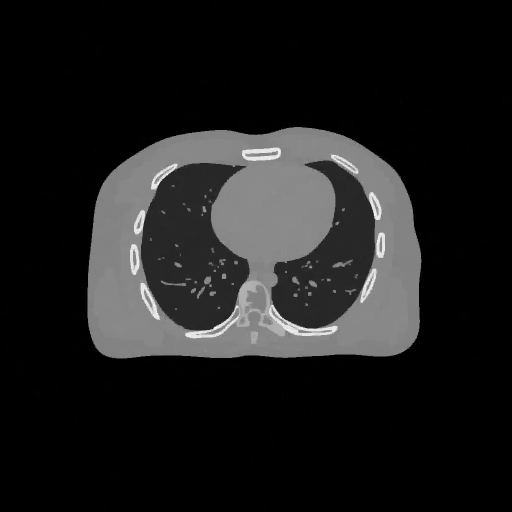} } 	\\[-2mm]	
	\subfloat[{\tt ADMM = 39.63}]{ \includegraphics[width=0.3\linewidth]{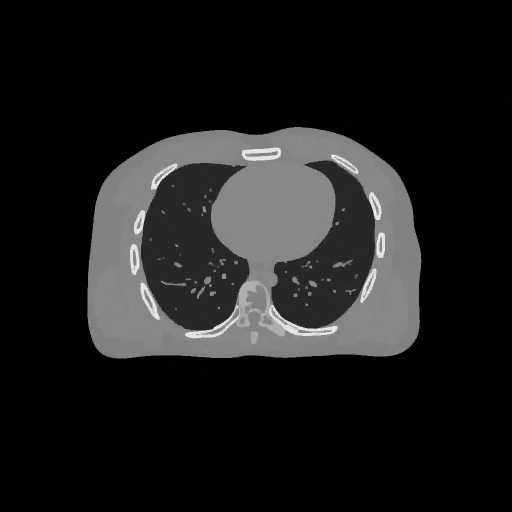} } 
	\subfloat[{\tt SAGA= 39.78}]{ \includegraphics[width=0.3\linewidth]{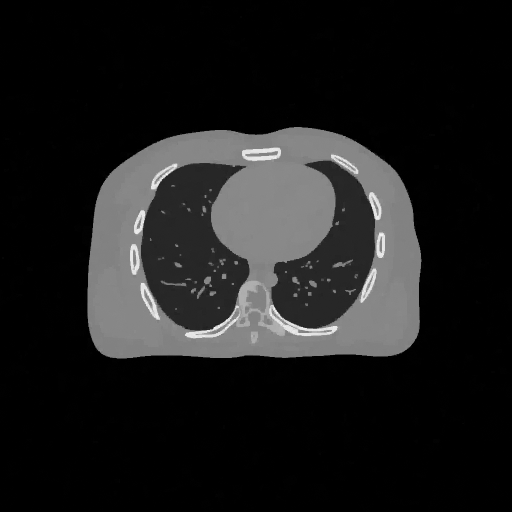} } 
	\subfloat[{\tt SARAH= 39.74}]{ \includegraphics[width=0.3\linewidth]{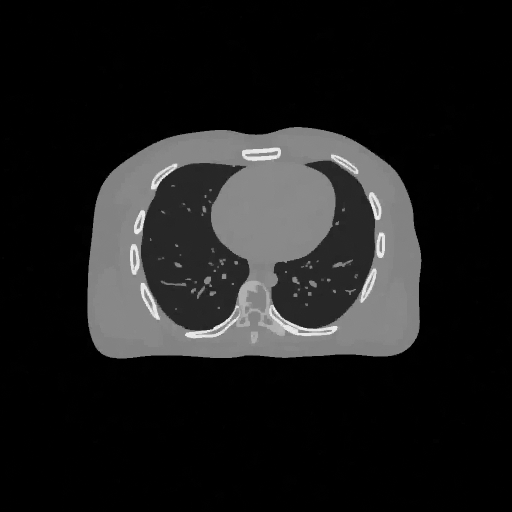} } 	\\[-2mm]	
	\caption{ Final reconstruction images of different methods for {\bf Case 1}.}
    \label{wav2}
\end{figure}


\begin{figure}[!ht]
	\centering
	\subfloat[{\tt Ground truth}]{ \includegraphics[width=0.3\linewidth]{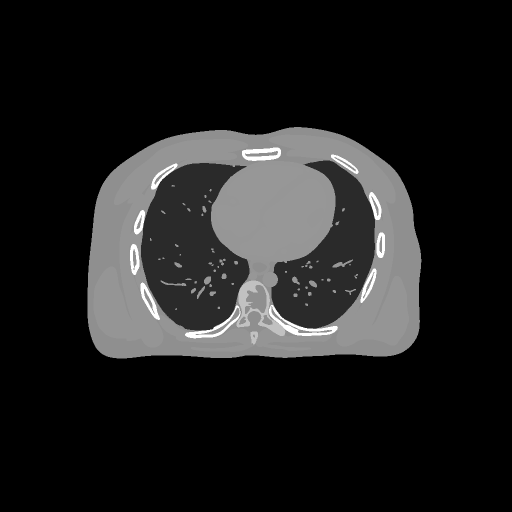} } 		
	\subfloat[{\tt STOC= 38.27}]{ \includegraphics[width=0.3\linewidth]{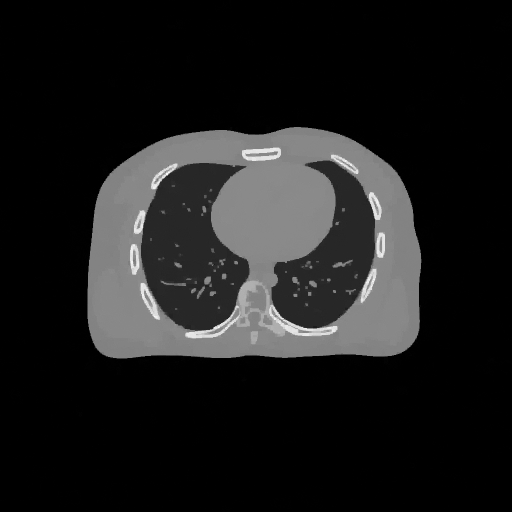} } 		
	\subfloat[{\tt SVRG= 37.99}]{ \includegraphics[width=0.3\linewidth]{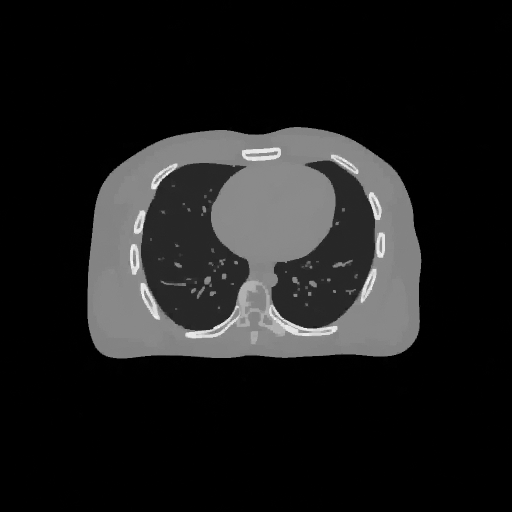} } 	\\[-2mm]	
	\subfloat[{\tt ADMM = 37.99}]{ \includegraphics[width=0.3\linewidth]{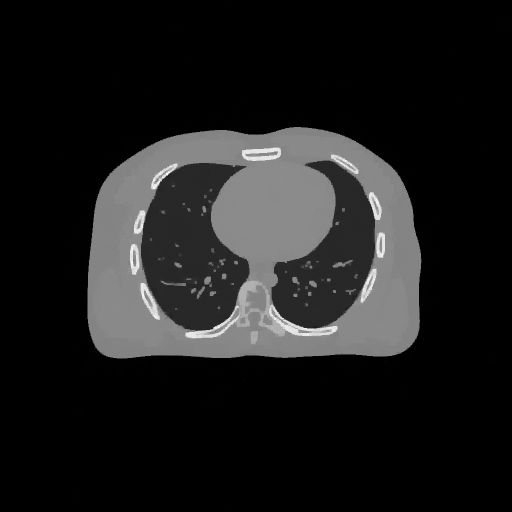} } 
	\subfloat[{\tt SAGA= 37.54}]{ \includegraphics[width=0.3\linewidth]{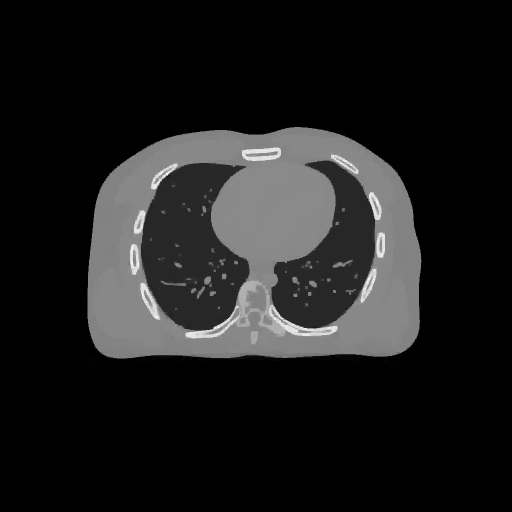} } 
	\subfloat[{\tt SARAH= 38.71}]{ \includegraphics[width=0.3\linewidth]{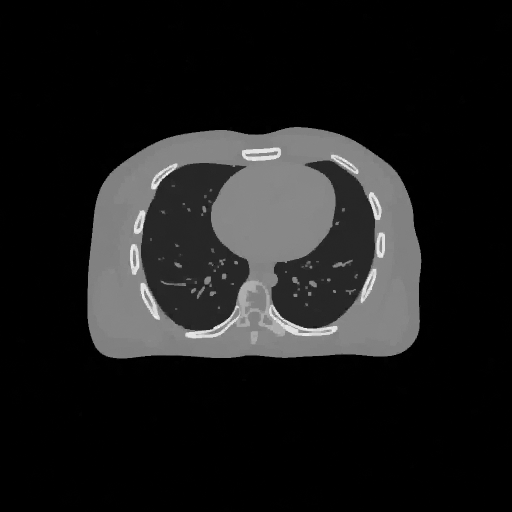} } 	\\[-2mm]	
	\caption{Final reconstruction images of different methods for {\bf Case 2}.}
    \label{wav3}
\end{figure}


\subsection{ TV-$L_0$ for 3D CT Reconstruction}
\label{4.3}
In this subsection, we consider the 3D CT reconstruction problem by using TV-$L_0$ norm as the regularization :
\begin{equation}\nonumber
\min_{x} \sfrac{1}{n} \sum_{i=1}^n ( \mathcal{R}_i x - b_i )^2 +  \lambda \| \nabla x \|_0,
\end{equation}
where $\mathcal{R}$ is the Radon transform generated by using the cone beam scanning geometry \cite{G}, the detectors plane is $512 \times 384$ and the number of viewers is 668. The size of 3D image which will be  constructed is $256 \times 256 \times 64$. Then the size of observed data $b$ is 131334144, that is to say, $n$ is considerably large. Here, we compare SARAH-ADMM with SVRG-ADMM, STOC-ADMM and ADMM. We let $\lambda = \mathcal{O}(1e-2)$, $\beta  = \mathcal{O}(1e-11)$ and $\sigma = 0.95$. The parameters $\tau$ and batch size $b$ are selected so that each algorithm can achieve the best result. Figure \ref{3d1} shows how the PSNR of 3D images changes as the running time increases. Figure \ref{3d2}, \ref{3d3} present the images and PSNR of the 15th, 55th slices of the 3D image. We can see that SARAH-ADMM and SVRG-ADMM can achieve a better PSNR in a relatively short time. However, STOC-ADMM and ADMM are relatively slow. From Figure \ref{3d2} and Figure \ref{3d3}, we also can see that the final PSNR of SARAH-ADMM and SVRG-ADMM is better. 

\begin{figure}[htbp]
\centering
\includegraphics[width=0.8\textwidth] {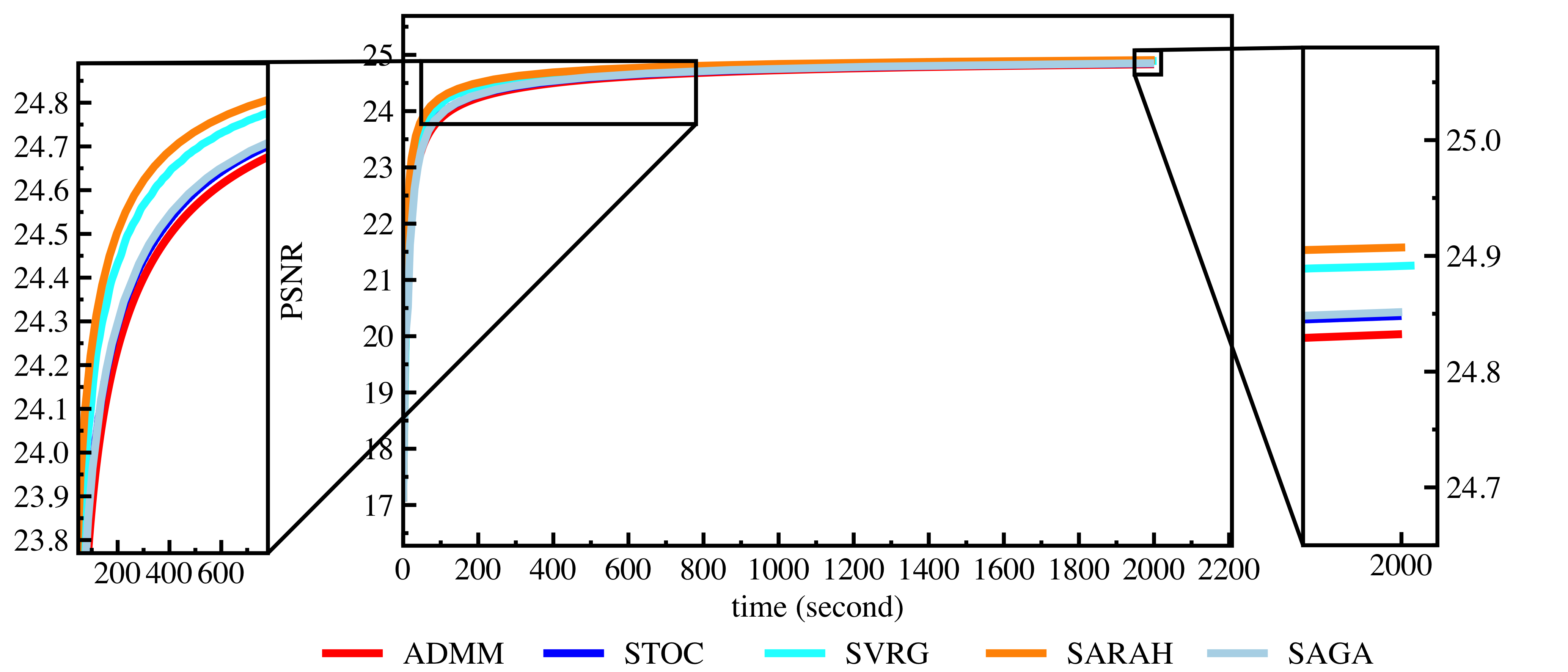}
\caption{PSNR for different methods}\label{3d1}
\end{figure}

\begin{figure}[!ht]
	\centering
	\subfloat[{\tt Ground truth}]{ \includegraphics[width=0.3\linewidth]{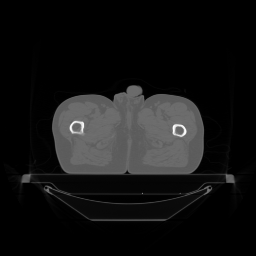} } 		
	\subfloat[{\tt STOC= 49.05}]{ \includegraphics[width=0.3\linewidth]{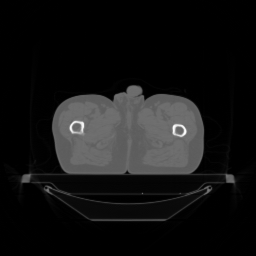} } 		
	\subfloat[{\tt SVRG= 52.37}]{ \includegraphics[width=0.3\linewidth]{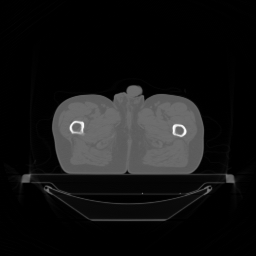} } 	\\[-2mm]	
	\subfloat[{\tt ADMM = 47.82}]{ \includegraphics[width=0.3\linewidth]{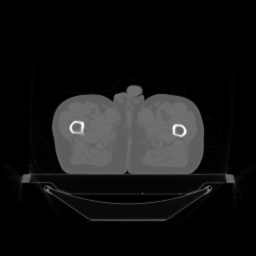} } 
	\subfloat[{\tt SAGA= 49.39}]{ \includegraphics[width=0.3\linewidth]{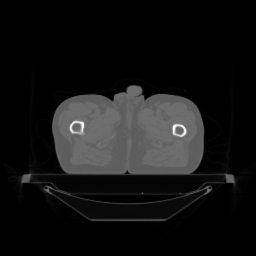} } 
	\subfloat[{\tt SARAH= 53.58}]{ \includegraphics[width=0.3\linewidth]{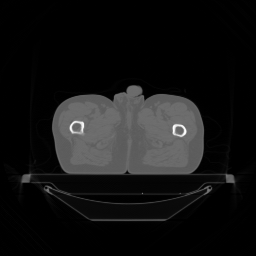} } 	\\[-2mm]	
    \caption{ Final reconstruction image of different methods for the 15th slice.}
    \label{3d2}
\end{figure}

\begin{figure}[!ht]
	\centering
	\subfloat[{\tt Ground truth}]{ \includegraphics[width=0.3\linewidth]{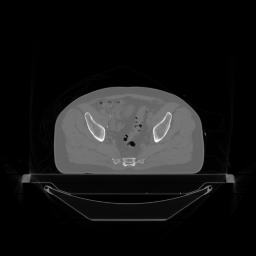} } 		
	\subfloat[{\tt STOC= 46.86}]{ \includegraphics[width=0.3\linewidth]{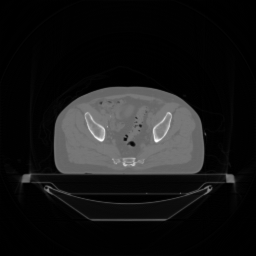} } 		
	\subfloat[{\tt SVRG= 49.99}]{ \includegraphics[width=0.3\linewidth]{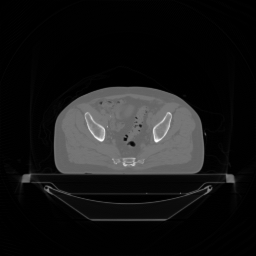} } 	\\[-2mm]	
	\subfloat[{\tt ADMM = 45.78}]{ \includegraphics[width=0.3\linewidth]{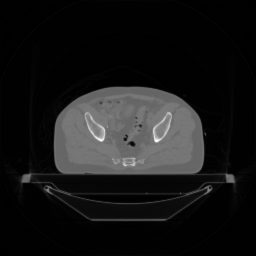} } 
	\subfloat[{\tt SAGA= 47.12}]{ \includegraphics[width=0.3\linewidth]{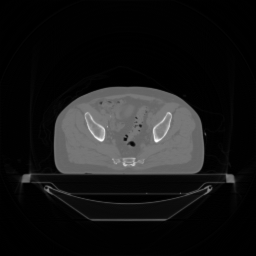} } 
	\subfloat[{\tt SARAH= 51.60}]{ \includegraphics[width=0.3\linewidth]{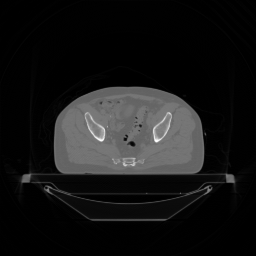} } 	\\[-2mm]	
    \caption{ Final reconstruction image of different methods for the 55th slice.}
    \label{3d3}
\end{figure}

\section{Conclusion}
\label{sec:conclude}
In this paper, we propose stochastic version ADMM which incorporates a class of variance reduction gradient estimators including SAGA and SARAH. 
Based on an MSE bound and KL property, we establish the convergence for the proposed algorithm. 
We demonstrate the performance of the proposed algorithm with SARAH, SAGA gradient estimators for three different problems: graphed-guided fused lasso, wavelet frame based 2D CT reconstruction and TV-$L_0$ for 3D CT reconstruction. 

\paragraph{Acknowledgement}
This work was supported by  NSFC (No.11771288) and  National key research and development program (No.2017YFB0202902). We thank the Student Innovation Center at Shanghai Jiao Tong University for providing us the computing services. Jingwei Liang was partly supported by Leverhulme Trust and Newton Trust.

\begin{small}
\bibliographystyle{plain}
\bibliography{bib}
\end{small}

\appendix

\section{Proofs of main theorems}
\label{app1}

\subsection{Properties of SAGA gradient approximation}


The MSE bound, geometric decay and convergence of the SAGA estimator have been established in \cite{DTLDS}, we present them here for the sake of convenience.

\begin{lemma}[MSE bound]\label{saga-mse}
The SAGA estimator satisfies
\begin{equation}\label{saga1}
\mathbb{E}_t \| \tnabla H(x_t) - \nabla H(x_t) \|^2 \leq \sfrac{1}{bn} \sum\nolimits_{i=1}^n \| \nabla H_i (x_t) - \nabla H_i (\varphi_t^i) \|^2,
\end{equation}
as well as
\begin{equation}\label{eqe1}
\mathbb{E}_t \| \tnabla H(x_t) - \nabla H(x_t) \| \leq \sfrac{1}{\sqrt{bn}} \sum\nolimits_{i=1}^n \| \nabla H_i (x_t) - \nabla H_i (\varphi_t^i) \|.
\end{equation}
\end{lemma}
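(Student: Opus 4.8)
The plan is to expand the SAGA error into a sum of conditionally i.i.d.\ mean-zero terms and then invoke Lemma \ref{lemma1}. Write $g_i \eqdef \nabla H_i(x_t) - \nabla H_i(\varphi_t^i)$ for $i = 1, \dots, n$ and $\bar g \eqdef \sfrac{1}{n}\sum_{i=1}^n g_i$. Using $|J_t| = b$ and $\nabla H(x_t) = \sfrac1n\sum_{i=1}^n \nabla H_i(x_t)$, the definition of $\tnablasaga H(x_t)$ gives
\[
\tnablasaga H(x_t) - \nabla H(x_t) = \sfrac1b \sum_{j \in J_t} g_j - \bar g = \sfrac1b \sum_{j\in J_t} \big( g_j - \bar g \big).
\]
Conditioning on the first $t$ iterations, the variables $x_t$ and $\varphi_t^i$ are fixed, and since each index in $J_t$ is drawn uniformly (and independently) from $\{1,\dots,n\}$, every summand $g_j - \bar g$ has conditional mean $\mathbb{E}_t[g_j] - \bar g = \bar g - \bar g = 0$.

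Next I would apply Lemma \ref{lemma1} to the $b$ independent, conditionally mean-zero vectors $\{g_j - \bar g\}_{j \in J_t}$, which yields
\[
\mathbb{E}_t \Big\| \sfrac1b \sum_{j\in J_t}(g_j - \bar g) \Big\|^2 = \sfrac{1}{b^2}\sum_{j\in J_t}\mathbb{E}_t\|g_j - \bar g\|^2 = \sfrac1b \, \mathbb{E}_t\| g_{j_1} - \bar g\|^2 ,
\]
where $j_1$ denotes a single uniform draw. Bounding the variance by the second moment, $\mathbb{E}_t\|g_{j_1} - \bar g\|^2 = \sfrac1n\sum_{i=1}^n \|g_i\|^2 - \|\bar g\|^2 \le \sfrac1n\sum_{i=1}^n\|g_i\|^2$, and substituting back gives exactly \eqref{saga1}.

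For the first-moment estimate \eqref{eqe1}, I would apply Jensen's inequality $\mathbb{E}_t\|X\| \le (\mathbb{E}_t\|X\|^2)^{1/2}$ with $X = \tnablasaga H(x_t) - \nabla H(x_t)$ together with \eqref{saga1}, obtaining $\mathbb{E}_t\|\tnablasaga H(x_t) - \nabla H(x_t)\| \le \big( \sfrac{1}{bn}\sum_{i=1}^n \|\nabla H_i(x_t) - \nabla H_i(\varphi_t^i)\|^2 \big)^{1/2}$, and then the elementary inequality $(\sum_i a_i^2)^{1/2} \le \sum_i a_i$ for nonnegative reals $a_i$ finishes the proof. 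There is no genuine obstacle here; the only points requiring care are the mean-zero claim, which rests on $\varphi_t^i$ being measurable with respect to the first $t$ iterations, and the independence of the indices in $J_t$ needed for Lemma \ref{lemma1} — sampling without replacement would only decrease the variance, so the stated bound remains valid in that case as well.
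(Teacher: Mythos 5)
Your proposal is correct and follows essentially the same route as the paper: both expand the SAGA error as $\sfrac{1}{b}\sum_{j\in J_t}\bigl(g_j-\bar g\bigr)$ with $g_i=\nabla H_i(x_t)-\nabla H_i(\varphi_t^i)$, use the mean-zero/variance computation for the uniformly drawn mini-batch (the paper cites $\mathbb{E}(\xi-\mathbb{E}\xi)^2=\mathbb{E}\xi^2$ loosely, you carry out the variance-bounded-by-second-moment step explicitly), and then obtain \eqref{eqe1} via Jensen's inequality followed by $(\sum_i a_i^2)^{1/2}\le\sum_i a_i$. Your added remarks on measurability of $\varphi_t^i$ and on sampling without replacement only tighten an argument the paper leaves implicit, so no changes are needed.
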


\begin{proof}
According to the definition of SAGA operator \eqref{SAGA}, we have
\begin{equation}\nonumber
\begin{aligned}
\mathbb{E}_t \| \tnabla H(x_t) - \nabla H(x_t) \|^2
&= \mathbb{E}_t \| \sfrac{1}{b} \msum_{j \in J_t} (\nabla H_j (x_t) - \nabla H_j (\varphi_t^{j}) ) + \sfrac{1}{n} \msum_{i=1}^n \nabla H_i (\varphi_t^i)  - \nabla H(x_t)\|^2\\
&= \sfrac{1}{b^2} \mathbb{E}_t \sum\nolimits_{j \in J_t} \| \nabla H_j(x_t) - \nabla H_j (\varphi_t^j) \|^2\\
&= \sfrac{1}{bn} \sum\nolimits_{i=1}^n \| \nabla H_i (x_t) - \nabla H_i (\varphi_t^i) \|^2,
\end{aligned}
\end{equation} 
where the second equality is obtained by $\mathbb{E} ( \xi - \mathbb{E}\xi)^2 = \mathbb{E} \xi^2$. By the Jensen inequality, we can say that
\begin{equation}\nonumber
\begin{aligned}
\mathbb{E}_t \| \tnabla H(x_t) - \nabla H(x_t) \| &\leq \sqrt{\mathbb{E}_t \| \tnabla H(x_t) - \nabla H(x_t) \|^2 }\\
&\leq \sfrac{1}{\sqrt{bn}} \sqrt{\msum_{i=1}^n \| \nabla H_i (x_t) - \nabla H_i (\varphi_t^i) \|^2}\\
&\leq \sfrac{1}{\sqrt{bn}} \sum\nolimits_{i=1}^n \| \nabla H_i (x_t) - \nabla H_i (\varphi_t^i) \|.
\end{aligned}
\end{equation}
Hence, we get \eqref{saga1} and \eqref{eqe1}.
\end{proof}

\begin{lemma}[Geometric decay]\label{saga-geo}
For the SAGA estimator, we define 
\begin{subequations}\label{saga-upsilon-gamma}
\begin{align}
&\Upsilon_{t+1} = \sfrac{1}{bn} \sum\nolimits_{i=1}^n \| \nabla H_i (x_{t+1}) - \nabla H_i (\varphi_{t+1}^i) \|^2, \nonumber\\
&\Gamma_{t+1} = \sfrac{1}{\sqrt{bn}} \sum\nolimits_{i=1}^n \| \nabla H_i (x_{t+1}) - \nabla H_i (\varphi_{t+1}^i) \|. \nonumber
\end{align}
\end{subequations}
Then the sequence $\{ \Upsilon_t \}_{t \geq 1}$ decays geometrically:
\begin{equation}\nonumber
\mathbb{E}_t{\Upsilon_{t+1}} \leq (1 - \rho) \Upsilon_t + V_{\Upsilon} \mathbb{E}_t \| x_{t+1} - x_t \|^2,
\end{equation}
where $\rho=  \sfrac{b}{n}$ and $V_{\Upsilon} = \frac{nL^2}{bn-1}$.
\end{lemma}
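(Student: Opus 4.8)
The plan is to condition on the first $t$ iterations of Algorithm~\ref{alg:SADMM}, so that $x_t$, $\varphi_t^i$ and $z_{t+1}$ are deterministic and the only randomness at step $t$ is the minibatch $J_t$. Since $J_t$ is drawn uniformly among size-$b$ subsets of $\{1,\dots,n\}$, any fixed memory index is refreshed with probability $P(i\in J_t)=\sfrac{b}{n}=\rho$, and the memory update gives $\varphi_{t+1}^i=x_t$ on the event $\{i\in J_t\}$ and $\varphi_{t+1}^i=\varphi_t^i$ on its complement. As these events are disjoint, I would first split, for every $i$,
\[
\|\nabla H_i(x_{t+1})-\nabla H_i(\varphi_{t+1}^i)\|^2 = \mathbf{1}_{\{i\in J_t\}}\|\nabla H_i(x_{t+1})-\nabla H_i(x_t)\|^2 + \mathbf{1}_{\{i\notin J_t\}}\|\nabla H_i(x_{t+1})-\nabla H_i(\varphi_t^i)\|^2 .
\]

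For the first (just-refreshed) term I would use the $L$-Lipschitz gradient bound of Assumption~\ref{assum} to get $\le L^2\|x_{t+1}-x_t\|^2$ and then drop the indicator. For the second (stale) term, whose reference point $\varphi_t^i$ is exactly the one carried in $\Upsilon_t$, I would insert $\nabla H_i(x_t)$ and use Young's inequality with a free parameter $\alpha>0$,
\[
\|\nabla H_i(x_{t+1})-\nabla H_i(\varphi_t^i)\|^2 \le (1+\alpha)\|\nabla H_i(x_t)-\nabla H_i(\varphi_t^i)\|^2 + (1+\sfrac{1}{\alpha})L^2\|x_{t+1}-x_t\|^2 ,
\]
so that, taking $\mathbb{E}_t$, the indicator $\mathbf{1}_{\{i\notin J_t\}}$ multiplying the deterministic stale gradient difference becomes the factor $1-\sfrac{b}{n}$. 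Summing over $i$ and dividing by $bn$ then produces a recursion of the shape $\mathbb{E}_t\Upsilon_{t+1}\le (1+\alpha)(1-\sfrac{b}{n})\,\Upsilon_t + c_\alpha L^2\,\mathbb{E}_t\|x_{t+1}-x_t\|^2$; a careful accounting of the constants — choosing $\alpha$ and using that exactly $b$ (resp.\ $n-b$) indices lie inside (resp.\ outside) $J_t$ — yields the stated $\rho=\sfrac{b}{n}$ and $V_\Upsilon=\sfrac{nL^2}{bn-1}$, mirroring the corresponding SAGA estimate of \cite{DTLDS}.

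The one place I expect trouble is that $x_{t+1}$ is \emph{not} independent of $J_t$: the estimator $\tnablasaga H(x_t)$ entering the update \eqref{sub2} is built from $J_t$, so one cannot factor $\mathbb{E}_t$ across products such as $\mathbf{1}_{\{i\in J_t\}}\|x_{t+1}-x_t\|^2$. This is dealt with by simply bounding the indicators by $1$ on the $\|x_{t+1}-x_t\|^2$ contributions before taking expectations, at the cost of a slightly larger constant in $V_\Upsilon$; after that the whole proof is the two elementary estimates above together with routine bookkeeping. (The companion quantity $\Gamma_t$ is defined only to state the first-moment bound \eqref{eqe1} and needs no decay statement of its own, but the same Lipschitz step controls it if required.)
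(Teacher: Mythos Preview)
Your approach is essentially the paper's: insert $\nabla H_i(x_t)$, apply Young's inequality with a free parameter, use the SAGA memory rule to extract the $(1-\tfrac{b}{n})$ factor on the stale part, and bound the fresh part by $L$-Lipschitzness. The only cosmetic difference is ordering --- the paper applies Young uniformly to all $i$ and then observes $\nabla H_i(x_t)-\nabla H_i(\varphi_{t+1}^i)=0$ for $i\in J_t$, whereas you split on $\{i\in J_t\}$ first and apply Young only to the stale indices; the paper's ordering sidesteps the $J_t$--$x_{t+1}$ dependence issue you flag (since after Young the $x_{t+1}$ term carries no indicator), but your fix of dropping the indicator is equally valid.
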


\begin{proof}
Now, we show that $\mathbb{E} \Upsilon_{t+1}$ is decreasing at a geometric rate by applying the inequality $\| a - c \|^2 \leq (1 + \delta) \| a - b \|^2 + (1 + \delta^{-1}) \| b - c \|^2$ twice.
\begin{equation}\nonumber
\begin{aligned}
\mathbb{E}_t \Upsilon_{t+1} 
&= \sfrac{1}{bn} \mathbb{E}_t \big[ \msum_{i=1}^n \| \nabla H_i (x_{t+1}) - \nabla H_i (\varphi_{t+1}^i) \|^2 \big]\\
&\leq \sfrac{1}{bn} \sum_{i=1}^n \mathbb{E}_t  \| \nabla H_i (x_{t+1}) - \nabla H_i (x_t) + \nabla H_i(x_t) - \nabla H_i (\varphi_{t+1}^i) \|^2\\
&\leq \sfrac{1 + \delta^{-1}}{bn}  \mathbb{E}_t  \sum_{i=1}^n \| \nabla H_i (x_{t+1}) - \nabla H_i (x_t) \|^2 + \sfrac{1 + \delta}{bn}  \mathbb{E}_t  \sum_{i=1}^n \| \nabla H_i (x_{t}) - \nabla H_i (\varphi_{t+1}^i) \|^2 \\
&\leq \sfrac{1 + \delta^{-1}}{bn}  \mathbb{E}_t  \sum_{i=1}^n \| \nabla H_i (x_{t+1}) - \nabla H_i (x_t) \|^2 + \sfrac{(1 + \delta)(1 - \sfrac{b}{n})}{bn} \sum_{i=1}^n \| \nabla H_i (x_{t}) - \nabla H_i (\varphi_{t}^i) \|^2 \\
&\leq \sfrac{(1 + \delta)(1 - \sfrac{b}{n})}{bn} \sum_{i=1}^n \| \nabla H_i (x_{t}) - \nabla H_i (\varphi_{t}^i) \|^2  + \sfrac{(1 + \delta^{-1}) L^2}{b}  \mathbb{E}_t   \| x_{t+1} - x_t \|^2.\\
\end{aligned}
\end{equation}
Choosing $\delta  = bn -1$, we have $\frac{(1 + \delta)(1 - \frac{b}{n})}{bn} = 1 - \frac{b}{n} <1$. Then we can get the geometric decay of $\{ \Upsilon_{t} \}_{t \geq 1}$:
\begin{equation}\nonumber
\mathbb{E}_t{\Upsilon_{t+1}} \leq (1 - \rho) \Upsilon_t + V_{r} \mathbb{E}_t \| x_{t+1} - x_t \|^2,
\end{equation}
where $\rho = \frac{b}{n}$ and $V_{\Upsilon}= \frac{nL^2}{bn-1}$. This completes the proof.
\end{proof}

\begin{lemma}[Convergence of estimator]\label{saga-zero}
If $\lim_{t \to \infty} \mathbb{E} \| x_t - x_{t-1}\| = 0,$ it follows that $\mathbb{E} \Upsilon_t \to 0$ and $\mathbb{E} \Gamma_t \to 0$.
\end{lemma}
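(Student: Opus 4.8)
The plan is to turn the geometric–decay estimate of Lemma~\ref{saga-geo} into a scalar recursion in full expectation and then apply a standard discrete Gronwall (or ``contraction plus vanishing forcing'') argument. Taking the total expectation on both sides of $\mathbb{E}_t\Upsilon_{t+1}\le(1-\rho)\Upsilon_t+V_\Upsilon\,\mathbb{E}_t\|x_{t+1}-x_t\|^2$ and using the tower property (note $\Upsilon_t$ depends only on the first $t$ iterations) gives
\[
\mathbb{E}\Upsilon_{t+1}\;\le\;(1-\rho)\,\mathbb{E}\Upsilon_t+V_\Upsilon\,\mathbb{E}\|x_{t+1}-x_t\|^2,\qquad t\ge 1 .
\]
Writing $a_t\eqdef\mathbb{E}\Upsilon_t\ge 0$ and $\epsilon_t\eqdef V_\Upsilon\,\mathbb{E}\|x_{t+1}-x_t\|^2\ge 0$, this is a recursion $a_{t+1}\le(1-\rho)a_t+\epsilon_t$ with contraction factor $1-\rho\in[0,1)$ and forcing term $\epsilon_t$. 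Under the hypothesis (in its second–moment form, $\mathbb{E}\|x_t-x_{t-1}\|^2\to 0$, matching Definition~\ref{var-redu}; see the remark below) we have $\epsilon_t\to 0$.

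Next I would execute the Gronwall step. Given $\delta>0$, choose $T$ so that $\epsilon_t\le\delta$ for all $t\ge T$; iterating the recursion from $T$ yields, for every $k\ge 0$,
\[
a_{T+k}\;\le\;(1-\rho)^k a_T+\delta\sum_{j=0}^{k-1}(1-\rho)^j\;\le\;(1-\rho)^k a_T+\tfrac{\delta}{\rho}.
\]
Letting $k\to\infty$ gives $\limsup_{t\to\infty}a_t\le\delta/\rho$, and since $\delta>0$ is arbitrary and $a_t\ge 0$ we conclude $\mathbb{E}\Upsilon_t\to 0$.

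For the first–moment quantity $\Gamma_t$ I would simply dominate it by $\sqrt{\Upsilon_t}$. By Cauchy--Schwarz, $\sum_{i=1}^n\|\nabla H_i(x_{t+1})-\nabla H_i(\varphi_{t+1}^i)\|\le\sqrt{n}\,\big(\sum_{i=1}^n\|\nabla H_i(x_{t+1})-\nabla H_i(\varphi_{t+1}^i)\|^2\big)^{1/2}$, which, after dividing by $\sqrt{bn}$, gives $\Gamma_{t+1}\le\sqrt{n\,\Upsilon_{t+1}}$. Applying Jensen's inequality (concavity of $\sqrt{\cdot}$) then yields $\mathbb{E}\Gamma_{t+1}\le\sqrt{n\,\mathbb{E}\Upsilon_{t+1}}\to 0$ by the first part, completing the proof.

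The only genuinely delicate point is a bookkeeping one: the statement of Lemma~\ref{saga-zero} hypothesizes $\mathbb{E}\|x_t-x_{t-1}\|\to 0$, whereas the recursion naturally involves the second moment $\mathbb{E}\|x_{t+1}-x_t\|^2$. The cleanest resolution is to read and use the hypothesis in the second–moment form (consistent with item~3 of Definition~\ref{var-redu}); alternatively, wherever this lemma is invoked the iterates $\{x_t\}$ are bounded, so $\|x_{t+1}-x_t\|^2\le D\,\|x_{t+1}-x_t\|$ for some $D>0$, and the first–moment convergence already forces $\mathbb{E}\|x_{t+1}-x_t\|^2\to 0$. Everything else in the argument is routine.
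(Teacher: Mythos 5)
Your proposal is correct, but it follows a different route from the paper's proof of this lemma. The paper does not invoke Lemma~\ref{saga-geo} at all here: it goes back to the SAGA memory structure, bounds $\sum_{i=1}^n \mathbb{E}\|\nabla H_i(x_t)-\nabla H_i(\varphi_t^i)\|^2$ via Lipschitzness and the update rule for $\varphi_t^i$, unrolls that recursion into a weighted sum $L^2 n(1+\sfrac{2n}{b})\sum_{l=1}^t(1-\sfrac{b}{2n})^{t-l}\mathbb{E}\|x_l-x_{l-1}\|^2$, and then repeats an analogous first-moment computation separately for $\Gamma_t$. You instead take the already-proved geometric decay of Lemma~\ref{saga-geo} as a black box, pass to total expectation, and resolve the scalar recursion $a_{t+1}\le(1-\rho)a_t+\epsilon_t$ with vanishing forcing; for $\Gamma_t$ you avoid a second recursion entirely by the domination $\Gamma_{t+1}\le\sqrt{n\,\Upsilon_{t+1}}$ (Cauchy--Schwarz) plus Jensen. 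This is in fact the same strategy the paper uses for the SARAH estimator in Lemma~\ref{sarah-zero}, so it is clearly in the spirit of the paper; it is shorter and avoids duplicating the SAGA-specific estimate, at the cost of leaning on Lemma~\ref{saga-geo} and of a crude extra factor $\sqrt{n}$ in the $\Gamma$ bound, which is harmless since only convergence to zero is needed (one should also note, as you implicitly do, that $\mathbb{E}\Upsilon_T<\infty$ so the recursion can be started). Your flag about the hypothesis being stated in first-moment form while the recursion needs $\mathbb{E}\|x_t-x_{t-1}\|^2\to 0$ is well taken: the paper's own proof silently switches to the second-moment form in its opening line, item~3 of Definition~\ref{var-redu} and the places where the lemma is invoked (after Theorem~\ref{decrease}, where $\sum_t\mathbb{E}\|x_{t+1}-x_t\|^2<\infty$) supply exactly the second-moment hypothesis, and your boundedness fallback also works under the assumptions of Theorem~\ref{convergence}. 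So there is no gap in your argument; it is a valid, somewhat cleaner alternative to the paper's computation.
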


\begin{proof}
From the conclusion in Lemma \ref{saga-geo}, we can see that if $\mathbb{E}\| x_t - x_{t-1}\|^2 \to 0$, then so do $\Upsilon_t$ and $\Gamma_t$. First, we show that $\sum_{i=1}^n \mathbb{E} \| \nabla H_i (x_t) - H_i (\varphi_t^i) \|^2 \to 0.$\\
\begin{equation}\nonumber
\begin{aligned}
\sum_{i=1}^n \mathbb{E}\| \nabla H_i(x_t) - \nabla H_i (\varphi_t^i) \|^2 
&\leq L^2 \sum_{i=1}^n \mathbb{E}\| x_t - \varphi_t^i \|^2\\
&\leq L^2 n(1+\sfrac{2n}{b}) \mathbb{E} \| x_t - x_{t-1} \|^2 + (1+\sfrac{b}{2n}) \sum_{i=1}^n \mathbb{E} \| x_{t-1} - \varphi_t^i \|^2\\
&\leq L^2n(1+ \sfrac{2n}{b}) \mathbb{E} \| x_t - x_{t-1} \|^2 + (1+\sfrac{b}{2n}) (1 - \sfrac{b}{n}) \sum_{i=1}^n \mathbb{E} \| x_{t-1} - \varphi_{t-1}^i \|^2\\
&\leq L^2n(1+ \sfrac{2n}{b}) \mathbb{E} \| x_t - x_{t-1} \|^2 + (1 - \sfrac{b}{2n}) \sum_{i=1}^n \mathbb{E} \| x_{t-1} - \varphi_{t-1}^i \|^2\\
&\leq L^2n(1+ \sfrac{2n}{b}) \sum_{l=1}^t (1 - \sfrac{b}{2n})^{t-l} \mathbb{E} \| x_l - x_{l-1} \|^2,
\end{aligned}
\end{equation}
where the third inequality is obtained by using the definition of $\varphi_t$ in Definition \ref{SAGA}. Furthermore, because $\| x_t - x_{t-1} \|^2 \to 0,$ it is clear that the bound on the right goes to zero as $t \to \infty.$ The fact that $\Gamma_t \to 0$ follows similarly:
\begin{equation}\nonumber
\begin{aligned}
\sum_{i=1}^n \mathbb{E} \| \nabla H_i (x_t) - \nabla H_i (\varphi_t^i) \| &\leq L \sum\nolimits_{i=1}^n \mathbb{E} \| x_t - \varphi_t^i \| \\
&\leq n L \| x_t - x_{t-1} \| + \sum\nolimits_{i=1}^n \mathbb{E} \| x_{t-1} - \varphi_t^i \| \\
&\leq n L \| x_t - x_{t-1} \| + (1 - \sfrac{b}{n}) \sum\nolimits_{i=1}^n \mathbb{E} \| x_{t-1} - \varphi_{t-1}^i \| \\
&\leq nL \sum\nolimits_{l=1}^t (1 - \sfrac{b}{n})^{t-l} \mathbb{E} \| x_t - x_{t-1} \|.
\end{aligned}
\end{equation}
As $\| x_t - x_{t-1} \| \to 0,$ it follows that the bound on the right goes to zero as $t \to \infty$, so $\Gamma_t \to 0$. 
\end{proof}

\subsection{Properties of SARAH gradient approximation}

%

\begin{lemma}[MSE bound]\label{sarah-mse}
The SARAH gradient estimator satisfies
\begin{equation}\nonumber
\mathbb{E}_t \| \tnabla H(x_t) - \nabla H(x_t) \|^2 \leq \Pa{ 1 -\sfrac{1}{p} }  \| \tnabla H(x_{t-1}) - \nabla H(x_{t-1}) \|^2 + \sfrac{L^2}{b} \| x_t - x_{t-1} \|^2,
\end{equation}
as well as
\begin{equation}\nonumber
\mathbb{E}_t \| \tnabla H(x_t) - \nabla H(x_t) \| 
\leq \sqrt{1 - \tfrac{1}{p}}  \| \tnabla H(x_{t-1} )- \nabla H(x_{t-1}) \| + \sfrac{L}{\sqrt{b}} \| x_t - x_{t-1} \|.
\end{equation}
\end{lemma}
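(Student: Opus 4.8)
The plan is to unfold the SARAH recursion once and apply a bias--variance decomposition. Write $e_t \eqdef \tnabla H(x_t) - \nabla H(x_t)$ for the estimator error, and recall that under $\mathbb{E}_t$ the vectors $x_t$, $x_{t-1}$ and $\tnabla H(x_{t-1})$ are deterministic, the only randomness being that of $p_t$ and $J_t$. If $p_t = 0$, which occurs with probability $1/p$, then $\tnabla H(x_t) = \nabla H(x_t)$ and $e_t = 0$. If $p_t = 1$, I would add and subtract $\nabla H(x_t) - \nabla H(x_{t-1})$ in the expression for $\tnablasarah H(x_t)$ in Definition~\ref{sarah} and use $\tnabla H(x_{t-1}) - \nabla H(x_{t-1}) = e_{t-1}$ to obtain
\begin{equation}\nonumber
e_t = \zeta_t + e_{t-1}, \qquad \zeta_t \eqdef \sfrac{1}{b}\sum_{j\in J_t}\bigl(\nabla H_j(x_t) - \nabla H_j(x_{t-1})\bigr) - \bigl(\nabla H(x_t) - \nabla H(x_{t-1})\bigr).
\end{equation}

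Since $J_t$ is drawn uniformly, $\mathbb{E}_t\bigl[\sfrac{1}{b}\sum_{j\in J_t}(\nabla H_j(x_t) - \nabla H_j(x_{t-1}))\bigr] = \sfrac{1}{n}\sum_{i=1}^n(\nabla H_i(x_t) - \nabla H_i(x_{t-1})) = \nabla H(x_t) - \nabla H(x_{t-1})$, so $\zeta_t$ is conditionally mean-zero on $\{p_t = 1\}$; as $e_{t-1}$ is constant the cross term drops and $\mathbb{E}_t[\|e_t\|^2 \mid p_t = 1] = \mathbb{E}_t\|\zeta_t\|^2 + \|e_{t-1}\|^2$. To bound $\mathbb{E}_t\|\zeta_t\|^2$ I would view $\zeta_t$ as the centered average of a size-$b$ sample of the vectors $\nabla H_i(x_t) - \nabla H_i(x_{t-1})$: in the with-replacement reading it is a sum of $b$ i.i.d.\ mean-zero terms and Lemma~\ref{lemma1} gives $\mathbb{E}_t\|\zeta_t\|^2 = \sfrac{1}{b}\bigl(\sfrac{1}{n}\sum_{i=1}^n\|\nabla H_i(x_t) - \nabla H_i(x_{t-1})\|^2 - \|\nabla H(x_t) - \nabla H(x_{t-1})\|^2\bigr) \le \sfrac{1}{bn}\sum_{i=1}^n\|\nabla H_i(x_t) - \nabla H_i(x_{t-1})\|^2$, and the without-replacement case only improves this by a finite-population factor $\le 1$. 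The $L$-Lipschitz bound on each $\nabla H_i$ from Assumption~\ref{assum} then yields $\mathbb{E}_t\|\zeta_t\|^2 \le \sfrac{L^2}{b}\|x_t - x_{t-1}\|^2$. Averaging over $p_t$,
\begin{equation}\nonumber
\mathbb{E}_t\|e_t\|^2 = \bigl(1 - \sfrac{1}{p}\bigr)\Bigl(\|e_{t-1}\|^2 + \sfrac{L^2}{b}\|x_t - x_{t-1}\|^2\Bigr) \le \bigl(1 - \sfrac{1}{p}\bigr)\|e_{t-1}\|^2 + \sfrac{L^2}{b}\|x_t - x_{t-1}\|^2,
\end{equation}
which is the claimed mean-square bound.

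For the non-squared estimate I would chain Jensen's inequality, $\mathbb{E}_t\|e_t\| \le \sqrt{\mathbb{E}_t\|e_t\|^2}$, with the bound just proved and the subadditivity $\sqrt{a+b} \le \sqrt{a} + \sqrt{b}$, giving $\mathbb{E}_t\|e_t\| \le \sqrt{1 - 1/p}\,\|e_{t-1}\| + \sfrac{L}{\sqrt{b}}\|x_t - x_{t-1}\|$. None of the steps is a genuine obstacle; the only point needing care is making the sampling-variance estimate for $\mathbb{E}_t\|\zeta_t\|^2$ precise --- being explicit about whether the mini-batch is drawn with or without replacement and invoking Lemma~\ref{lemma1} or the finite-population variance identity accordingly --- but both conventions lead to the same constant $L^2/b$.
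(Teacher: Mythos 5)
Your proposal is correct and matches the paper's proof in essence: both unfold the SARAH recursion one step, use the conditional unbiasedness of the mini-batch increment so the cross term with $e_{t-1}$ vanishes (the paper does this by expanding the square and computing the inner products, you by centering $\zeta_t$ first), bound the mini-batch variance by $\sfrac{1}{bn}\sum_i\|\nabla H_i(x_t)-\nabla H_i(x_{t-1})\|^2\le \sfrac{L^2}{b}\|x_t-x_{t-1}\|^2$, average over $p_t$, and finish the non-squared bound with Jensen plus subadditivity of the square root. Your explicit remark on with- versus without-replacement sampling is a minor refinement of the same argument, not a different route.
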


\begin{proof}
By using the definition of SARAH estimator in Definition \ref{sarah}, we have
\begin{equation}\nonumber
\begin{aligned}
\mathbb{E}_{t,p} \tnabla H(x_t) 
&= \mathbb{E}_{t,p} \big[ \sfrac{1}{b} \msum_{j\in J_t} \nabla H_j(x_t) - \nabla H_j(x_{t-1}) + \tnabla H(x_{t-1})  \big]\\
&= \nabla H(x_t) - \nabla H(x_{t-1}) + \tnabla H(x_{t-1}).
\end{aligned}
\end{equation}
In the following, we begin with a bound on $\mathbb{E}_{t,p} \| \tnabla H(x_t) - \nabla H(x_t) \|^2$.
\begin{equation}\nonumber
\begin{aligned}
&\mathbb{E}_{t,p} \| \tnabla H(x_t) - \nabla H(x_t) \|^2\\
&= \mathbb{E}_{t,p} \| \tnabla H(x_{t-1}) - \nabla H(x_{t-1}) + \nabla H(x_{t-1}) - \nabla H(x_t) + \tnabla H(x_t) - \tnabla H(x_{t-1}) \|^2\\
&=\| \tnabla H(x_{t-1}) - \nabla H(x_{t-1}) \|^2 + \| \nabla H(x_{t-1}) - \nabla H(x_t) \|^2 + \mathbb{E}_{t,p} \| \tnabla H(x_t) - \tnabla H(x_{t-1})\|^2\\
&\qquad + 2\langle \nabla H(x_{t-1}) - \tnabla H(x_{t-1}), \nabla H(x_t) - \nabla H(x_{t-1}) \rangle\\
&\qquad - 2 \langle \nabla H(x_{t-1}) - \tnabla H(x_{t-1}) , \mathbb{E}_{t,p} [\tnabla H(x_t) - \tnabla H(x_{t-1}) ] \rangle\\
&\qquad - 2\langle \nabla H(x_{t}) - \nabla H(x_{t-1}), \mathbb{E}_{t,p} [\tnabla H(x_t) - \tnabla H(x_{t-1}) ] \rangle.\\
\end{aligned}
\end{equation}
To simplify the inner product terms, we use the fact that
\begin{equation}\nonumber
\mathbb{E}_{t,p} [\tnabla H(x_t) - \tnabla H(x_{t-1}) ]  = \nabla H(x_t) - \nabla H(x_{t-1}).
\end{equation}
With this equality established, we see that the second inner product is equal to 
\begin{equation}\nonumber
2 \langle \nabla H(x_{t-1}) - \tnabla H(x_{t-1}) , \mathbb{E}_{t,p} [\tnabla H(x_t) - \tnabla H(x_{t-1}) ] \rangle = 2\langle \nabla H(x_{t-1}) - \tnabla H(x_{t-1}), \nabla H(x_t) - \nabla H(x_{t-1}) \rangle.
\end{equation}
The third inner product is equal to 
\begin{equation}\nonumber
2\langle \nabla H(x_{t}) - \nabla H(x_{t-1}), \mathbb{E}_{t,p} [\tnabla H(x_t) - \tnabla H(x_{t-1}) ] \rangle = 2 \| \nabla H(x_t) - \nabla H(x_{t-1}) \|^2.
\end{equation}
Consequently, we have
\begin{equation}\nonumber
\begin{aligned}
&\mathbb{E}_{t,p} \| \tnabla H(x_t) - \nabla H(x_t) \|^2\\
&\leq \| \tnabla H(x_{t-1}) - \nabla H(x_{t-1}) \|^2 - \| \nabla H(x_{t}) - \nabla H(x_{t-1}) \|^2 + \mathbb{E}_{t,p} \| \tnabla H(x_t) - \tnabla H(x_{t-1}) \|^2\\
&\leq \| \tnabla H(x_{t-1}) - \nabla H(x_{t-1}) \|^2  + \mathbb{E}_{t,p} \| \tnabla H(x_t) - \tnabla H(x_{t-1}) \|^2.\\
\end{aligned}
\end{equation}
We can bound the second term by computing the expectation,
\begin{equation}\nonumber
\begin{aligned}
\mathbb{E}_{t,p} \| \tnabla H(x_t) - \tnabla H(x_{t-1}) \|^2 
&= \mathbb{E}_{t,p} \| \sfrac{1}{b} \Pa{  \msum_{j \in J_t} \nabla H_j(x_t) - \nabla H_j(x_{t-1}) } \|^2\\
&\leq \sfrac{1}{b^2} \mathbb{E}_{t,p} \big[  \msum_{j \in J_t} \| \nabla H_j(x_t) - \nabla H_j(x_{t-1}) \|^2 \big]\\
&=\sfrac{1}{bn} \sum_{i=1}^n \| \nabla H_i (x_t) - \nabla H_i (x_{t-1}) \|^2.
\end{aligned}
\end{equation}
Then we have
\begin{equation}\nonumber
\begin{aligned}
\mathbb{E}_{t,p} \| \tnabla H(x_t) - \nabla H(x_t) \|^2
\leq \| \tnabla H(x_{t-1}) - \nabla H(x_{t-1}) \|^2 + \sfrac{1}{bn} \sum_{i=1}^n \| \nabla H_i (x_t) - \nabla H_i (x_{t-1}) \|^2.
\end{aligned}
\end{equation}
When the full gradient is computed, the MSE is equal to zero, so
\begin{equation}\nonumber
\begin{aligned}
\mathbb{E}_t \| \tnabla H(x_t) - \nabla H(x_t) \|^2
&\leq \Pa{ 1 - \sfrac{1}{p} } \Pa{ \| \tnabla H(x_{t-1} )- \nabla H(x_{t-1}) \|^2 + \sfrac{1}{bn} \msum_{i=1}^n \| \nabla H_i (x_t) - \nabla H_i (x_{t-1}) \|^2 } \\
&\leq \Pa{ 1 - \sfrac{1}{p} }  \| \tnabla H(x_{t-1} )- \nabla H(x_{t-1}) \|^2 + \sfrac{L^2}{b} \| x_t - x_{t-1} \|^2.
\end{aligned}
\end{equation}
Due to Jensen's inequality:
\begin{equation}\nonumber
\begin{aligned}
\mathbb{E}_t \| \tnabla H(x_t) - \nabla H(x_t) \| 
&\leq \sqrt{\mathbb{E}_t \| \tnabla H(x_t) - \nabla H(x_t) \|^2 }\\
&\leq \sqrt{ \pa{1 - \tfrac{1}{p}}  \| \tnabla H(x_{t-1} )- \nabla H(x_{t-1}) \|^2 + \sfrac{L^2}{b} \| x_t - x_{t-1} \|^2 }\\
&\leq \sqrt{1 - \tfrac{1}{p}}  \| \tnabla H(x_{t-1} )- \nabla H(x_{t-1}) \| + \sfrac{L}{\sqrt{b}} \| x_t - x_{t-1} \|.
\end{aligned}
\end{equation}
This completes the proof. 
\end{proof}

\begin{lemma}[Geometric decay]\label{sarah-geo}
For the SARAH gradient estimator, we define 
\begin{subequations}\nonumber
\begin{align}
&\Upsilon_{t+1} =  \| \tnabla H (x_{t}) - \nabla H (x_{t}) \|^2, \label{sarah-upsilon}\\
&\Gamma_{t+1} = \| \tnabla H (x_{t}) - \nabla H (x_{t}) \|. \label{sarah-gamma}
\end{align}
\end{subequations}
Then the sequence $\{ \Upsilon_t \}_{t \geq 1}$ decays geometrically:
\begin{equation}\label{sarah12}
\mathbb{E}_t{\Upsilon_{t+1}} \leq (1 - \rho) \Upsilon_t + V_{\Upsilon} \mathbb{E}_t \| x_{t} - x_{t-1} \|^2,
\end{equation}
where $\rho= \sfrac{1}{p}$ and $V_r = \sfrac{L^2}{b}$.
\end{lemma}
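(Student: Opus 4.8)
The plan is to obtain the geometric decay of $\{\Upsilon_t\}_{t\geq 1}$ as an immediate consequence of the MSE bound already established in Lemma~\ref{sarah-mse}. First I would simply unwind the definitions: by construction $\Upsilon_{t+1} = \| \tnabla H(x_t) - \nabla H(x_t) \|^2$ and $\Upsilon_t = \| \tnabla H(x_{t-1}) - \nabla H(x_{t-1}) \|^2$, so the left-hand side $\mathbb{E}_t \Upsilon_{t+1}$ is exactly $\mathbb{E}_t \| \tnabla H(x_t) - \nabla H(x_t) \|^2$. The first displayed inequality of Lemma~\ref{sarah-mse} then bounds this quantity by $\pa{1 - \sfrac{1}{p}} \| \tnabla H(x_{t-1}) - \nabla H(x_{t-1}) \|^2 + \sfrac{L^2}{b} \| x_t - x_{t-1} \|^2$; after identifying $\rho = \sfrac{1}{p}$, $V_{\Upsilon} = \sfrac{L^2}{b}$, and recognising the first term as $(1-\rho)\Upsilon_t$, this is precisely the claimed relation \eqref{sarah12}.

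The only point that needs a word of care is the conditional-expectation bookkeeping. Since $\mathbb{E}_t$ denotes expectation conditional on the first $t$ iterations of Algorithm~\ref{alg:SADMM}, both $x_t$ and $x_{t-1}$ — and hence $\| x_t - x_{t-1} \|^2$ — are measurable with respect to that $\sigma$-algebra, so $\mathbb{E}_t \| x_t - x_{t-1} \|^2 = \| x_t - x_{t-1} \|^2$ and the right-hand side may be written with or without the inner $\mathbb{E}_t$ without any change. For the base index $t=1$ one additionally notes that $\Upsilon_1 = \| \tnabla H(x_0) - \nabla H(x_0) \|^2 = 0$, because the SARAH estimator equals the full gradient at $t=0$, so the recursion is consistent there as well. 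I do not anticipate any genuine obstacle: this lemma is essentially a restatement of Lemma~\ref{sarah-mse} in the notation of Definition~\ref{var-redu}, and all the analytic work — the telescoping of the inner-product terms in the expansion of $\mathbb{E}_{t,p}\|\tnabla H(x_t)-\nabla H(x_t)\|^2$ and the application of the $L$-Lipschitz bound on each $\nabla H_i$ — has already been carried out in the proof of Lemma~\ref{sarah-mse}.
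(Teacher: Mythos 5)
Your proposal is correct and is exactly the paper's argument: the paper's proof consists of the single line that the lemma is a direct consequence of Lemma~\ref{sarah-mse}, which is precisely your unwinding of the definitions of $\Upsilon_{t+1}$ and $\Upsilon_t$ with $\rho = \sfrac{1}{p}$ and $V_{\Upsilon} = \sfrac{L^2}{b}$. Your extra remarks on the measurability of $\| x_t - x_{t-1} \|^2$ under $\mathbb{E}_t$ and the base case $\Upsilon_1 = 0$ are sound and only make the one-line deduction more explicit.
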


\begin{proof}
{This is a direct result of Lemma \ref{sarah-mse}.}
\end{proof}

\begin{lemma}[Convergence of estimator]\label{sarah-zero}
If $\lim_{t \to \infty} \mathbb{E} \| x_t - x_{t-1}\| = 0,$ it follows that $\mathbb{E} \Upsilon_t \to 0$ and $\mathbb{E} \Gamma_t \to 0$.
\end{lemma}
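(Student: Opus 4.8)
The plan is to treat this exactly as the SARAH analogue of Lemma~\ref{saga-zero}: reduce each of the two claims to a scalar one-step recursion with a geometric contraction factor, and then invoke an elementary fact about geometrically weighted averages of null sequences.

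First I would take full expectations in the estimates already established. From the $\Gamma$-part of Lemma~\ref{sarah-mse} and from the geometric decay \eqref{sarah12} of Lemma~\ref{sarah-geo}, using $\mathbb{E}[\,\cdot\,]=\mathbb{E}[\mathbb{E}_t[\,\cdot\,]]$ together with the identities $\Upsilon_t=\|\tnabla H(x_{t-1})-\nabla H(x_{t-1})\|^2$ and $\Gamma_t=\|\tnabla H(x_{t-1})-\nabla H(x_{t-1})\|$, one gets for all $t\geq1$
\begin{align}
\mathbb{E}\Upsilon_{t+1} &\leq \Pa{1-\sfrac{1}{p}}\mathbb{E}\Upsilon_t + \sfrac{L^2}{b}\,\mathbb{E}\|x_t-x_{t-1}\|^2, \nonumber\\
\mathbb{E}\Gamma_{t+1} &\leq \sqrt{1-\sfrac{1}{p}}\;\mathbb{E}\Gamma_t + \sfrac{L}{\sqrt{b}}\,\mathbb{E}\|x_t-x_{t-1}\| . \nonumber
\end{align}
Both are of the form $a_{t+1}\leq q\,a_t+c_t$ with $q\in[0,1)$ (namely $q=1-\sfrac{1}{p}$ and $q=\sqrt{1-\sfrac{1}{p}}$) and with a forcing term $c_t\to0$: for the $\Gamma$-recursion $c_t=\sfrac{L}{\sqrt{b}}\,\mathbb{E}\|x_t-x_{t-1}\|\to0$ by hypothesis, and for the $\Upsilon$-recursion $c_t=\sfrac{L^2}{b}\,\mathbb{E}\|x_t-x_{t-1}\|^2\to0$ — here I would use the hypothesis in its squared form, which is the form appearing in item~3 of Definition~\ref{var-redu} and which is in any case supplied by the finite-summability conclusion of Theorem~\ref{decrease}.

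Next I would unroll the recursion: $a_{t+1}\leq q^{t}a_1+\sum_{l=1}^{t}q^{t-l}c_l$, so it suffices to show $q^{t}a_1\to0$ (immediate since $q<1$; in fact $a_1=\mathbb{E}\Upsilon_1=0$ for SARAH) and $\sum_{l=1}^{t}q^{t-l}c_l\to0$. This last step is the only mildly non-routine point, and I would prove it by the standard splitting: given $\varepsilon>0$, choose $N$ with $c_l\leq\varepsilon$ for $l>N$; then $\sum_{l>N}q^{t-l}c_l\leq\varepsilon\sum_{j\geq0}q^{j}=\varepsilon/(1-q)$, while $\sum_{l\leq N}q^{t-l}c_l\leq Nq^{t-N}\max_{l\leq N}c_l\to0$ as $t\to\infty$; letting $\varepsilon\downarrow0$ gives the limit. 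Feeding this back into the two recursions yields $\mathbb{E}\Upsilon_t\to0$ and $\mathbb{E}\Gamma_t\to0$.

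I do not expect a genuine obstacle here: the substantive content is already contained in Lemmas~\ref{sarah-mse} and~\ref{sarah-geo}, and what remains is bookkeeping plus the elementary lemma on geometrically weighted null sequences. The only things to be careful about are the matching of first- versus second-moment hypotheses across the two claims, and remembering to pass to the full expectation before unrolling, since the one-step estimates are stated in conditional-expectation form.
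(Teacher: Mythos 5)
Your proposal is correct and follows essentially the paper's route: the paper likewise takes full expectations and unrolls the geometric recursion \eqref{sarah12} to bound $\mathbb{E}\Upsilon_t$ by $\sfrac{L^2}{b}\sum_{l=1}^t(1-\sfrac{1}{p})^{t-l}\mathbb{E}\|x_l-x_{l-1}\|^2$ (leaving implicit the splitting argument you spell out, and sharing the same silent passage from the first-moment hypothesis to its squared form). The only cosmetic difference is that the paper obtains $\mathbb{E}\Gamma_t\to 0$ from $\mathbb{E}\Upsilon_t\to 0$ via Jensen's inequality rather than running a separate recursion for $\mathbb{E}\Gamma_t$.
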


\begin{proof}
By \eqref{sarah12} we have
\begin{equation}\nonumber
\begin{aligned}
\mathbb{E} \Upsilon_t 
\leq (1 - \sfrac{1}{p}) \mathbb{E} \Upsilon_{t-1} + \sfrac{L^2}{b} \mathbb{E}  \| x_t - x_{t-1} \|^2 
\leq \sfrac{L^2}{b} \sum_{l=1}^t (1 - \sfrac{1}{p})^{t-l} \mathbb{E} \| x_l - x_{l-1} \|^2.
\end{aligned}
\end{equation}
It implies $\mathbb{E} \Upsilon_t \to 0.$  By Jensen's inequality, we have $\mathbb{E} \Gamma_t \to 0$ as well.
\end{proof}

\subsection{Proofs of main theorems}

Before giving the proof of Theorem \ref{decrease}, we first show that the sequence $\{ \Psi_t \}$ is bounded from below in the following lemma, which will be used in the proof of Theorem \ref{decrease}. Recall
\begin{equation}\label{not}
\begin{aligned}
&\Phi(x, z, u, x^{\prime}, u^{\prime}) \eqdef L_{\beta} (x, z, u) + \sfrac{4(1- \sigma)}{\sigma^2\beta\lambda_{m}} \| A^T ( u- u^{\prime} ) + \sigma B(x - x^{\prime})  \|^2 + \sfrac{8 (\sigma \tau + L)^2}{\sigma\beta\lambda_{m}} \| x - x^{\prime} \|^2\\
&\Phi_t \eqdef \Phi(x_t, z_t, u_t, x_{t-1}, u_{t-1}), ~~~~X_t = (x_t, z_t, u_t, x_{t-1}, u_{t-1}), ~~~~X^* = (x^*,z^*, u^*, x^*, u^*).
\end{aligned}
\end{equation}
{We also denote $\underline{\Phi} \eqdef \inf_{(x,z)} \{ F(z) + H(x) \} > -\infty.$}
\begin{lemma}\label{phi-bound}
Suppose that (a1) of Assumption \ref{assum} is satisfied and $\{x_t, z_t, u_t \}_{t \geq 0}$ is a sequence generated by Algorithm \ref{alg:SADMM}. Then the sequence $\{ \Psi_t \}_{t \geq 0}$ is bounded from below.
\end{lemma}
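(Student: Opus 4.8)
\emph{Proof plan.}
The aim is a uniform lower bound $\inf_{t\ge 0}\Psi_t>-\infty$. First I would discard the terms of $\Psi_t$ that are manifestly non‑negative: since $\sigma\in(0,1)$ we have $C_0\ge 0$, since $\rho\in(0,1]$ the coefficient of $\Upsilon_t$ is positive, and $\Upsilon_t=\sum_i\|v_t^i\|^2\ge 0$; hence it suffices to bound from below the smaller quantity
\[
\widehat\Psi_t\ \eqdef\ L_\beta(x_t,z_t,u_t)+C_0\bigl\|A^{T}(u_t-u_{t-1})+\sigma B(x_t-x_{t-1})\bigr\|^2+\tfrac{16}{\sigma\beta\lambda_m}\bigl\|\tnabla H(x_{t-1})-\nabla H(x_{t-1})\bigr\|^2+C_3\|x_t-x_{t-1}\|^2 ,
\]
and the target is $\widehat\Psi_t\ge\underline{\Phi}=\inf F+\inf H$.

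Next I would exploit two algebraic identities produced by the updates \eqref{sub2}--\eqref{sub3}. The multiplier step gives $Ax_t-z_t=\tfrac1{\sigma\beta}(u_t-u_{t-1})$, so writing $u_t=u_{t-1}+\sigma\beta(Ax_t-z_t)$ inside $L_\beta$ and completing the square in the vector $Ax_t-z_t$ yields $L_\beta(x_t,z_t,u_t)\ge\inf F+H(x_t)-\tfrac{1}{2(2\sigma+1)\beta}\|u_{t-1}\|^2$. Substituting $z_t=Ax_t-\tfrac1{\sigma\beta}(u_t-u_{t-1})$ into the $x$-step (written for the iteration producing $x_t$) and rearranging gives the key identity
\[
A^{T}(u_t-u_{t-1})+\sigma B(x_t-x_{t-1})\ =\ -\sigma\bigl(\tnabla H(x_{t-1})+A^{T}u_{t-1}\bigr),\qquad B=\tau\,\mathrm{Id}-\beta A^{T}A .
\]
Thus the Lyapunov correction term equals $C_0\sigma^2\|\tnabla H(x_{t-1})+A^{T}u_{t-1}\|^2$, and from $\|a+b\|^2\ge\tfrac12\|b\|^2-\|a\|^2$, the surjectivity of $A$ (so $\|A^{T}v\|^2\ge\lambda_m\|v\|^2$) and $C_0\sigma^2=\tfrac{4(1-\sigma)}{\beta\lambda_m}$, it is at least $\tfrac{2(1-\sigma)}{\beta}\|u_{t-1}\|^2-\tfrac{4(1-\sigma)}{\beta\lambda_m}\|\tnabla H(x_{t-1})\|^2$.

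Adding the two estimates, the net coefficient of $\|u_{t-1}\|^2$ is $\tfrac{2(1-\sigma)}{\beta}-\tfrac{1}{2(2\sigma+1)\beta}\ge 0$, because $4(1-\sigma)(2\sigma+1)\ge 1$ whenever $0<\sigma<\tfrac1{24\kappa(AA^T)}<1$; that contribution may therefore simply be dropped. It then remains to control $-\tfrac{4(1-\sigma)}{\beta\lambda_m}\|\tnabla H(x_{t-1})\|^2$. Splitting $\tnabla H(x_{t-1})=\nabla H(x_{t-1})+\bigl(\tnabla H-\nabla H\bigr)(x_{t-1})$ and using $\|\tnabla H(x_{t-1})\|^2\le 2\|\nabla H(x_{t-1})\|^2+2\|(\tnabla H-\nabla H)(x_{t-1})\|^2$, the stochastic-error part is absorbed by $\tfrac{16}{\sigma\beta\lambda_m}\|(\tnabla H-\nabla H)(x_{t-1})\|^2$ since $\tfrac{16}{\sigma}\ge 8(1-\sigma)$. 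For the term $\|\nabla H(x_{t-1})\|^2$ I would use the $L$-smoothness and lower-boundedness of $H$: the descent inequality gives $\|\nabla H(x)\|^2\le 2L(H(x)-\inf H)$, and a second use of $L$-smoothness gives $H(x_{t-1})-\inf H\le 2(H(x_t)-\inf H)+L\|x_t-x_{t-1}\|^2$, hence $\|\nabla H(x_{t-1})\|^2\le 4L(H(x_t)-\inf H)+2L^2\|x_t-x_{t-1}\|^2$. Inserting this, the coefficient of $H(x_t)$ becomes $1-\tfrac{32L(1-\sigma)}{\beta\lambda_m}$ and that of $\|x_t-x_{t-1}\|^2$ becomes $C_3-\tfrac{16L^2(1-\sigma)}{\beta\lambda_m}$; under the parameter ranges in force for Theorem~\ref{decrease} (in particular $\beta$ large enough, cf.\ Remark~\ref{para}) both are non-negative, and then $H(x_t)\ge\inf H$, $\|x_t-x_{t-1}\|^2\ge 0$ and $\|u_{t-1}\|^2\ge 0$ give $\widehat\Psi_t\ge\inf F+\inf H=\underline{\Phi}$.

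The main obstacle is the cross term $\langle u_t,Ax_t-z_t\rangle$ inside $L_\beta$, which a priori is not bounded from below; the whole point of the stability function is that the multiplier $u_{t-1}$ resurfaces---through the linearized $x$-step and the surjectivity of $A$---inside the correction $C_0\|A^{T}(u_t-u_{t-1})+\sigma B(x_t-x_{t-1})\|^2$ with a coefficient, calibrated by the precise value of $C_0$ and the smallness of $\sigma$, large enough to absorb that cross term. A secondary subtlety is that $\Psi_t$ carries $H(x_t)$ while the gradient estimate to be tamed sits at $x_{t-1}$, which forces the two-fold use of $L$-smoothness above and is exactly the reason $\Psi_t$ contains the auxiliary term $C_3\|x_t-x_{t-1}\|^2$.
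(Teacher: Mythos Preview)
Your argument is correct but follows a genuinely different route from the paper. The paper does \emph{not} attempt a pointwise bound on a single $\Psi_t$. Instead it first invokes the descent inequality \eqref{dec-relation} (already derived as \eqref{eq31} inside the proof of Theorem~\ref{decrease}), so that $\mathbb{E}\Psi_t$ is non-increasing, and then observes from $Ax_t-z_t=\tfrac1{\sigma\beta}(u_t-u_{t-1})$ and the polarization identity $\langle u_t,u_t-u_{t-1}\rangle=\tfrac12\|u_t\|^2+\tfrac12\|u_t-u_{t-1}\|^2-\tfrac12\|u_{t-1}\|^2$ that $\Psi_t-\underline\Phi\ge\tfrac1{2\sigma\beta}(\|u_t\|^2+\|u_t-u_{t-1}\|^2-\|u_{t-1}\|^2)$. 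Telescoping gives $\sum_{t=1}^T\mathbb{E}(\Psi_t-\underline\Phi)\ge -\tfrac1{2\sigma\beta}\|u_0\|^2$; if some $\mathbb{E}\Psi_{t_0}<\underline\Phi$, monotonicity would force this same sum to $-\infty$, a contradiction.

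The trade-offs are real. Your direct argument exploits the identity $A^T(u_t-u_{t-1})+\sigma B(x_t-x_{t-1})=-\sigma(\tnabla H(x_{t-1})+A^Tu_{t-1})$ and the calibrated size of $C_0$ to absorb the troublesome $\|u_{t-1}\|^2$---a nice structural explanation of why those particular correction terms sit in $\Psi_t$. It yields an almost-sure bound $\Psi_t\ge\underline\Phi$ and is logically independent of \eqref{dec-relation}, so it removes the apparent forward reference in the paper's presentation. The price is extra parameter constraints: you need $4(1-\sigma)(2\sigma+1)\ge1$, i.e.\ $\sigma\le\tfrac{1+\sqrt7}{4}$, and more restrictively $\beta\lambda_m\ge 32L(1-\sigma)$ to make the coefficient of $H(x_t)-\inf H$ non-negative. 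The latter is \emph{not} implied by the lemma's stated hypotheses, though it does follow from the regime of Remark~\ref{para}. The paper's telescoping proof, by contrast, places no lower bound on $\beta$ and works for all $\sigma\in(0,1)$ once \eqref{dec-relation} is available. So your approach is more self-contained and gives a stronger (pathwise) conclusion, while the paper's is cheaper on parameters but leans on the descent relation.
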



\begin{proof}
We show that $\underline{\Phi}$ is a lower bound of $\{ \Psi_t \}_{t \geq 1}$. Suppose by contradiction that there exists $t_0 \geq 1$ such that $\Psi_{t_0} - \underline{\Phi} < 0.$ According to \eqref{dec-relation}, for any $T \geq t_0$,
$$
\sum_{t = 1}^{T} \mathbb{E}(\Psi_t - \underline{\Phi}) \leq \sum_{t=1}^{t_0 - 1} \mathbb{E}(\Psi_t - \Phi) + (T - t_0 +1) (\Psi_{t_0} - \underline{\Phi}),
$$
which implies that
$$
\lim_{T \to \infty} \sum_{t= 1}^{T} \mathbb{E}(\Psi_t - \underline{\Phi}) = -\infty.
$$
On the other hand, for any $t \geq 1$ it holds that
\begin{equation}\nonumber
\begin{aligned}
\Psi_t - \underline{\Phi}&\geq F(z_t) + H(x_t) + \langle u_t, Ax_t - z_t \rangle - \underline{\Phi}\\
&\geq \langle u_t, Ax_t - z_t \rangle\\
&=\sfrac{1}{\sigma\beta} \langle u_t, u_t - u_{t-1} \rangle\\
&=\sfrac{1}{2\sigma\beta} \| u_t \|^2 + \sfrac{1}{2\sigma\beta} \| u_t - u_{t-1} \|^2 - \sfrac{1}{2\sigma\beta} \| u_{t-1} \|^2.
\end{aligned}
\end{equation}
Therefore, for any $T \geq 1$, we have
$$
\sum_{t= 1}^{T} \mathbb{E} (\Psi_t - \underline{\Phi}) \geq \sfrac{1}{2\sigma\beta}\sum_{t=1}^T \mathbb{E}\| u_t - u_{t-1} \|^2 + \sfrac{1}{2\sigma\beta} \| u_T\|^2 -  \sfrac{1}{2\sigma\beta} \| u_0\|^2 \geq  -\sfrac{1}{2\sigma\beta} \| u_0\|^2,
$$
which leads to a contradiction.
\end{proof}

\begin{proof}[Proof of Theorem \ref{decrease}]
From \eqref{sub1} we obtain
\begin{equation}\label{eq1}
F(z_{t+1}) + \langle u_t, Ax_t - z_{t+1} \rangle + \sfrac{\beta}{2} \| Ax_t - z_{t+1} \|^2 \leq F(z_t) + \langle u_t, Ax_t - z_t \rangle + \sfrac{\beta}{2} \| Ax_t - z_t \|^2.
\end{equation}
On the other hand, as the gradient of $H$ is Lipschitz continuous, we have
\begin{equation}\nonumber
H(x_{t+1}) \leq H(x_t) + \langle \nabla H(x_t), x_{t+1} - x_t \rangle + \sfrac{L}{2} \| x_{t+1} - x_t \|^2.
\end{equation}
By taking into consideration \eqref{sub2}, we get
\begin{equation}\label{eq2}
\begin{aligned}
H(x_{t+1}) 
&\leq H(x_t) + \langle \tnabla H(x_t), x_{t+1} - x_t \rangle + \sfrac{L}{2} \| x_{t+1} - x_t \|^2 +  \langle \nabla H(x_t) - \tnabla H(x_t), x_{t+1} - x_t \rangle \\
&\leq H(x_t) + \langle \tau(x_t - x_{t+1}) - A^{T} u_t + \beta A^{T}(Ax_t - z_{t+1}), x_{t+1} - x_t \rangle\\
&\qquad +\sfrac{L}{2} \| x_{t+1} - x_t \|^2 + \langle \nabla H(x_t) - \tnabla H(x_t), x_{t+1} - x_t \rangle\\
&\leq H(x_t) - \tau \| x_{t+1} - x_t \|^2 - \langle u_t, A(x_{t+1} - x_t) \rangle - \beta\langle Ax_t- z_{t+1}, Ax_{t+1} - Ax_t \rangle\\
&\qquad+\sfrac{L}{2} \| x_{t+1} - x_t \|^2 + \langle \nabla H(x_t) - \tnabla H(x_t), x_{t+1} - x_t \rangle\\
&\leq H(x_t) - \langle u_t, Ax_{t+1} - Ax_t \rangle +\sfrac{\beta}{2} \| Ax_t - z_{t+1} \|^2 - \sfrac{\beta}{2} \| Ax_{t+1} - z_{t+1} \|^2\\
&\qquad - (\tau - \sfrac{L +\beta \| A \|^2}{2}) \| x_{t+1} - x_t \|^2 + \langle \nabla H(x_t) - \tnabla H(x_t), x_{t+1} - x_t \rangle,\\
\end{aligned}
\end{equation}
where the last inequality is due to $\langle a, b \rangle = \sfrac{1}{2} ( \| a + b \|^2 - \| a \|^2 - \| b \|^2 )$. Rewrite \eqref{eq2}, we get
\begin{equation}\label{eq3}
\begin{aligned}
H(x_t) 
&\geq H(x_{t+1}) + \langle u_t, Ax_{t+1} - Ax_t \rangle - \sfrac{\beta}{2} \| Ax_t - z_{t+1} \|^2 + \sfrac{\beta}{2} \| Ax_{t+1} - z_{t+1} \|^2\\
&\qquad + (\tau - \sfrac{L +\beta \| A \|^2}{2}) \| x_{t+1} - x_t \|^2 - \langle \nabla H(x_t) - \tnabla H(x_t), x_{t+1} - x_t \rangle .
\end{aligned}
\end{equation}
Combining \eqref{eq1} and \eqref{eq3}, we get
\begin{equation}\nonumber
\begin{aligned}
&F(z_{t+1}) + H(x_{t+1}) + \langle u_t, Ax_{t+1} - z_{t+1} \rangle + \sfrac{\beta}{2} \| Ax_{t+1} - z_{t+1} \|^2\\
&\qquad + (\tau - \sfrac{L +\beta \| A \|^2}{2}) \| x_{t+1} - x_t \|^2 - \langle \nabla H(x_t) - \tnabla H(x_t), x_{t+1} - x_t \rangle\\
&\leq F(z_t) + H(x_t) + \langle u_t, Ax_t - z_t \rangle + \sfrac{\beta}{2} \| Ax_t - z_t \|^2.
\end{aligned}
\end{equation}
From the equation \eqref{sub3}, we have
\begin{equation}\nonumber
\begin{aligned}
&F(z_{t+1}) + H(x_{t+1}) + \langle u_{t+1}, Ax_{t+1} - z_{t+1} \rangle - \sigma\beta \| Ax_{t+1} - z_{t+1} \|^2 + \sfrac{\beta}{2} \| Ax_{t+1} - z_{t+1} \|^2\\
&\qquad + (\tau - \sfrac{L +\beta \| A \|^2}{2}) \| x_{t+1} - x_t \|^2 - \langle \nabla H(x_t) - \tnabla H(x_t), x_{t+1} - x_t \rangle\\
&\leq F(z_t) + H(x_t) + \langle u_t, Ax_t - z_t \rangle + \sfrac{\beta}{2} \| Ax_t - z_t \|^2.
\end{aligned}
\end{equation}
By adding $\sfrac{2}{\sigma\beta} \| u_{t+1} - u_t \|^2$, we have
\begin{equation}\label{eq7}
\begin{aligned}
&F(z_{t+1}) + H(x_{t+1}) + \langle u_{t+1}, Ax_{t+1} - z_{t+1} \rangle + \sfrac{\beta}{2} \| Ax_{t+1} - z_{t+1} \|^2\\
&\qquad+ (\tau - \sfrac{L +\beta \| A \|^2}{2}) \| x_{t+1} - x_t \|^2 + \sfrac{1}{\sigma\beta} \| u_{t+1} - u_{t} \|^2 - \langle \nabla H(x_t) - \tnabla H(x_t), x_{t+1} - x_t \rangle\\
&\leq F(z_t) + H(x_t) + \langle u_t, Ax_t - z_t \rangle + \sfrac{\beta}{2} \| Ax_t - z_t \|^2 + \sfrac{2}{\sigma\beta} \| u_{t+1} - u_{t} \|^2 .
\end{aligned}
\end{equation}
Next, we will focus on estimating $\| u_{t+1} - u_t \|^2$. We can rewrite \eqref{sub2} as 
\begin{equation}\nonumber
\begin{aligned}
\tau (x_t - x_{t+1})&=\tnabla  H(x_t) + A^{T} u_t + \beta A^{T} (Ax_{t+1} - z_{t+1}) + \beta A^T A(x_t - x_{t+1})\\
&= \tnabla  H(x_t) + A^{T} u_t + \sfrac{1}{\sigma} A^{T} (u_{t+1} - u_{t}) + \beta A^T A(x_t - x_{t+1})\\
\end{aligned}
\end{equation}
where the last equation is due to \eqref{sub3}. After multiplying both sides by $\sigma$ and rearranging the terms, we get
\[
A^T u_{t+1} + \sigma B(x_{t+1} - x_t) = (1 - \sigma) A^T u_t - \sigma \tnabla H(x_t) ,
\]
which is equivalent to 
\begin{equation}\nonumber
A^T u_{t+1} + \sigma B(x_{t+1} - x_t) = (1 - \sigma) A^T u_t - \sigma \nabla H(x_t) - \sigma \tnabla H(x_t) + \sigma \nabla H(x_t).
\end{equation}
Since $t$ is arbitrary, we also have
\begin{equation}\nonumber
A^T u_{t} + \sigma B(x_{t} - x_{t-1}) = (1 - \sigma) A^T u_{t-1} - \sigma \nabla H(x_{t-1}) - \sigma \tnabla H(x_{t-1}) + \sigma \nabla H(x_{t-1}).
\end{equation}
Subtracting these two identities yields
\begin{equation}\nonumber
\begin{aligned}
A^T( u_{t+1}  - u_t)+ \sigma B(x_{t+1} - x_t) = &(1 - \sigma)  A^T ( u_t - u_{t-1} ) + \sigma B(x_t - x_{t-1}) + \sigma \Pa{ \nabla H(x_{t-1}) - \nabla H(x_{t}) }\\
& - \sigma \Pa{ \tnabla  H(x_t) - \nabla H(x_{t})} +\sigma \Pa{ \tnabla  H(x_{t-1}) - \nabla H(x_{t-1}) }.
\end{aligned}
\end{equation}
Notice that $0 < \sigma < 1$ and $\| B \| \leq \tau$, by Cauchy inequality we have
\begin{equation}\label{eq14}
\begin{aligned}
&\| A^T( u_{t+1}  - u_t)+ \sigma B(x_{t+1} - x_t) \|^2 \leq (1 - \sigma) \| A^T ( u_t - u_{t-1} ) + \sigma B(x_t - x_{t-1}) \|^2\\
&\qquad + \sigma \Pa{ \| \sigma B(x_t - x_{t-1}) \| + \| \Pa{\nabla H(x_{t-1}) - \nabla H(x_{t})}  - \Pa{\tnabla  H(x_t) - \nabla H(x_{t})} + \Pa{ \tnabla  H(x_{t-1}) - \nabla H(x_{t-1}) }\| }^2\\
&\leq (1 - \sigma) \| A^T ( u_t - u_{t-1} ) + \sigma B(x_t - x_{t-1}) \|^2 + 2\sigma(\sigma\tau +L)^2 \| x_t - x_{t-1} \|^2 \\
&\qquad + 2 \sigma \| \Pa{ \tnabla  H(x_t) - \nabla H(x_{t})} - \Pa{ \tnabla  H(x_{t-1}) - \nabla H(x_{t-1}) } \|^2.
\end{aligned}
\end{equation}
On the other hand, since the linear operator $A$ is surjective, we have
$$
\sfrac{\lambda_{m}}{2} \| u_{t+1} - u_t \|^2 \leq \sfrac{1}{2} \| A^T (u_{t+1} - u_t) \|^2 \leq \| A^T( u_{t+1}  - u_t)+ \sigma B(x_{t+1} - x_t)  \|^2 + \sigma^2\tau^2 \| x_{t+1} - x_t \|^2.\\
$$
After combining this with \eqref{eq14}, we get
\begin{equation}\nonumber
\begin{aligned}
&\sfrac{\sigma\lambda_{m}}{2} \| u_{t+1} - u_t \|^2 + (1 - \sigma) \| A^T( u_{t+1}  - u_t)+ \sigma B(x_{t+1} - x_t)  \|^2 - \sigma^3\tau^2 \| x_{t+1} - x_t \|^2\\
&\leq (1- \sigma) \| A^T ( u_t - u_{t-1} ) + \sigma B(x_t - x_{t-1}) \|^2 +  2\sigma (\sigma \tau + L)^2 \| x_t - x_{t-1} \|^2\\
&\qquad +2 \sigma \| \Pa{ \tnabla  H(x_t) - \nabla H(x_{t}) } - \Pa{ \tnabla  H(x_{t-1}) - \nabla H(x_{t-1}) }  \|^2.
\end{aligned}
\end{equation}
Multiplying the above relation by $\sfrac{4}{\sigma^2 \beta \lambda_{m}} > 0$, we have
\begin{equation}\nonumber
\begin{aligned}
&\sfrac{2}{\sigma\beta} \| u_{t+1} - u_t \|^2 - \sfrac{4 \sigma \tau^2}{\beta \lambda_{m}} \| x_{t+1} - x_t \|^2 + \sfrac{4(1- \sigma)}{\sigma^2\beta\lambda_{m}} \| A^T( u_{t+1}  - u_t)+ \sigma B(x_{t+1} - x_t) \|^2\\
&\leq \sfrac{4(1- \sigma)}{\sigma^2\beta\lambda_{m}} \| A^T ( u_t - u_{t-1} ) + \sigma B(x_t - x_{t-1})  \|^2 + \sfrac{8 (\sigma \tau + L )^2}{\sigma\beta\lambda_{m}} \| x_t - x_{t-1} \|^2 \\
&\qquad + \sfrac{8}{\sigma\beta\lambda_{m}} \| \Pa{ \tnabla  H(x_t) - \nabla H(x_{t}) } - \Pa{ \tnabla  H(x_{t-1}) - \nabla H(x_{t-1}) }  \|^2.
\end{aligned}
\end{equation}
Adding the resulting inequality to \eqref{eq7} yields
\begin{equation}\nonumber
\begin{aligned}
&F(z_{t+1}) + H(x_{t+1}) + \langle u_{t+1}, Ax_{t+1} - z_{t+1} \rangle + \sfrac{\beta}{2} \| Ax_{t+1} - z_{t+1} \|^2\\
&\qquad + \sfrac{4(1- \sigma)}{\sigma^2\beta\lambda_{m}} \| A^T( u_{t+1}  - u_t)+ \sigma B(x_{t+1} - x_t) \|^2 + ( \tau - \sfrac{L + \beta \| A \|^2}{2}) \| x_{t+1} - x_t \|^2\\
&\qquad + \sfrac{1}{\sigma\beta} \| u_{t+1} - u_t \|^2- \langle \nabla H(x_t) - \tnabla  H(x_t), x_{t+1} - x_t \rangle  - \sfrac{4 \sigma \tau^2}{\beta \lambda_{m}} \| x_{t+1} - x_t \|^2 \\
&\leq F(z_t) + H(x_t) + \langle u_t, Ax_t - z_t \rangle + \sfrac{\beta}{2} \| Ax_{t} - z_{t} \|^2+ \sfrac{4(1- \sigma)}{\sigma^2\beta\lambda_{m}} \| A^T ( u_t - u_{t-1} ) + \sigma B(x_t - x_{t-1})  \|^2 \\
&\qquad + \sfrac{8 (\sigma \tau + L)^2}{\sigma\beta\lambda_{m}} \| x_{t} - x_{t-1} \|^2 + \sfrac{8}{\sigma\beta\lambda_{m}} \| \Pa{ \tnabla  H(x_t) - \nabla H(x_{t}) } - \Pa{ \tnabla  H(x_{t-1}) - \nabla H(x_{t-1}) } \|^2.
\end{aligned}
\end{equation}
It follows from Cauchy inequality that
\begin{equation}\nonumber
\begin{aligned}
&F(z_{t+1}) + H(x_{t+1}) + \langle u_{t+1}, Ax_{t+1} - z_{t+1} \rangle + \sfrac{\beta}{2} \| Ax_{t+1} - z_{t+1} \|^2 \\
&\qquad + \sfrac{4(1- \sigma)}{\sigma^2\beta\lambda_{m}} \| A^T( u_{t+1}  - u_t)+ \sigma B(x_{t+1} - x_t) \|^2 +  \sfrac{8 (\sigma \tau + L)^2}{\sigma\beta\lambda_{m}} \| x_{t+1} - x_{t} \|^2\\
&\qquad + ( \tau - \sfrac{L + \beta \| A \|^2}{2} - \sfrac{4 \sigma \tau^2}{\beta \lambda_{m}} -  \sfrac{8 (\sigma \tau + L)^2}{\sigma\beta\lambda_{m}}) \| x_{t+1} - x_t \|^2 + \sfrac{1}{\sigma\beta} \| u_{t+1} - u_t \|^2\\
&\leq F(z_t) + H(x_t) + \langle u_t, Ax_t - z_t \rangle + \sfrac{\beta}{2} \| Ax_{t} - z_{t} \|^2 + \sfrac{4(1- \sigma)}{\sigma^2\beta\lambda_{m}} \| A^T ( u_t - u_{t-1} ) + \sigma B(x_t - x_{t-1})  \|^2\\
&\qquad + \sfrac{8 (\sigma \tau + L)^2}{\sigma\beta\lambda_{m}} \| x_{t} - x_{t-1} \|^2 + \sfrac{8}{\sigma\beta\lambda_{m}} \| \Pa{ \tnabla  H(x_t) - \nabla H(x_{t}) } - \Pa{ \tnabla  H(x_{t-1}) - \nabla H(x_{t-1}) }  \|^2\\
&\qquad + \langle \nabla H(x_t) - \tnabla  H(x_t), x_{t+1} - x_t \rangle\\
&\leq F(z_t) + H(x_t) + \langle u_t, Ax_t - z_t \rangle + \sfrac{\beta}{2} \| Ax_{t} - z_{t} \|^2+ \sfrac{4(1- \sigma)}{\sigma^2\beta\lambda_{m}} \| A^T ( u_t - u_{t-1} ) + \sigma B(x_t - x_{t-1})  \|^2\\
&\qquad + \sfrac{8 (\sigma \tau + L)^2}{\sigma\beta\lambda_{m}} \| x_{t} - x_{t-1} \|^2 + \sfrac{16}{\sigma\beta\lambda_{m}} \| \tnabla  H(x_t) - \nabla H(x_{t}) \|^2 \\
&\qquad + \sfrac{16}{\sigma\beta\lambda_{m}}  \| \tnabla  H(x_{t-1}) - \nabla H(x_{t-1}) \|^2 + \sfrac{\eta}{2} \| \nabla H(x_t) - \tnabla  H(x_t)\|^2 + \sfrac{1}{2\eta} \| x_{t+1} - x_t \|^2,
\end{aligned}
\end{equation}
where $\eta > 0$ is a constant. Applying the conditional expectation operator $\mathbb{E}_t$, 
\begin{equation}\nonumber
\begin{aligned}
&\mathbb{E}_t \left[ \Phi_{t+1} + ( \tau - \sfrac{L + \beta \| A \|^2}{2} - \sfrac{4\sigma\tau^2}{\beta\lambda_{m}} - \sfrac{8(\sigma \tau + L)^2}{\sigma\beta\lambda_{m}} )\| x_{t+1} - x_t \|^2 + \sfrac{1}{\sigma\beta} \| u_{t+1} - u_t \|^2 \right]\\
&\leq \Phi_t + ( \sfrac{16}{\sigma\beta\lambda_{m}} + \sfrac{\eta}{2} ) \mathbb{E}_t \| \tnabla  H(x_t) - \nabla H(x_t) \|^2 +  \sfrac{16}{\sigma\beta\lambda_{m}}  \mathbb{E}_t \| \tnabla  H(x_{t-1}) - \nabla H(x_{t-1}) \|^2 \\
&\qquad + \sfrac{1}{2\eta} \mathbb{E}_t \| x_{t+1} - x_t \|^2.
\end{aligned}
\end{equation}
Adding $\sfrac{16}{\sigma\beta\lambda_{m}}  \mathbb{E}_t \| \tnabla  H(x_{t}) - \nabla H(x_{t}) \|^2 $ on the both sides,
\begin{equation}\label{ad1}
\begin{aligned}
&\mathbb{E}_t \big[ \Phi_{t+1} + \sfrac{16}{\sigma\beta\lambda_{m}}  \| \tnabla  H(x_{t}) - \nabla H(x_{t}) \|^2+ ( \tau - \sfrac{L + \beta \| A \|^2}{2} - \sfrac{4\sigma\tau^2}{\beta\lambda_{m}} - \sfrac{8(\sigma \tau + L)^2}{\sigma\beta\lambda_{m}})\| x_{t+1} - x_t \|^2 \\
&\qquad+ \sfrac{1}{\sigma\beta} \| u_{t+1} - u_t \|^2 \big]\\
&\leq \Phi_t +  \sfrac{16}{\sigma\beta\lambda_{m}}  \| \tnabla  H(x_{t-1}) - \nabla H(x_{t-1}) \|^2 + ( \sfrac{32}{\sigma\beta\lambda_{m}} + \sfrac{\eta}{2} ) \mathbb{E}_t \| \tnabla  H(x_t) - \nabla H(x_t) \|^2 + \sfrac{1}{2\eta} \mathbb{E}_t \| x_{t+1} - x_t \|^2.
\end{aligned}
\end{equation}
Simplifying the above inequality, 
\begin{equation}\nonumber
\begin{aligned}
&\mathbb{E}_t \big[  \Phi_{t+1} + \sfrac{16}{\sigma\beta\lambda_{m}}  \| \tnabla  H(x_{t}) - \nabla H(x_{t}) \|^2+ ( \tau - \sfrac{L + \beta \| A \|^2}{2} - \sfrac{4\sigma\tau^2}{\beta\lambda_{m}} - \sfrac{8(\sigma \tau + L)^2}{\sigma\beta\lambda_{m}} -  \sfrac{1}{2\eta} ) \| x_{t+1} - x_t \|^2 \\
&\qquad + \sfrac{1}{\sigma\beta} \| u_{t+1} - u_t \|^2 \big]\\
&\leq  \Phi_t +  \sfrac{16}{\sigma\beta\lambda_{m}}  \| \tnabla  H(x_{t-1}) - \nabla H(x_{t-1}) \|^2+ ( \sfrac{32}{\sigma\beta\lambda_{m}} + \sfrac{\eta}{2} ) \mathbb{E}_t \| \tnabla  H(x_t) - \nabla H(x_t) \|^2\\
&\leq  \Phi_t +  \sfrac{16}{\sigma\beta\lambda_{m}}  \| \tnabla  H(x_{t-1}) - \nabla H(x_{t-1}) \|^2+ ( \sfrac{32}{\sigma\beta\lambda_{m}} + \sfrac{\eta}{2} ) \Upsilon_{t} + ( \sfrac{32}{\sigma\beta\lambda_{m}} + \sfrac{\eta}{2} ) V_1 (\mathbb{E}_t \| x_{t+1} - x_t \|^2 + \| x_t - x_{t-1} \|^2).\\
\end{aligned}
\end{equation}
Then we have
\begin{equation}\label{add1}
\begin{aligned}
&\mathbb{E}_t \big[ \Phi_{t+1} + \sfrac{16}{\sigma\beta\lambda_{m}}  \| \tnabla  H(x_{t}) - \nabla H(x_{t}) \|^2 + \sfrac{1}{\sigma\beta} \| u_{t+1} - u_t \|^2 \\
&\qquad + \Pa{ \tau - \sfrac{L + \beta \| A \|^2}{2} - \sfrac{4\sigma\tau^2}{\beta\lambda_{m}} - \sfrac{8(\sigma \tau + L)^2}{\sigma\beta\lambda_{m}} -  \sfrac{1}{2\eta} - ( \sfrac{32}{\sigma\beta\lambda_{m}} + \sfrac{\eta}{2} ) V_1 } \| x_{t+1} - x_t \|^2 \big]\\
&\leq \Phi_t + \sfrac{16}{\sigma\beta\lambda_{m}}  \| \tnabla  H(x_{t-1}) - \nabla H(x_{t-1}) \|^2 + ( \sfrac{32}{\sigma\beta\lambda_{m}} + \sfrac{\eta}{2} ) \Upsilon_{t} + ( \sfrac{32}{\sigma\beta\lambda_{m}} + \sfrac{\eta}{2} ) V_1 \| x_t - x_{t-1} \|^2.
\end{aligned}
\end{equation}
Next using \eqref{var-geo}, we have
\begin{equation}\label{add2}
\begin{aligned}
( \sfrac{32}{\sigma\beta\lambda_{m}} + \sfrac{\eta}{2} ) \Upsilon_{t}
&\leq ( \sfrac{32}{\sigma\beta\lambda_{m}} + \sfrac{\eta}{2} ) \sfrac{1}{\rho} \bPa{ -\mathbb{E}_t \Upsilon_{t+1} + \Upsilon_t  + V_{\Upsilon} ( \mathbb{E}_t \| x_{t+1} - x_t \|^2 + \| x_t - x_{t-1} \|^2 ) }.
\end{aligned}
\end{equation}
Combining \eqref{add1} and \eqref{add2}, we have
\begin{equation}\nonumber
\begin{aligned}
&\mathbb{E}_t \big[ \Phi_{t+1} + \sfrac{16}{\sigma\beta\lambda_{m}}  \| \tnabla  H(x_{t}) - \nabla H(x_{t}) \|^2 + \sfrac{1}{\rho} ( \sfrac{32}{\sigma\beta\lambda_{m}} + \sfrac{\eta}{2} ) \Upsilon_{t+1} + \sfrac{1}{\sigma\beta} \| u_{t+1} - u_t \|^2 \\
&\qquad + \Pa{ \tau - \sfrac{L + \beta \| A \|^2}{2} - \sfrac{4\sigma\tau^2}{\beta\lambda_{m}} - \sfrac{8(\sigma \tau + L)^2}{\sigma\beta\lambda_{m}} -  \sfrac{1}{2\eta} - ( \sfrac{32}{\sigma\beta\lambda_{m}} + \sfrac{\eta}{2} ) (V_1 + \sfrac{V_{\Upsilon}}{\rho}) } \| x_{t+1} - x_t \|^2 \big]\\
&\leq \Phi_t + \sfrac{16}{\sigma\beta\lambda_{m}}  \| \tnabla  H(x_{t-1}) - \nabla H(x_{t-1}) \|^2 + \sfrac{1}{\rho}( \sfrac{32}{\sigma\beta\lambda_{m}} + \sfrac{\eta}{2} ) \Upsilon_{t} + ( \sfrac{32}{\sigma\beta\lambda_{m}} + \sfrac{\eta}{2} ) (V_1 + \sfrac{V_{\Upsilon}}{\rho})  \| x_{t} - x_{t-1} \|^2.\\
\end{aligned}
\end{equation}
This is eqivalent to 
\begin{equation}\nonumber
\begin{aligned}
&\mathbb{E}_t \big[ \Phi_{t+1} + \sfrac{16}{\sigma\beta\lambda_{m}}  \| \tnabla  H(x_{t}) - \nabla H(x_{t}) \|^2 + \sfrac{1}{\rho} ( \sfrac{32}{\sigma\beta\lambda_{m}} + \sfrac{\eta}{2} ) \Upsilon_{t+1}\\
&\qquad + \Pa{  ( \sfrac{32}{\sigma\beta\lambda_{m}} + \sfrac{\eta}{2} ) (V_1 + \sfrac{V_{\Upsilon}}{\rho}) + C_2 } \| x_{t+1} - x_t \|^2  + \sfrac{1}{\sigma\beta} \| u_{t+1} - u_t \|^2 + C_2 \| x_t - x_{t-1} \|^2\\
&\qquad + \Pa{ \tau - \sfrac{L + \beta \| A \|^2}{2} - \sfrac{4\sigma\tau^2}{\beta\lambda_{m}} - \sfrac{8(\sigma \tau + L)^2}{\sigma\beta\lambda_{m}} -  \sfrac{1}{2\eta} - (\sfrac{64}{\sigma\beta\lambda_{m}} + \eta ) (V_1 + \sfrac{V_{\Upsilon}}{\rho}) - C_2 } \| x_{t+1} - x_t \|^2 \big]\\
&\leq \Phi_t + \sfrac{16}{\sigma\beta\lambda_{m}}  \| \tnabla  H(x_{t-1}) - \nabla H(x_{t-1}) \|^2 + \sfrac{1}{\rho} ( \sfrac{32}{\sigma\beta\lambda_{m}} + \sfrac{\eta}{2} ) \Upsilon_{t}\\
&\qquad + \Pa{ ( \sfrac{32}{\sigma\beta\lambda_{m}} + \sfrac{\eta}{2} ) (V_1 + \sfrac{V_{\Upsilon}}{\rho}) + C_2 } \| x_{t} - x_{t-1} \|^2\\
\end{aligned}
\end{equation}
for some constant $C_2 \geq 0$. Then we have
\begin{equation}\label{eq31}
\mathbb{E}_t [ \Psi_{t+1} + \tilde{\eta}  \| x_{t+1} - x_t \|^2 + \sfrac{1}{\sigma\beta} \| u_{t+1} - u_t \|^2  + C_2 \| x_t - x_{t-1} \|^2 ] \leq \Psi_t.
\end{equation}
Thus \eqref{dec-relation} is  established.

Now we apply the full expectation operator to \eqref{eq31} and sum the resulting inequality from $t = 0$ to $t = T -1$, 
\begin{equation}\nonumber
\mathbb{E} \Psi_{T} + \tilde{\eta}  \sum_{t=0}^{T-1} \mathbb{E} \| x_{t+1} - x_t \|^2 + \sfrac{1}{\sigma\beta} \sum_{t=0}^{T-1} \mathbb{E} \| u_{t+1} - u_t \|^2 + C_2 \sum_{t=0}^{T-1} \mathbb{E} \| x_{t} - x_{t -1} \|^2 \leq \Psi_0.
\end{equation}
From Lemma \ref{phi-bound}, we know
\begin{equation}\nonumber
\tilde{\eta}  \sum_{t=0}^{T-1} \mathbb{E} \| x_{t+1} - x_t \|^2 + \sfrac{1}{\sigma\beta} \sum_{t=0}^{T-1} \mathbb{E} \| u_{t+1} - u_t \|^2 + C_2 \sum_{t=0}^{T-1} \mathbb{E} \| x_{t} - x_{t -1} \|^2 \leq \Psi_0 - \underline{\Phi}.
\end{equation}
Taking the limit $T \to \infty$, we have $\mathbb{E} \| x_{t+1} - x_t \| \to 0$ and $\mathbb{E} \| u_{t+1} - u_t \| \to 0$. Since for any $t \geq 1$ it holds that
\begin{equation}\label{eq33}
\| z_{t+1} - z_t \| \leq \| A \| \| x_{t+1} - x_t \| + \sfrac{1}{\sigma\beta} \| u_{t+1} - u_t \| + \sfrac{1}{\sigma\beta} \| u_t - u_{t-1} \|.
\end{equation}
Applying the full expectation operator to \eqref{eq33}, we have $\mathbb{E} \| z_{t+1} - z_t \| \to 0$. Therefore, we obtain the following conclusion
\begin{equation}\nonumber
\lim_{t \to \infty} \mathbb{E} \| x_{t+1} - x_t \| = \lim_{t \to \infty} \mathbb{E} \| u_{t+1} - u_t \| = \lim_{t \to \infty} \mathbb{E} \| z_{t+1} - z_t \| = 0.
\end{equation}
This completes the proof.
\end{proof}

Before proving Theorem \ref{convergence}, we first consider the upper estimates of subgradients of the function $\Phi$ at $(x_t, z_t, u_t, x_{t-1}, u_{t-1})$ for every $t \geq 1$ in the following lemma, where $\Phi$ is given in \eqref{not}.
\begin{lemma}\label{subgrad-bound}
Suppose that  $2\tau \geq \beta \|A\|^2$ and Assumption \ref{assum} is  satisfied. Let $\{ (x_t, z_t, u_t)_{t\geq 0}\}$ be a sequence generated by Algorithm \ref{alg:SADMM} and denote $(\partial\Phi_x^t, \partial\Phi_z^t, \partial\Phi_u^t, \partial\Phi_{x^{\prime}}^t, \partial\Phi_{u^{\prime}}^t) \in \partial \Phi(X_t)$. Then there exists a positive constant $p$ such that
\begin{equation}\nonumber
\mathbb{E}_{t-1} \| (\partial\Phi_x^t, \partial\Phi_z^t, \partial\Phi_u^t, \partial\Phi_{x^{\prime}}^t, \partial\Phi_{u^{\prime}}^t) \| \leq p(\mathbb{E}_{t-1} \| u_t - u_{t-1} \| + \mathbb{E}_{t-1} \| x_t - x_{t-1} \| + \| x_{t-1} - x_{t-2} \| ) + \Gamma_{t-1}
\end{equation}
for any $t \geq 1$. 
\end{lemma}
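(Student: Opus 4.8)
The plan is to exhibit one particular element of $\partial\Phi(X_t)$ block by block---one entry per argument of $\Phi$---and then bound it using the first-order optimality conditions of the three subproblems of Algorithm~\ref{alg:SADMM} read at step $t-1$. Set $w_t \eqdef A^{T}(u_t - u_{t-1}) + \sigma B(x_t - x_{t-1})$, and let $c_1 \eqdef \sfrac{4(1-\sigma)}{\sigma^2\beta\lambda_m}$ and $c_2 \eqdef \sfrac{8(\sigma\tau+L)^2}{\sigma\beta\lambda_m}$ be the coefficients appearing in the definition \eqref{not} of $\Phi$; recall $L_\beta(x,z,u) = F(z) + H(x) + \langle u, Ax - z\rangle + \sfrac{\beta}{2}\|Ax - z\|^2$ and that $B = \tau Id - \beta A^{T}A$ is symmetric with $\|B\| \leq \tau$ since $2\tau \geq \beta\|A\|^2$. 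Because $\Phi$ is a smooth function plus $F(z)$, its subdifferential at $X_t$ is single-valued in every slot except the $z$-slot, and for that slot I would select the subgradient $u_{t-1} + \beta(Ax_{t-1} - z_t) \in \partial F(z_t)$ furnished by the optimality condition of \eqref{sub1} read at step $t-1$. Straightforward differentiation then gives
\begin{align*}
\partial\Phi_z^t &= (u_{t-1} - u_t) + \beta A(x_{t-1} - x_t), \\
\partial\Phi_u^t &= (Ax_t - z_t) + 2c_1 A w_t, \qquad \partial\Phi_{u'}^t = -2c_1 A w_t, \\
\partial\Phi_{x'}^t &= -2\sigma c_1 B w_t - 2c_2(x_t - x_{t-1}), \\
\partial\Phi_x^t &= \nabla H(x_t) + A^{T}u_t + \beta A^{T}(Ax_t - z_t) + 2\sigma c_1 B w_t + 2c_2(x_t - x_{t-1}).
\end{align*}

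Next I would eliminate the residual quantities via the remaining two subproblems at step $t-1$: \eqref{sub3} gives $Ax_t - z_t = \sfrac{1}{\sigma\beta}(u_t - u_{t-1})$, and \eqref{sub2} gives $A^{T}u_{t-1} + \beta A^{T}(Ax_{t-1} - z_t) = \tau(x_{t-1} - x_t) - \tnabla H(x_{t-1})$. Writing $A^{T}u_t + \beta A^{T}(Ax_t - z_t)$ as this step-$(t-1)$ value plus $A^{T}(u_t - u_{t-1}) + \beta A^{T}A(x_t - x_{t-1})$ and substituting, the term $\nabla H(x_t)$ in $\partial\Phi_x^t$ becomes $\nabla H(x_t) - \tnabla H(x_{t-1}) = \Pa{\nabla H(x_t) - \nabla H(x_{t-1})} + \Pa{\nabla H(x_{t-1}) - \tnabla H(x_{t-1})}$, whose first summand is bounded by $L\|x_t - x_{t-1}\|$ by Assumption~\ref{assum}. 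After these substitutions every term other than the estimator error is linear in $u_t - u_{t-1}$, $x_t - x_{t-1}$ and $w_t$, and $\|w_t\| \leq \|A\|\,\|u_t - u_{t-1}\| + \sigma\tau\,\|x_t - x_{t-1}\|$; hence the triangle inequality produces a deterministic constant $c$, depending only on $\|A\|, \beta, \sigma, \tau, L, \lambda_m$, with
\[
\|(\partial\Phi_x^t, \partial\Phi_z^t, \partial\Phi_u^t, \partial\Phi_{x'}^t, \partial\Phi_{u'}^t)\| \leq c\Pa{\|u_t - u_{t-1}\| + \|x_t - x_{t-1}\|} + \|\tnabla H(x_{t-1}) - \nabla H(x_{t-1})\|.
\]

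Finally I would apply $\mathbb{E}_{t-1}$ to this bound and invoke the MSE estimate \eqref{var-mse1} at step $t-1$, namely $\mathbb{E}_{t-1}\|\tnabla H(x_{t-1}) - \nabla H(x_{t-1})\| \leq \Gamma_{t-1} + V_2\Pa{\mathbb{E}_{t-1}\|x_t - x_{t-1}\| + \|x_{t-1} - x_{t-2}\|}$, which is legitimate because $\Gamma_{t-1}$ and $x_{t-1} - x_{t-2}$ are determined by the first $t-1$ iterations; merging $c$ and $V_2$ into a single constant $p$ yields the claim. The only genuinely delicate point is this stochastic bookkeeping: since \eqref{sub2} carries $\tnabla H(x_{t-1})$ rather than $\nabla H(x_{t-1})$, the estimator error is unavoidably forced into the subgradient bound, so the conditioning must be taken at level $t-1$ (not $t$) for the MSE bound to apply to an already measurable $\Gamma_{t-1}$; everything else is routine estimation relying on Lipschitzness of $\nabla H$, the bound $\|B\| \leq \tau$, and surjectivity of $A$, which enters only through $\lambda_m > 0$ in the constants $c_1, c_2$.
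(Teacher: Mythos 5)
Your proposal is correct and follows essentially the same route as the paper: compute the block subgradients of $\Phi$ (choosing the $z$-slot element $u_{t-1}+\beta(Ax_{t-1}-z_t)\in\partial F(z_t)$ from \eqref{sub1}), eliminate $A^{T}u_t+\beta A^{T}(Ax_t-z_t)$ via \eqref{sub2}--\eqref{sub3} at step $t-1$ so that only $\nabla H(x_t)-\tnabla H(x_{t-1})$ plus terms linear in the increments remain, bound everything with $L$-Lipschitzness and $\|B\|\le\tau$, and finish with $\mathbb{E}_{t-1}$ and the MSE bound \eqref{var-mse1} shifted to $t-1$, absorbing $V_2$ into $p$. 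The only difference is cosmetic (you substitute the $x$-update directly rather than first deriving the recursion for $A^{T}u_t$ as the paper does), so no further comment is needed.
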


\begin{proof}
From the definition of $\Phi$, for any $t \geq 1$, we have
\begin{subequations}\label{eqe18}
\begin{align}
&\partial\Phi_{x}^t = \nabla H(x_t) + A^{T} u_t + \beta A^{T} (Ax_t - z_t) + 2C_1(x_t - x_{t-1}) + 2 \sigma C_0 B^{T} (A^{T} (u_t - u_{t-1}) + \sigma B(x_t - x_{t-1})),\label{eq36}\\
&\partial\Phi_{z}^t = u_{t-1} - u_t + \beta A(x_{t-1} - x_t),\label{eq37}\\
&\partial\Phi_{u}^t = Ax_t - z_t + 2C_0 A(A^T (u_t - u_{t-1}) + \sigma B(x_t - x_{t-1})),\label{eq38}\\
&\partial\Phi_{x^{\prime}}^t = -2C_0\sigma B^T(A^T (u_t - u_{t-1}) + \sigma B(x_t - x_{t-1})) - 2C_1 (x_t - x_{t-1}),\label{eq39}\\
&\partial\Phi_{u^{\prime}}^t = -2C_0A(A^T (u_t - u_{t-1}) + \sigma B(x_t - x_{t-1})).\label{eq40}
\end{align}
\end{subequations}
By \eqref{sub2} and \eqref{sub3} we have
\begin{equation}\nonumber
\begin{aligned}
A^T u_{t+1} &= A^T u_t + \sigma \beta A^T (Ax_{t+1} - z_{t+1})\\
&=A^T u_t + (\sigma - 1) \beta A^T (Ax_{t+1} - z_{t+1}) + \beta A^T (Ax_{t+1} - z_{t+1})\\
& = (1 - \sfrac{1}{\sigma}) A^T (u_{t+1} - u_t) + A^T u_t  + \beta A^T (Ax_{t} - z_{t+1}) + \beta A^T A(x_{t+1} - x_{t})\\
& = (1 - \sfrac{1}{\sigma}) A^T (u_{t+1} - u_t) + \tau (x_t - x_{t+1}) - \tnabla  H(x_t) + \beta A^T A(x_{t+1} - x_{t})\\
& = (1 - \sfrac{1}{\sigma}) A^T (u_{t+1} - u_t) + B(x_t - x_{t+1}) - \tnabla  H(x_t). \\
\end{aligned}
\end{equation}
Then it follows from \eqref{eq36} that we have
\begin{equation}\nonumber
\begin{aligned}
\partial\Phi_{x}^t &= \nabla H(x_t) - \tnabla  H(x_{t-1}) + (1 - \sfrac{1}{\sigma}) A^T (u_t - u_{t-1}) + B(x_{t-1} - x_t) + \sfrac{1}{\sigma} A^T(u_t - u_{t-1})\\
& \qquad + 2 C_1 (x_t - x_{t-1}) + 2\sigma C_0 B^T(A^T(u_t - u_{t-1}) + \sigma B(x_t - x_{t-1}))\\
&= \nabla H(x_t) - \tnabla  H(x_{t-1}) + \Pa{ (1 + 2\sigma C_0 B^T)A^T} (u_t - u_{t-1}) + ( 2\sigma^2 C_0 B^TB - B + 2C_1)  (x_t - x_{t-1}).
\end{aligned}
\end{equation}
Hence, we get 
\begin{equation}\nonumber
\begin{aligned}
\| \partial\Phi_{x}^t  \| &\leq \| \nabla H(x_t) - \tnabla  H(x_{t-1})\| + (2C_1 +  2\sigma^2 C_0 \tau^2 + \tau) \| x_t - x_{t-1} \| + \left( 1 + 2\sigma\tau C_0 \right) \| A\| \| u_t - u_{t-1} \|\\
&\leq \| \nabla H(x_t) - \nabla H(x_{t-1})\| + \| \nabla H(x_{t-1}) - \tnabla  H(x_{t-1})\| \\
&\qquad+ (2C_1 +  2\sigma^2 C_0 \tau^2 + \tau) \| x_t - x_{t-1} \| + \left( 1 + 2\sigma\tau C_0 \right) \| A\| \| u_t - u_{t-1} \|\\
&\leq \| \nabla H(x_{t-1}) - \tnabla  H(x_{t-1})\| +  (L+ 2C_1 +  2\sigma^2 C_0 \tau^2 + \tau) \| x_t - x_{t-1} \| + \left( 1 + 2\sigma\tau C_0 \right) \| A\| \| u_t - u_{t-1} \|.\\
\end{aligned}
\end{equation}
From \eqref{eq37}, \eqref{eq38}, \eqref{eq39} and \eqref{eq40}, we easily get the following estimate :
$$
\| \partial\Phi_{z}^t \| \leq \| u_t - u_{t-1} \| + \beta \|A\| \| x_t - x_{t-1} \|,
$$
\begin{equation}\nonumber
\begin{aligned}
\| \partial\Phi_{u}^t \| &\leq \|Ax_t - z_t \| + 2C_0 \| A\|^2 \| u_t -u_{t-1} \| + 2\sigma \tau C_0 \| A\| \| x_t - x_{t-1} \|\\
&\leq (\sfrac{1}{\sigma\beta} +2C_0 \| A\| ^2 )\| u_t -u_{t-1} \| + 2\sigma \tau C_0 \| A\| \| x_t - x_{t-1} \|,
\end{aligned}
\end{equation}
\begin{equation}\nonumber
\| \partial\Phi_{x^{\prime}}^t \| \leq  (2\sigma^2C_0\tau^2 + 2C_1) \| x_t - x_{t-1} \| + 2\sigma C_0 \tau \|A\| \| u_t - u_{t-1}\|,
\end{equation}
\begin{equation}\nonumber
\| \partial\Phi_{u^{\prime}}^t \| \leq 2C_0 \| A\|^2 \| u_t - u_{t-1} \| + 2C_0 \sigma \tau \|A\| \| x_t - x_{t-1} \|.
\end{equation}
Therefore, we have
\begin{equation}\label{eq48}
\begin{aligned}
&\mathbb{E}_{t-1} \| (\partial\Phi_{x}^t, \partial\Phi_{z}^t, \partial\Phi_{u}^t, \partial\Phi_{x^{\prime}}^t, \partial\Phi_{u^{\prime}}^t) \| \\
&\leq \mathbb{E}_{t-1} [\| \partial\Phi_{x}^t \| + \|\partial\Phi_{z}^t \| + \| \partial\Phi_{u}^t \| + \| \partial\Phi_{x^{\prime}}^t \| + \| \partial\Phi_{u^{\prime}}^t) \| ]\\
&\leq p \left( \mathbb{E}_{t-1} \| u_t -u_{t-1} \| + \mathbb{E}_{t-1} \| x_t - x_{t-1} \| \right) + \mathbb{E}_{t-1} \| \nabla H(x_{t-1}) - \tnabla  H(x_{t-1}) \| \\
&\leq p \left( \mathbb{E}_{t-1} \| u_t -u_{t-1} \| + \mathbb{E}_{t-1} \| x_t - x_{t-1} \| + \| x_{t-1} - x_{t-2} \| \right) + \Gamma_{t-1},
\end{aligned}
\end{equation}
where $p = \max\{ L + 4C_1 + 4\sigma\tau C_0 (\sigma\tau + \| A \|) + \tau + \beta\|A \| + V_2,~1 + \sfrac{1}{\sigma\beta} + 4C_0\| A\| (\sigma\tau + \| A\| ) + (\sfrac{2}{\sigma} - 1) \| A\| \}.$ 
\end{proof}

We define $\Omega \eqdef \Omega (\{ X_t \}_{t \geq 1})$ is the set of cluster points of $\{ X_t \}_{t \geq 1}$, which is nonempty due to the boundedness of $\{ X_t \}_{t \geq 1}.$ The following lemma will introduce some properties of $\Omega$.

\begin{lemma}\label{lemma3}
The following statements are true:
\begin{itemize}
\item[(i)] $\sum_{t=1}^{\infty} \| X_t - X_{t-1} \|^2 \leq \infty~~a.s. $ and $\| X_t - X_{t-1} \| \to 0~~a.s.$;

\item[(ii)] $\mathbb{E} \Phi(X_t) \to \Phi^{*}$, where $\Phi^{*} \in [\underline{\Phi}, \infty)$;

\item[(iii)] $\mathbb{E} \dist(0, \partial \Phi(X_t)) \to 0$;

\item[(iv)] The set $\Omega$ is non-empty, and for all $X^* \in \Omega$, $\mathbb{E}\dist(0, \partial \Phi(X^*)) = 0$;

\item[(v)] $\dist(X_t, \Omega) \to 0~a.s.$;

\item[(vi)] $\Omega$ is $a.s.$ compact and connected;

\item[(vii)] $\mathbb{E} \Phi(X^*) = \Phi^*$ for all $X^* \in \Omega$.
\end{itemize}
\end{lemma}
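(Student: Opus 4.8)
I would prove the seven items in a bootstrapping order, with Theorem~\ref{decrease} and Lemma~\ref{subgrad-bound} as the two engines, then port the deterministic KL machinery of \cite{BST,ABS} to the stochastic setting. First apply the Supermartingale Convergence Theorem (Lemma~\ref{super}) to \eqref{dec-relation} with $X_t:=\Psi_t-\underline{\Phi}\ge 0$ (non-negativity by Lemma~\ref{phi-bound}) and $Y_t:=\tilde\eta\|x_{t+1}-x_t\|^2+C_2\|x_t-x_{t-1}\|^2+\sfrac{1}{\sigma\beta}\|u_{t+1}-u_t\|^2\ge 0$; this gives $\sum_t Y_t<\infty$ a.s.\ and $\Psi_t$ converges a.s. Combining with \eqref{eq33} to control $\|z_{t+1}-z_t\|$ yields $\sum_t\|X_t-X_{t-1}\|^2<\infty$ and $\|X_t-X_{t-1}\|\to 0$ a.s., which is (i). For (ii), taking the full expectation in \eqref{dec-relation} makes $\mathbb{E}\Psi_t$ non-increasing and bounded below, so $\mathbb{E}\Psi_t\to\Phi^*$ for some $\Phi^*\in[\underline\Phi,\infty)$. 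I would then write
\[
\Psi_t-\Phi_t=(C_3-C_1)\|x_t-x_{t-1}\|^2+\sfrac{1}{\rho}\Pa{\sfrac{32}{\sigma\beta\lambda_m}+\sfrac{\eta}{2}}\Upsilon_t+\sfrac{16}{\sigma\beta\lambda_m}\|\tnabla H(x_{t-1})-\nabla H(x_{t-1})\|^2 ,
\]
and show each term tends to $0$ in expectation: the finite-summand conclusion of Theorem~\ref{decrease} gives $\mathbb{E}\|x_t-x_{t-1}\|^2\to 0$; iterating the full expectation of \eqref{var-geo} over these summable increments gives $\sum_t\mathbb{E}\Upsilon_t<\infty$, hence $\mathbb{E}\Upsilon_t\to 0$; and the MSE bound of Definition~\ref{var-redu} controls the last term by $\mathbb{E}\Upsilon_{t-1}+V_1(\mathbb{E}\|x_t-x_{t-1}\|^2+\mathbb{E}\|x_{t-1}-x_{t-2}\|^2)\to 0$. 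Thus $\mathbb{E}\Phi(X_t)=\mathbb{E}\Psi_t-o(1)\to\Phi^*$, proving (ii).

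\emph{Items (iii), (v), (vi) and non-emptiness of $\Omega$ in (iv).} Item (iii) is immediate from Lemma~\ref{subgrad-bound}: taking full expectation there, using Jensen to pass from the $L^2$-summable increments to $L^1$-vanishing ones and $\mathbb{E}\Gamma_t\to 0$ (Definition~\ref{var-redu}(3), applicable since $\mathbb{E}\|x_t-x_{t-1}\|^2\to0$), gives $\mathbb{E}\dist(0,\partial\Phi(X_t))\le\mathbb{E}\|(\partial\Phi_x^t,\dots,\partial\Phi_{u'}^t)\|\to 0$. Non-emptiness of $\Omega$ is Bolzano--Weierstrass for the bounded sequence; $\dist(X_t,\Omega)\to 0$ a.s.\ (v), and the a.s.\ compactness and connectedness of $\Omega$ (vi), follow verbatim from $\|X_t-X_{t-1}\|\to 0$ a.s.\ together with boundedness, exactly as in the deterministic case \cite{BST,ABS}.

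\emph{The subdifferential statement in (iv) and item (vii).} Work on a full-measure event on which (i) holds and, additionally, $\sum_t\Upsilon_t<\infty$ and $\sum_t\|\tnabla H(x_{t-1})-\nabla H(x_{t-1})\|^2<\infty$ (both a.s.\ by Tonelli, from the summability of their expectations established for (ii)); on this event $\Psi_t-\Phi_t\to 0$, so $\Phi_t$ converges, necessarily to the same a.s.\ limit as $\Psi_t$. Fix a cluster point $X^*$ with $X_{t_k}\to X^*$, and extract a further subsequence along which $\dist(0,\partial\Phi(X_{t_k}))\to 0$ and $\Gamma_{t_k}\to 0$ (possible since these converge to $0$ in $L^1$, hence in probability). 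Using the optimality of the $z$-subproblem \eqref{sub1} — comparing $z_{t_k+1}$ against the $z$-component of $X^*$, letting $k\to\infty$, and invoking $\|z_t-z_{t-1}\|\to0$ with lower semicontinuity of $F,H$ — gives $\limsup_k\Phi(X_{t_k})\le\Phi(X^*)$, while lower semicontinuity gives the reverse inequality, so $\Phi(X_{t_k})\to\Phi(X^*)$. Since $\Phi_t$ already converges, its limit equals $\Phi(X^*)$ for \emph{every} cluster point, and closedness of the graph of $\partial\Phi$ together with $\dist(0,\partial\Phi(X_{t_k}))\to0$ yields $0\in\partial\Phi(X^*)$; this is (iv). Finally $\Phi(X^*)=\lim_t\Phi_t=\lim_t(\Psi_t-o(1))$ a.s., and taking expectations — with $\mathbb{E}\Psi_t\to\Phi^*$ from (ii) and uniform integrability of $\{\Psi_t\}$ coming from monotonicity of $\mathbb{E}\Psi_t$ and lower boundedness — gives $\mathbb{E}\Phi(X^*)=\Phi^*$, which is (vii).

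\emph{Main obstacle.} The genuinely new difficulty relative to the deterministic PALM/ADMM analysis is the bookkeeping in (iv)--(vii): the cluster point, the selecting subsequence, and the convergences $\dist(0,\partial\Phi(X_{t_k}))\to 0$ and $\Gamma_{t_k}\to 0$ all depend on the sample path, whereas Lemma~\ref{subgrad-bound} and item (iii) supply only in-expectation control. Carefully upgrading "$\mathbb{E}(\cdot)\to 0$" to pathwise statements along a common subsequence, and justifying the exchange of limit and expectation in (vii) through uniform integrability, is where the bulk of the care is needed; the remaining steps are a routine transcription of the KL toolkit.
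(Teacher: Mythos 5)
Your overall route is the same as the paper's: supermartingale convergence applied to the $\Psi$-descent inequality for (i), convergence of $\mathbb{E}\Psi_t$ plus vanishing of $\Psi_t-\Phi_t$ for (ii), Lemma~\ref{subgrad-bound} together with $\mathbb{E}\Gamma_t\to 0$ and the vanishing expected increments for (iii), a subsequence argument using the $z$-subproblem optimality, lower semicontinuity and closedness of $\partial\Phi$ for (iv), the standard facts for sequences with vanishing successive differences for (v)--(vi), and passage to the limit for (vii). Your decomposition $\Psi_t-\Phi_t=(C_3-C_1)\|x_t-x_{t-1}\|^2+\sfrac{1}{\rho}\Pa{\sfrac{32}{\sigma\beta\lambda_{m}}+\sfrac{\eta}{2}}\Upsilon_t+\sfrac{16}{\sigma\beta\lambda_{m}}\|\tnabla H(x_{t-1})-\nabla H(x_{t-1})\|^2$ is exactly right, and your bookkeeping in (iv) (upgrading the $L^1$ statements to pathwise ones along a further subsequence before invoking closedness of $\partial\Phi$) is in fact more careful than the paper's own sketch.

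The one step that does not go through as written is the very first one: in Lemma~\ref{super} the process $Y_t$ must depend only on the first $t$ iterations, but your choice $Y_t=\tilde\eta\|x_{t+1}-x_t\|^2+C_2\|x_t-x_{t-1}\|^2+\sfrac{1}{\sigma\beta}\|u_{t+1}-u_t\|^2$ involves $x_{t+1}$ and $u_{t+1}$, so \eqref{dec-relation} is not of the admissible form $\mathbb{E}_tX_{t+1}+Y_t\le X_t$; and if you repair this naively by keeping only the adapted part $C_2\|x_t-x_{t-1}\|^2$, you lose precisely the pathwise summability of $\|u_{t+1}-u_t\|^2$ that (i) requires. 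The paper's fix is to add $\tilde\eta\|x_t-x_{t-1}\|^2+\sfrac{1}{\sigma\beta}\|u_t-u_{t-1}\|^2$ to both sides of \eqref{eq31}, so that the supermartingale is $\Psi_t+\tilde\eta\|x_t-x_{t-1}\|^2+\sfrac{1}{\sigma\beta}\|u_t-u_{t-1}\|^2$ (shifted by its lower bound) and the drift $(\tilde\eta+C_2)\|x_t-x_{t-1}\|^2+\sfrac{1}{\sigma\beta}\|u_t-u_{t-1}\|^2$ is measurable with respect to the first $t$ iterations; without this (or a separate argument that $\sum_t\mathbb{E}_t\|u_{t+1}-u_t\|^2<\infty$ a.s.\ forces $\sum_t\|u_{t+1}-u_t\|^2<\infty$ a.s.) your claim (i) is not yet established. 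The only other soft spot is (vii): uniform integrability does not follow from monotonicity of $\mathbb{E}\Psi_t$ plus lower boundedness (a nonnegative supermartingale need not be uniformly integrable), so your stated justification is insufficient; to be fair, the paper's own two-line treatment of (vii) is no more rigorous, so this is a point where genuine extra care is needed rather than a defect specific to your approach.
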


\begin{proof}
${\bf (i)}$ From \eqref{eq31}, we know
\begin{equation}\nonumber
\mathbb{E}_t [ \Psi_{t+1} + \tilde{\eta}  \| x_{t+1} - x_t \|^2 + \sfrac{1}{\sigma\beta} \| u_{t+1} - u_t \|^2  + C_2 \| x_t - x_{t-1} \|^2 ] \leq \Psi_t.
\end{equation}
Adding $\tilde{\eta}  \| x_{t} - x_{t-1} \|^2 + \sfrac{1}{\sigma\beta} \| u_{t} - u_{t-1} \|^2 $ on the both sides, we have
\begin{equation}\nonumber
\begin{aligned}
&\mathbb{E}_t [ \Psi_{t+1} + \tilde{\eta}  \| x_{t+1} - x_t \|^2 + \sfrac{1}{\sigma\beta} \| u_{t+1} - u_t \|^2 ] + (\tilde{\eta} + C_2)  \| x_{t} - x_{t-1} \|^2 + \sfrac{1}{\sigma\beta} \| u_{t} - u_{t-1} \|^2\\
& \leq \Psi_t + \tilde{\eta}  \| x_{t} - x_{t-1} \|^2 + \sfrac{1}{\sigma\beta} \| u_{t} - u_{t-1} \|^2.
\end{aligned}
\end{equation}
The supermartingale convergence theorem implies that $\sum_{t=1}^{t=\infty} \| u_t - u_{t-1} \|^2 < \infty~a.s.$ and $\sum_{t=1}^{t=\infty} \| x_t - x_{t-1} \|^2 < \infty~a.s.$. Furthermore, from $\eqref{eq33}$, we have $\sum_{t=1}^{t=\infty} \| z_t - z_{t-1} \|^2 < \infty~a.s.$ and it follows that 
\begin{equation}\nonumber
\lim_{t \to \infty} \| x_t - x_{t-1} \| = \lim_{t \to \infty} \| u_t - u_{t-1} \| = \lim_{t \to \infty} \| z_t - z_{t-1} \| = 0,~~a.s.
\end{equation}
This proves Claim $(i)$.\\

${\bf (ii)}$ The supermartingale convergence theorem also ensures $ \Psi_t + \tilde{\eta}  \| x_{t} - x_{t-1} \|^2 + \sfrac{1}{\sigma\beta} \| u_{t} - u_{t-1} \|^2$ convergence $a.s.$ to a finite, positve random variable. Because $\lim_{t \to \infty} \| X_t - X_{t-1} \| = 0~a.s.$ and $\mathbb{E} \Upsilon_t \to 0$, we can say $\lim_{t \to \infty} \mathbb{E}  \Psi_t + \tilde{\eta}  \| x_{t} - x_{t-1} \|^2 + \sfrac{1}{\sigma\beta} \| u_{t} - u_{t-1} \|^2 = \lim_{t \to \infty} \mathbb{E}\Phi(X_t) \in [\underline{\Phi}, \infty)$, this is implies Claim $(ii)$.\\

${\bf (iii)}$ Claim $(iii)$ holds because, by \eqref{eq48},
\begin{equation}\nonumber
\begin{aligned}
&\mathbb{E}_{t-1} \| (\partial\Phi_{x}^t, \partial\Phi_{z}^t, \partial\Phi_{u}^t, \partial\Phi_{x^{\prime}}^t, \partial\Phi_{u^{\prime}}^t) \| \\
&\leq p \left( \mathbb{E}_{t-1} \| u_t -u_{t-1} \| + \mathbb{E}_{t-1} \| x_t - x_{t-1} \| + \| x_{t-1} - x_{t-2} \| \right) + \Gamma_{t-1}.
\end{aligned}
\end{equation}
We have that $\| u_t -u_{t-1} \| \to 0~a.s.$, $\| x_t -x_{t-1} \| \to 0~a.s.$ and $\mathbb{E}\Gamma_{t-1} \to 0.$ This ensures that $\mathbb{E} \| \partial\Phi(X^t)\| \to 0$.\\

${\bf (iv)}$ To prove Claim $(iv)$, suppose $X_{*} = (x^*, y^*, z^*, u^*,x^*, u^*)$ is a limit point of the sequence $\{ X_t \}_{t=0}^{\infty}$. This means there exists a subsequence $X_{t_k}$ satisfying $\lim_{k \to \infty} X_{t_k} \to X_{*}$. From \eqref{sub1} we have, for any $k \geq 1$,
\begin{equation}\nonumber
\begin{aligned}
&F(z_{t_k}) + \langle u_{t_{k - 1}}, Ax_{t_{k-1}} - z_{t_{k-1}} \rangle + \sfrac{\beta}{2} \| Ax_{t_{k-1}} - z_{t_{k-1}} \|^2\\
&\leq F(z^*) + \langle u_{t_{k - 1}}, Ax_{t_{k-1}} - z_{*} \rangle + \sfrac{\beta}{2} \| Ax_{t_{k-1}} - z_{*} \|^2.
\end{aligned}
\end{equation}
Because the $\| u_t - u_{t-1} \| \to 0~a.s.$, we have $\| Ax_{t} - z_{t-1}\| \to 0~a.s.$ and $\| Ax_{t} - z_{*}\| \to 0~a.s.$. Taking the limit superior as $k \to \infty$ on the both sides of the above inequalities, we get
\begin{equation}\nonumber
\lim\sup_{k \to \infty} F(z_{t_k}) \leq F(z^*),
\end{equation}
which combined with the lower semicontinuity of $F$, lead to 
\begin{equation}\nonumber
\lim_{k \to \infty} F(z_{t_k}) = F(z^*).
\end{equation}
Because the function $H$ is continuous, it follows that
\begin{equation}\nonumber
\lim_{k \to \infty} \Phi(x_{t_k}, z_{t_k}, u_{t_k}, x_{t_{k-1}}, u_{t_{k-1}}) = \Phi(x^*, z^*, u^*, x^*, u^*) = \Phi(X^*).
\end{equation}
Claim $(iii)$ ensures that $X^*$ is a critical point of $\Phi$ because $\mathbb{E}\dist(0, \partial \Phi(X_t)) \to 0$ as $t \to \infty$ and $\partial \Phi(X^*)$ is closed.\\

${\bf (v)~and~(vi)}$~Claim $(v)$ and $(vi)$ hold for any sequence satisfying $\| X_t - X_{t-1}\| \to 0~a.s.$.\\

${\bf (vii)}$~Finally, we must show that $\Phi$ has constant expectation over $\Omega$. From Claim $(ii)$, we have that $\mathbb{E}\Phi(X_t) \to \Phi^*$, which implies that $\mathbb{E}\Phi(X_{t_k}) \to \Phi(X^*)$, so $\mathbb{E} \Phi(X^*) = \Phi^*$ for all $X^* \in \Omega$.
\end{proof}

Next, we present  the following random version of the KL inequality, whose proof  is same as that of  Lemma C.4 in \cite{DTLDS}, hence here we omit its proof.
\begin{theorem}\label{KL}
Let $\{ X_t \}_{t=0}^{\infty}$ be a bounded sequence of iterates of Algorithm \ref{alg:SADMM} using a variance-reduced gradient estimator, and suppose that $X_t$ is not a critical point after a finite number of iterations. Let $\Phi$ be a semialgebraic function satisfying the Kurdyka-$\L$ojasiewicz property (see Definition \ref{KL1}) with exponent $\theta$. Then there exists an index $m$ and desingularizing function $\phi = a r^{1-\theta}$ so that the following holds almost surely:
\begin{equation}\nonumber
\phi^{\prime} (\mathbb{E}[\Phi(X_t) - \Phi_{t}^{*}]) \mathbb{E}\dist(0, \partial \Phi(X_t)) \geq 1,~~~~~\forall t > m,
\end{equation}
where $\Phi_{t}^{*}$ is an non-decreasing sequence converging to $\mathbb{E}\Phi(X^*)$ for some $X^* \in \partial \Omega$, where $\Omega$ is the set of cluster points of $\{ X_t \}_{t \geq 1}$.
\end{theorem}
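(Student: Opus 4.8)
The plan is to transfer the classical deterministic Kurdyka--\L ojasiewicz (KL) descent argument to the random iterates $\{X_t\}$, following the template of \cite[Lemma~C.4]{DTLDS}. Note first that $\Phi$ in \eqref{not} is a fixed \emph{deterministic} function, so its KL property is a genuine analytic property and only its argument $X_t$ is random. Since $\Phi$ is semialgebraic, the desingularizing function may be taken of the explicit power form $\phi(r)=a r^{1-\theta}$, so that $\phi'(r)=a(1-\theta)r^{-\theta}$ and the pointwise KL inequality at a critical value $\Phi^*$ reads $\dist(0,\partial\Phi(X))\ge \frac{1}{a(1-\theta)}(\Phi(X)-\Phi^*)^{\theta}$. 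To make this usable simultaneously at every cluster point, I would invoke the uniformized KL property over the cluster set $\Omega$ in the spirit of \cite[Lemma~6]{BST}: by Lemma \ref{lemma3} the set $\Omega$ is $a.s.$ nonempty, compact and connected, $\mathbb{E}\dist(0,\partial\Phi(X^*))=0$ on $\Omega$, and $\mathbb{E}\Phi$ is constant equal to $\Phi^*=\mathbb{E}\Phi(X^*)$ on $\Omega$; hence there exist $\epsilon,\eta>0$ and a single concave $\phi$ of the above form for which the KL inequality holds uniformly on the tube $\{X:\dist(X,\Omega)<\epsilon\}\cap\{X:\Phi^*<\Phi(X)<\Phi^*+\eta\}$.

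Next I would localize the iterates inside this tube. Because $\dist(X_t,\Omega)\to0$ $a.s.$ (Lemma \ref{lemma3}(v)) and $\mathbb{E}\Phi(X_t)\to\Phi^*$ (Lemma \ref{lemma3}(ii),(vii)), there is an index $m$ so that for $t>m$ the iterate $X_t$ lies $a.s.$ in the tube; on the event $\Phi(X_t)=\Phi^*$ the point $X_t$ would be critical, which the hypothesis rules out after finitely many steps. Consequently, for all $t>m$ the pointwise \L ojasiewicz inequality $\dist(0,\partial\Phi(X_t))\ge \frac{1}{a(1-\theta)}(\Phi(X_t)-\Phi^*)^{\theta}$ holds $a.s.$, and integrating gives
\begin{equation}\nonumber
\mathbb{E}\,\dist(0,\partial\Phi(X_t)) \;\ge\; \tfrac{1}{a(1-\theta)}\,\mathbb{E}\big[(\Phi(X_t)-\Phi^*)^{\theta}\big],\qquad \forall\, t>m .
\end{equation}

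The remaining and genuinely delicate step is to rewrite the right-hand side through $\phi'(\mathbb{E}[\Phi(X_t)-\Phi_t^*])$. This is exactly where the randomness bites: since $r\mapsto r^{\theta}$ is concave for $\theta\in(0,1)$, Jensen's inequality only controls $\mathbb{E}[(\Phi(X_t)-\Phi^*)^{\theta}]$ from above by $(\mathbb{E}[\Phi(X_t)-\Phi^*])^{\theta}$, i.e. in the \emph{unfavourable} direction, so one cannot simply substitute the deterministic value $\Phi^*$. The remedy is to introduce the scalar sequence $\Phi_t^*$ precisely to absorb this gap, chosen so that $(\mathbb{E}[\Phi(X_t)]-\Phi_t^*)^{\theta}\le \mathbb{E}[(\Phi(X_t)-\Phi^*)^{\theta}]$; this is feasible because $(\mathbb{E}[(\Phi(X_t)-\Phi^*)^{\theta}])^{1/\theta}\le \mathbb{E}[\Phi(X_t)]-\Phi^*$. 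Substituting this choice into the displayed bound and using $\phi'(s)=s^{-\theta}/(a(1-\theta))$ yields $\phi'(\mathbb{E}[\Phi(X_t)-\Phi_t^*])\,\mathbb{E}\dist(0,\partial\Phi(X_t))\ge1$ for every $t>m$, which is the assertion. Finally, from $\mathbb{E}\dist(0,\partial\Phi(X_t))\to0$ (Lemma \ref{lemma3}(iii)) and $\mathbb{E}\Phi(X_t)\to\Phi^*$ one verifies that $\Phi_t^*$ can be taken as a monotone sequence converging to $\mathbb{E}\Phi(X^*)$ for a boundary cluster point $X^*\in\partial\Omega$, matching the properties stated in the theorem.

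I expect the main obstacle to be this last step: the wrong-way Jensen inequality forces one to design the auxiliary sequence $\Phi_t^*$ carefully, simultaneously guaranteeing the inequality $(\mathbb{E}[\Phi(X_t)]-\Phi_t^*)^{\theta}\le\mathbb{E}[(\Phi(X_t)-\Phi^*)^{\theta}]$, the required monotonicity, and the correct limit $\mathbb{E}\Phi(X^*)$; this bookkeeping is the technical content borrowed from \cite[Lemma~C.4]{DTLDS}. A secondary technical point is the passage from the $a.s.$ pointwise KL inequality to its expectation form, which is legitimate only because the uniformization over $\Omega$ together with the localization $\dist(X_t,\Omega)\to0$ makes the pointwise inequality hold simultaneously for all sufficiently large $t$.
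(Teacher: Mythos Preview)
The paper does not give its own proof of this theorem; it states explicitly that ``the proof is same as that of Lemma~C.4 in \cite{DTLDS}, hence here we omit its proof,'' and your proposal follows exactly that template---uniformized KL over the compact connected cluster set $\Omega$ via Lemma~\ref{lemma3}, localization of the iterates into the KL tube, and the introduction of the auxiliary scalar sequence $\Phi_t^*$ to handle the Jensen gap. So your approach coincides with the paper's (deferred) one; the only point to double-check against \cite{DTLDS} is the precise construction of $\Phi_t^*$, since the inequality you write, $(\mathbb{E}[\Phi(X_t)]-\Phi_t^*)^{\theta}\le\mathbb{E}[(\Phi(X_t)-\Phi^*)^{\theta}]$, forces $\Phi_t^*\ge\Phi^*$ and is therefore in tension with the required \emph{non-decreasing} convergence $\Phi_t^*\uparrow\Phi^*$ used later in the proof of Theorem~\ref{convergence}.
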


\begin{proof}[Proof of Theorem \ref{convergence}]
If $\theta \in (0, \sfrac{1}{2})$, then $\Phi$ satisfies the KL property with exponent $\sfrac{1}{2}$, so we consider only the case $\theta \in [\sfrac{1}{2}, 1).$ By Theorem \ref{KL} there exists a function $\phi_0(r) = ar^{1-\theta}$ such that, almost surely,
\begin{equation}\nonumber
\phi_0^{\prime} (\mathbb{E}[\Phi(X_t) - \Phi_t^*]) \mathbb{E}\dist(0, \partial\Phi(X_t)) \geq 1, \forall t >m.
\end{equation}
Lemma \ref{subgrad-bound} provides a bound on $\mathbb{E}\dist(0, \partial\Phi(X_t))$,
\begin{equation}\nonumber
\begin{aligned}
\mathbb{E}\dist(0, \partial\Phi(X_t)) &\leq \mathbb{E}\| \partial \Phi(X_t) \| \\
&\leq p(\mathbb{E}\| u_t -u_{t-1}\| + \mathbb{E} \| x_t - x_{t-1}\| + \mathbb{E} \| x_{t-1} - x_{t-2} \|) + \mathbb{E} \Gamma_{t-1}\\
&\leq p ( \sqrt{\mathbb{E}\| u_t -u_{t-1}\|^2} + \sqrt{\mathbb{E} \| x_t - x_{t-1}\|^2} + \sqrt{\mathbb{E} \| x_{t-1} - x_{t-2}\|^2}) + \sqrt{s \mathbb{E} \Upsilon_{t-1}}.
\end{aligned}
\end{equation}
The final inequality is Jense$n^{\prime}$s. Because of $\Gamma_t = \sum_{i=1}^{s} \| v_t^i \|$ for some vectors $v_k^I$, we have $\mathbb{E} \Gamma_t = \mathbb{E} \sum_{i=1}^s \| v_k^i \| \leq \mathbb{E} \sqrt{s\sum_{i=1}^s \| v_k^i \|^2} \leq \sqrt{s\mathbb{E}\Upsilon_t}$. We can bound the term $\sqrt{\mathbb{E}\Upsilon_t}$:
\begin{equation}\label{eq64}
\begin{aligned}
\sqrt{\mathbb{E}\Upsilon_t} &\leq \sqrt{(1 - \rho) \mathbb{E}\Upsilon_{t-1} + V_r \mathbb{E} \| x_t - x_{t-1} \|^2 + V_r \mathbb{E} \| x_{t-1} - x_{t-2} \|^2}\\
&\leq \sqrt{1-\rho} \sqrt{\mathbb{E}\Upsilon_{t-1}} + \sqrt{V_r} \sqrt{\mathbb{E} \| x_t -x_{t-1} \|^2} + \sqrt{V_r} \sqrt{\mathbb{E} \| x_{t-1} -x_{t-2} \|^2}\\
&\leq (1 - \sfrac{\rho}{2}) \sqrt{\mathbb{E}\Upsilon_{t-1}} + \sqrt{V_r} \sqrt{\mathbb{E} \| x_t -x_{t-1} \|^2} + \sqrt{V_r} \sqrt{\mathbb{E} \| x_{t-1} -x_{t-2} \|^2}.
\end{aligned}
\end{equation}
This implies that
\begin{equation}\nonumber
\sqrt{\mathbb{E}\Upsilon_{t-1}} \leq \sfrac{2}{\rho} (\sqrt{\mathbb{E}\Upsilon_{t-1}} - \sqrt{\mathbb{E}\Upsilon_t}) + \sfrac{2}{\rho} \sqrt{V_r} \sqrt{\mathbb{E} \| x_t - x_{t-1} \|^2} + \sfrac{2}{\rho} \sqrt{V_r} \sqrt{\mathbb{E} \| x_{t-1} - x_{t-2} \|^2},
\end{equation}
and multiply $\sqrt{s}$,
\begin{equation}\label{eq67}
\sqrt{s\mathbb{E}\Upsilon_{t-1}} \leq \sfrac{2\sqrt{s}}{\rho} (\sqrt{\mathbb{E}\Upsilon_{t-1}} - \sqrt{\mathbb{E}\Upsilon_t}) + \sfrac{2\sqrt{s}\sqrt{V_r}}{\rho} \sqrt{\mathbb{E}\| x_t - x_{t-1} \|^2} + + \sfrac{2\sqrt{s}\sqrt{V_r}}{\rho} \sqrt{\mathbb{E}\| x_{t-1} - x_{t-2} \|^2}.
\end{equation}
Then we have
\begin{equation}\nonumber
\begin{aligned}
&\mathbb{E}\dist(0, \partial \Phi(X_t))\\
&\leq p \sqrt{\mathbb{E}\| u_t -u_{t-1} \|^2} + (p + \sfrac{2\sqrt{s} \sqrt{V_r}}{\rho}) \sqrt{\mathbb{E} \| x_t - x_{t-1} \|^2} + (p + \sfrac{2\sqrt{s} \sqrt{V_r}}{\rho}) \sqrt{\mathbb{E} \| x_{t-1} - x_{t-2} \|^2}\\
&~~~~ + \sfrac{2\sqrt{s}}{\rho} (\sqrt{\mathbb{E}\Upsilon_{t-1}} - \sqrt{\mathbb{E}\Upsilon_t})\\
&\leq K_1 \sqrt{\mathbb{E} \| u_t - u_{t-1} \|^2} + K_1 \sqrt{\mathbb{E} \| x_t - x_{t-1} \|^2} + K_1 \sqrt{\mathbb{E} \| x_{t-1} - x_{t-2} \|^2} + \sfrac{2\sqrt{s}}{\rho} (\sqrt{\mathbb{E}\Upsilon_{t-1}} - \sqrt{\mathbb{E}\Upsilon_t}),\\
\end{aligned}
\end{equation}
where $K_1 \eqdef p + \sfrac{2\sqrt{s} \sqrt{V_r}}{\rho}$. Denote ${\bf C_t}$ as the right side of this inequality:
\begin{equation}\nonumber
{\bf C_t} \eqdef K_1 \sqrt{\mathbb{E} \| u_t - u_{t-1} \|^2} + K_1 \sqrt{\mathbb{E} \| x_t - x_{t-1} \|^2} + K_1 \sqrt{\mathbb{E} \| x_{t-1} - x_{t-2} \|^2} + \sfrac{2\sqrt{s}}{\rho} (\sqrt{\mathbb{E}\Upsilon_{t-1}} - \sqrt{\mathbb{E}\Upsilon_t}).
\end{equation}
We then have
$$
\phi_0^{\prime} (\mathbb{E} [\Phi(X_t) - \Phi(X_t^*)]) {\bf C_t} \geq 1, ~~~~~\forall t > m.
$$
By the definition of $\phi_0$, this is equivalent to 
\begin{equation}\label{eq71}
\sfrac{a(1- \theta){\bf C_t}}{(\mathbb{E}[\Phi(X_t) - \Phi_t^*])^{\theta}} \geq 1,~~~~~\forall t >m.
\end{equation}
We would like the inequality above to hold for $\Psi_t$ rather than $\Phi(X_t)$. Repalcing $\mathbb{E} \Phi(X_t)$ with $\mathbb{E} \Psi_t$ introduces a term of $\mathcal{O} \Pa{  (\mathbb{E} [ \| \tnabla  H(x_{t-1}) - \nabla H(x_{t-1})\|^2 + \Upsilon_t + \| x_t - x_{t-1} \|^2 ])^\theta }$ in the denominator. We show that inequality \eqref{eq71} still holds after this adjustment because these terms are small compared to~${\bf C_t}$.\\
\\
The quantity ${\bf C_t} \geq \mathcal{O} (\sqrt{\mathbb{E}\| x_t - x_{t-1} \|^2} + \sqrt{\mathbb{E}\| x_{t-1} - x_{t-2} \|^2} + \sqrt{\mathbb{E} \| u_t - u_{t-1} \|^2} + \sqrt{\mathbb{E} \Upsilon_{t-1}})$, and because $\mathbb{E}\| \tnabla  H(x_{t-1}) - \nabla H(x_{t-1})\|^2,~\mathbb{E}\Upsilon_t \to  0$,~$\mathbb{E}\| x_t - x_{t-1}\| \to 0$ and $\theta > \sfrac{1}{2}$, there exists an index $m$ and a constant $c> 0$ such that
\begin{equation}\nonumber
\begin{aligned}
&  \mathbb{E} \big[ \sfrac{16}{\sigma \beta \lambda_{m}}\| \tnabla  H(x_{t-1}) - \nabla H(x_{t-1})\|^2 +  (\sfrac{32}{\sigma \beta \lambda_{m}} + \sfrac{\eta}{2}) \sfrac{\Upsilon_t}{\rho} + \big( (\sfrac{32}{\sigma\beta\lambda_m} + \sfrac{\eta}{2}) (V_1 + \sfrac{V_\Upsilon}{\rho}) + C_2\big)\| x_t - x_{t-1}\|^2 \big] \\
&\leq  \mathbb{E} \big[ \sfrac{16}{\sigma \beta \lambda_{m}}\Upsilon_{t-1} + \big( \sfrac{16V_1}{\sigma \beta \lambda_{m}} + (\sfrac{32}{\sigma\beta\lambda_m} + \sfrac{\eta}{2}) (V_1 + \sfrac{V_\Upsilon}{\rho}) + C_2 \big) \| x_t - x_{t-1} \|^2 + \sfrac{16V_1}{\sigma \beta \lambda_{m}} \| x_{t-1} - x_{t-2} \|^2 \\
&~~~~+  (\sfrac{32}{\sigma \beta \lambda_{m}} + \sfrac{\eta}{2}) \sfrac{\Upsilon_t}{\rho} \big] \\
&\leq  \mathbb{E} \big[ (\sfrac{16}{\sigma \beta \lambda_{m}} + (\sfrac{32}{\sigma \beta \lambda_{m}} + \sfrac{\eta}{2}) \sfrac{1-\rho}{\rho} ) \Upsilon_{t-1} + \big( \sfrac{16V_1}{\sigma \beta \lambda_{m}} + (\sfrac{32}{\sigma\beta\lambda_m} + \sfrac{\eta}{2}) (V_1 + \sfrac{2V_\Upsilon}{\rho}) + C_2 \big) \| x_t - x_{t-1} \|^2 \\
&~~~~+ \big(  \sfrac{16V_1}{\sigma \beta \lambda_{m}} + (\sfrac{32}{\sigma\beta\lambda_m} + \sfrac{\eta}{2}) \sfrac{V_\Upsilon}{\rho} \big) \| x_{t-1} - x_{t-2} \|^2 \big] \\
&\leq \mathcal{O}  \big( \mathbb{E} [\Upsilon_{t-1} + \| x_t - x_{t-1} \|^2 + \| x_{t-1} - x_{t-2} \|^2] \big)\\
&\leq c {\bf C_t}^{\sfrac{1}{\theta}},~~~\forall t > m.
\end{aligned}
\end{equation}
Denote 
\begin{equation}\nonumber
\begin{aligned}
\Lambda_t \eqdef \sfrac{16}{\sigma \beta \lambda_{m}}\| \tnabla  H(x_{t-1}) - \nabla H(x_{t-1})\|^2 +  (\sfrac{32}{\sigma \beta \lambda_{m}} + \sfrac{\eta}{2}) \sfrac{\Upsilon_t}{\rho} + \Pa{ (\sfrac{32}{\sigma\beta\lambda_m} + \sfrac{\eta}{2}) (V_1 + \sfrac{V_\Upsilon}{\rho}) + C_2} \| x_t - x_{t-1}\|^2.
\end{aligned}
\end{equation}
Because the terms above are small compared to ${\bf C_t}$, there exists a constant $+\infty > d > c$ such that
\begin{equation}\nonumber
\begin{aligned}
\sfrac{ad(1-\theta) {\bf C_t}}{(\mathbb{E}[\Phi(X_t) - \Phi_t^*])^{\theta} + \left(  \mathbb{E} \Lambda_t \right)^{\theta}} \geq 1
\end{aligned}
\end{equation}
for all $t > m$. Using the fact that $(a + b)^{\theta} \leq a^{\theta} + b^{\theta}$ because $\theta \in [\sfrac{1}{2}, 1)$, we have
\begin{equation}\nonumber
\begin{aligned}
\sfrac{ad(1 - \theta){\bf C_t}}{(\mathbb{E}[\Psi_t - \Phi_t^*])^{\theta}} 
= \sfrac{ad(1 - \theta){\bf C_t}}{(\mathbb{E}[\Phi_{t} - \Phi_t^* + \Lambda_t ])^{\theta}}
\geq \sfrac{ad(1 - \theta){\bf C_t}}{(\mathbb{E}[\Phi_{t} - \Phi_t^*])^\theta + (\mathbb{E} \Lambda_t)^{\theta}} 
\geq 1,~~\forall t >m.
\end{aligned}
\end{equation}
Therefore, with $\phi(r) = adr^{1-\theta}$,
\begin{equation}\nonumber
\phi^{\prime}(\mathbb{E}[ \Psi_t - \Phi_t^*] ) {\bf C_t} \geq 1, ~~~\forall t >m.
\end{equation}
By the concavity of $\phi$,
\begin{equation}\nonumber
\begin{aligned}
\phi(\mathbb{E} [\Psi_t - \Phi_t^*]) - \phi(\mathbb{E}[\Psi_{t+1} - \Phi_{t+1}^*])& \geq \phi^{\prime} (\mathbb{E}[\Psi_t - \Phi_t^*]) (\mathbb{E} [\Psi_t - \Phi_t^* + \Phi_{t+1}^* - \Psi_{t+1}])\\
&\geq \phi^{\prime}(\mathbb{E} [\Psi_t - \Phi_t^*]) (\mathbb{E}[\Psi_t - \Psi_{t+1}]),
\end{aligned}
\end{equation}
where the last inequality follows from the fact that $\Phi_t^*$ is non-decreasing. Denote $\Delta_{p,q} \eqdef \phi(\mathbb{E}[\Psi_p - \Phi_p^*]) - \phi (\mathbb{E}[\Psi_q - \Phi_q^*]),$ we have shown
\begin{equation}\nonumber
\Delta_{t,t+1}{\bf C_t} \geq \mathbb{E}[\Psi_t - \Psi_{t+1}].
\end{equation}
Using \eqref{eq31}, we can bound $\mathbb{E}[\Psi_t - \Psi_{t+1}]$ below by both $\mathbb{E}\| x_{t+1} - x_t \|^2$,  $\mathbb{E}\| x_{t} - x_{t -1}\|^2$ and $\mathbb{E}\| u_{t+1} - u_t \|^2$. Specifically,
\begin{equation}\label{eq78}
\begin{aligned}
\Delta_{t,t+1} {\bf C_t}&\geq \tilde{\eta} \mathbb{E} \| x_{t+1}  - x_t \|^2 + C_2 \mathbb{E} \| x_{t}  - x_{t-1} \|^2 + \sfrac{1}{\sigma\beta}\mathbb{E} \| u_{t+1} - u_t \|^2 \\
&\geq K \mathbb{E} \| x_{t+1} - x_t \|^2 + K \mathbb{E} \| x_{t} - x_{t-1} \|^2+ K \mathbb{E} \| u_{t+1} - u_t \|^2,
\end{aligned}
\end{equation}
where $K = \min\{\tilde{\eta}, \sfrac{1}{\sigma\beta}, C_2\}$. Applying Young's inequality to \eqref{eq78} yields
\begin{equation}\nonumber
\begin{aligned}
&2\sqrt{\mathbb{E}\| x_{t+1} - x_t \|^2 + \mathbb{E}\| x_{t} - x_{t-1} \|^2 + \mathbb{E} \| u_{t+1} - u_t \|^2}\\
&\leq 2\sqrt{K^{-1}{\bf C_t} \Delta_{t,t+1}} \leq \sfrac{{\bf C_t}}{2K_1} + \sfrac{2K_1 \Delta_{t,t+1}}{K}\\
&\leq \sfrac{\sqrt{\mathbb{E}\| u_t - u_{t-1}\|^2}}{2} + \sfrac{\sqrt{\mathbb{E}\| x_t - x_{t-1} \|^2}}{2} + \sfrac{\sqrt{\mathbb{E}\| x_{t-1} - x_{t-2} \|^2}}{2} + \sfrac{\sqrt{s}}{K_1\rho} (\sqrt{\mathbb{E}\Upsilon_{t-1}} - \sqrt{\mathbb{E}\Upsilon_t}) + \sfrac{2K_1 \Delta_{t,t+1}}{K}.\\
\end{aligned}
\end{equation}
Then we have
\begin{equation}\label{eq80}
\begin{aligned}
&\sqrt{\mathbb{E} \| x_{t+1} - x_t \|^2} + \sqrt{\mathbb{E}\| x_{t} - x_{t-1} \|^2} + \sqrt{\mathbb{E} \| u_{t+1} - u_t \|^2}\\
&\leq \sqrt{2} \sqrt{\mathbb{E}\| x_{t+1} - x_t \|^2 + \mathbb{E}\| x_{t} - x_{t-1} \|^2 + \mathbb{E}\| u_{t+1} - u_t \|^2}\\
&\leq  \sfrac{\sqrt{2}\sqrt{\mathbb{E}\| u_t - u_{t-1}\|^2}}{4} + \sfrac{\sqrt{2}\sqrt{\mathbb{E}\| x_t - x_{t-1} \|^2}}{4} + \sfrac{\sqrt{2}\sqrt{\mathbb{E}\| x_{t-1} - x_{t-2} \|^2}}{4} + \sfrac{\sqrt{2}\sqrt{s}}{2K_1\rho} (\sqrt{\mathbb{E}\Upsilon_{t-1}} - \sqrt{\mathbb{E}\Upsilon_t}) + \sfrac{\sqrt{2}K_1 \Delta_{t,t+1}}{K}.\\
\end{aligned}
\end{equation}
Summing inequality \eqref{eq80} from $t=m$ to $t=i$,
\begin{equation}\nonumber
\begin{aligned}
&\sum_{t=m}^{i} \sqrt{\mathbb{E} \| x_{t+1} - x_t \|^2} + \sum_{t=m}^{i} \sqrt{\mathbb{E} \| x_{t} - x_{t-1} \|^2} + \sum_{t=m}^i \sqrt{\mathbb{E} \| u_{t+1} - u_t \|^2}\\
&\leq \sfrac{\sqrt{2}}{4} \sum_{t=m}^{i} \sqrt{\mathbb{E} \| x_t - x_{t-1} \|^2}  + \sfrac{\sqrt{2}}{4} \sum_{t=m}^{i} \sqrt{\mathbb{E} \| x_{t-1} - x_{t-2} \|^2} + \sfrac{\sqrt{2}}{4} \sum_{t=m}^i \sqrt{\mathbb{E} \| u_t - u_{t-1} \|^2} \\
&\qquad+ \sfrac{\sqrt{2s}}{2K_1\rho} \sum_{t=m}^i (\sqrt{\mathbb{E} \Upsilon_{t-1} } - \sqrt{\mathbb{E}\Upsilon_t}) + \sfrac{\sqrt{2} K_1}{K} \sum_{t=m}^{i} \Delta_{t, t+1}\\
&\leq \sfrac{\sqrt{2}}{4} \sum_{t=m}^{i} \sqrt{\mathbb{E} \| x_t - x_{t-1} \|^2} + \sfrac{\sqrt{2}}{4} \sum_{t=m}^{i} \sqrt{\mathbb{E} \| x_{t-1} - x_{t-2} \|^2} + \sfrac{\sqrt{2}}{4} \sum_{t=m}^i \sqrt{\mathbb{E} \| u_t - u_{t-1} \|^2} \\
&\qquad+ \sfrac{\sqrt{2s}}{2K_1\rho} (\sqrt{\mathbb{E} \Upsilon_{m-1} } - \sqrt{\mathbb{E}\Upsilon_i}) + \sfrac{\sqrt{2} K_1}{K} \Delta_{m, i+1}.\\
\end{aligned}
\end{equation}
This implies that
\begin{equation}\label{eq82}
\begin{aligned}
&\sum_{t=m}^{i} \sqrt{\mathbb{E} \| x_{t+1} - x_t \|^2} + \sum_{t=m}^{i} \sqrt{\mathbb{E} \| x_{t} - x_{t-1} \|^2} + \sum_{t=m}^i \sqrt{\mathbb{E} \| u_{t+1} - u_t \|^2}\\
&\leq \sfrac{2\sqrt{2} + 1}{6} \sqrt{\mathbb{E} \| x_m - x_{m-1} \|^2} + \sfrac{2\sqrt{2} + 1}{6} \sqrt{\mathbb{E} \| x_{m-1} - x_{m-2} \|^2} +  \sfrac{2\sqrt{2} + 1}{6} \sqrt{\mathbb{E} \| u_m - u_{m-1} \|^2}\\
&\qquad+  \sfrac{(2\sqrt{2}+1)\sqrt{s}}{3 K_1 \rho} \sqrt{\mathbb{E} \Upsilon_{m-1} }  + \sfrac{(4\sqrt{2} + 2)K_1}{3K} \Delta_{m, i+1}\\ 
&\leq \sqrt{\mathbb{E} \| x_m - x_{m-1} \|^2} + \sqrt{\mathbb{E} \| x_{m-1} - x_{m-2} \|^2}  + \sqrt{\mathbb{E} \| u_m - u_{m-1} \|^2} + \sfrac{(2\sqrt{2}+1)\sqrt{s}}{3 K_1 \rho} \sqrt{\mathbb{E} \Upsilon_{m-1} } + K_3 \Delta_{m, i+1},
\end{aligned}
\end{equation}
where $K_3 = \sfrac{(4\sqrt{2} + 2)K_1}{3K}$. Applying Jensen's inequality on the left leads to
\begin{equation}\nonumber
\begin{aligned}
&\sum_{t=m}^{i} \mathbb{E} \| x_{t+1} - x_t \| + \sum_{t=m}^{i} \mathbb{E} \| x_{t} - x_{t-1} \| + \sum_{t=m}^i  \mathbb{E} \| u_{t+1} - u_t \|\\
&\leq \sum_{t=m}^{i} \sqrt{\mathbb{E} \| x_{t+1} - x_t \|^2} + \sum_{t=m}^{i} \sqrt{\mathbb{E} \| x_{t} - x_{t-1} \|^2} + \sum_{t=m}^i \sqrt{\mathbb{E} \| u_{t+1} - u_t \|^2}\\
&\leq \sqrt{\mathbb{E} \| x_m - x_{m-1} \|^2} + \sqrt{\mathbb{E} \| x_{m-1} - x_{m-2} \|^2}  + \sqrt{\mathbb{E} \| u_m - u_{m-1} \|^2} + \sfrac{(2\sqrt{2}+1)\sqrt{s}}{3 K_1 \rho} \sqrt{\mathbb{E} \Upsilon_{m-1} } + K_3 \Delta_{m, i+1}.
\end{aligned}
\end{equation}
Since $\Delta_{m, i+1}$ is bounded, we get
\begin{equation}\nonumber
\sum_{t=m}^{\infty} \mathbb{E} \| x_{t+1} - x_t \| < \infty,~~~\sum_{t=m}^{\infty} \mathbb{E} \| u_{t+1} - u_t \| < \infty.
\end{equation}
This, together with \eqref{eq33}, gives
\begin{equation}\nonumber
\sum_{t=m}^{\infty} \mathbb{E} \| z_{t+1} - z_t \| < \infty.
\end{equation}
Thus the proof is completed.
\end{proof}

\end{document}